\documentclass[a4paper, 11pt, twoside]{article}

\usepackage{amsmath, amscd, amsfonts, amssymb, amsthm, latexsym, url, color, framed}

\usepackage{todonotes}
\setlength{\marginparwidth}{1.1in}

\usepackage{graphicx}
\usepackage[left=1.3in, right=1.2in, top=1.3in, bottom=0.9in, includefoot, headheight=13.6pt]{geometry}

\input{xy}
\xyoption{all}

\usepackage[ps2pdf=true,colorlinks,breaklinks=true]{hyperref}
\usepackage[figure,table]{hypcap}
\hypersetup{
	bookmarksnumbered,
	pdfstartview={FitH},
	citecolor={blue},
	linkcolor={red},
	urlcolor={black},
	pdfpagemode={UseOutlines}
}
\makeatletter
\newcommand\org@hypertarget{}
\let\org@hypertarget\hypertarget
\renewcommand\hypertarget[2]{%
  \Hy@raisedlink{\org@hypertarget{#1}{}}#2%
} 
\makeatother

\newtheorem{theorem}{Theorem}[section]
\newtheorem{lemma}[theorem]{Lemma}
\newtheorem{corollary}[theorem]{Corollary}
\newtheorem{proposition}[theorem]{Proposition}

\theoremstyle{definition}
\newtheorem{definition}[theorem]{Definition}
\newtheorem{remark}[theorem]{Remark}
\newtheorem{example}[theorem]{Example}

\newcommand{\xysquare}[8]{
\[\xymatrix{
#1 \ar@{#5}[r] \ar@{#6}[d] & #2 \ar@{#7}[d]\\
#3 \ar@{#8}[r] & #4
}\]
}

\DeclareMathOperator*{\holim}{\operatorname*{holim}}

\newcommand{\al}{\alpha}
\newcommand{\bb}{\mathbb}

\newcommand{\blob}{\bullet}

\newcommand{\comment}[1]{}

\newcommand{\comp}{{\hat{\phantom{o}}}}

\newcommand{\into}{\hookrightarrow}
\newcommand{\isoto}{\stackrel{\simeq}{\to}}
\newcommand{\Isoto}{\stackrel{\simeq}{\longrightarrow}}

\newcommand{\op}{\operatorname}
\newcommand{\pid}[1]{\langle #1 \rangle}
\newcommand{\quis}{\stackrel{\sim}{\to}}
\newcommand{\res}{\overline}
\newcommand{\roi}{\mathcal{O}}

\newcommand{\sub}[1]{{\mbox{\scriptsize #1}}}

\newcommand{\To}{\longrightarrow}

\newcommand{\xto}{\xrightarrow}

\newcommand{\THH}{T\!H\!H}
\newcommand{\HH}{H\!H}
\newcommand{\HC}{H\!C}
\newcommand{\TR}{T\!R}
\newcommand{\TC}{T\!C}

\renewcommand{\cal}{\mathcal}
\renewcommand{\hat}{\widehat}
\renewcommand{\frak}{\mathfrak}
\newcommand{\indlim}{\varinjlim}
\renewcommand{\tilde}{\widetilde}

\renewcommand{\projlim}{\varprojlim}

\DeclareMathOperator{\Hom}{Hom}

\DeclareMathOperator{\Spec}{Spec}

\DeclareMathOperator{\Tor}{Tor}

\newcommand{\overbar}[1]{\mkern 1.5mu\overline{\mkern-2.5mu#1\mkern-1.5mu}\mkern 1.5mu}

\usepackage{fancyhdr}

\pagestyle{fancy}
\fancyhead{}
\fancyfoot[EC, OC]{\thepage}
\fancyhead[EL, OR]{}
\fancyhead[EC]{\sc Matthew Morrow}
\fancyhead[OC]{\sc Pro cdh-descent for cyclic homology and $K$-theory}

\usepackage{sectsty}
\sectionfont{\Large\sc\centering}
\chapterfont{\large\sc\centering}
\chaptertitlefont{\LARGE\centering}
\partfont{\centering}

\begin{document}
\title{Pro cdh-descent for cyclic homology and $K$-theory}
\author{Matthew Morrow}
\comment{
\footnote{University of Chicago, 5734 S.~University Ave., Chicago, IL, 60637, USA
\quad 312-810-9617 \newline {\tt mmorrow@math.uchicago.edu}\quad
\url{http://math.uchicago.edu/\~mmorrow/}\newline  Supported by a Simons Postdoctoral Fellowship.}
}
\date{}

\maketitle

\begin{abstract}
In this paper we prove that cyclic homology, topological cyclic homology, and algebraic $K$-theory satisfy a pro Mayer--Vietoris property with respect to abstract blow-up squares of varieties, in both zero and finite characteristic. This may be interpreted as the well-definedness of $K$-theory with compact support.
\end{abstract}

\thispagestyle{empty}
\tableofcontents

\setcounter{section}{-1}
\section{Introduction}

The primary goal of this paper is to prove that algebraic $K$-theory satisfies a pro Mayer--Vietoris property with respect to abstract blow-up squares of varieties, in both zero and finite characteristic, at least assuming strong resolution of singularities. This may also be interpreted as the well-definedness of a theory of $K$-groups with compact support.

To state the main result, we first recall that an {\em abstract blow-up square} of schemes is a pull-back diagram \xysquare{Y'}{X'}{Y}{X}{->}{->}{->}{->} where $X'\to X$ is proper, $Y\to X$ is a closed embedding, and the induced map on the open subschemes $X'\setminus Y'\to X\setminus Y$ is an isomorphism. We denote by $rY$ the $r^\sub{th}$ infinitesimal thickening of $Y$ inside $X$, i.e., $rY:=\Spec\roi_X/\cal I^r$ where $\cal I\subseteq\roi_X$ is the sheaf of ideals defining $Y$, and similarly for other closed embeddings.

Our pro descent theorem is the following:

\begin{theorem}[Pro cdh-descent for $K$-theory; see Thm.~\ref{theorem_pro_descent_for_K_theory}]\label{theorem6}
Let $k$ be an infinite, perfect field which has strong resolution of singularities. For any abstract blow-up square, as above, consisting of finite type $k$-schemes, the resulting square of pro spectra \xysquare{K(X)}{K(X')}{\{K(rY)\}_r}{\{K(rY')\}_r}{->}{->}{->}{->} is homotopy cartesian. In other words, the canonical maps of pro abelian relative $K$-groups \[\{K_n(X,rY)\}_r\To\{K_n(X',rY')\}_r\] is an isomorphism for all $n\in\bb Z$.
\end{theorem}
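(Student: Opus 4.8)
The plan is to reduce the general statement to the affine case by Zariski (or even Nisnevich) descent, and then to attack the affine case via a d\'evissage on the structure of the blow-up, using resolution of singularities to build a tower of intermediate abstract blow-up squares in which either the base or the total space becomes smoother.  More precisely, after noting that both $K(-)$ and the pro system $\{K(r\,\cdot)\}_r$ satisfy Zariski descent and that a square of pro spectra is homotopy cartesian if and only if it is so locally, one may assume $X=\Spec A$ is affine.  By a standard limit argument one may further assume everything is of finite type over $k$.  The heart of the matter is then to compare the relative pro $K$-groups $\{K_n(X,rY)\}_r$ and $\{K_n(X',rY')\}_r$; since $X'\setminus Y'\isoto X\setminus Y$, both sides measure the failure of $K$-theory to be insensitive to the thickening, and the claim is that this failure is detected the same way upstairs and downstairs.

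**Passing from $K$-theory to cyclic homology.**  The key technical input — which I expect to be developed earlier in the paper — is the pro Goodwillie-type theorem: the relative term fits into a pro fibre sequence relating $K$-theory and (topological) cyclic homology, so that $\{K(X,rY)\}_r \to \{\TC(X,rY)\}_r$ (or, rationally, $\{\HC(X,rY)\}_r$ up to a shift) is a pro equivalence after suitable completion, with an analogous statement for $X'$.  Thus it suffices to prove the pro cdh-descent statement for $\THH$, $\TC$, and cyclic homology.  These are tractable because $\THH$ and $\HH$ are built from tensor powers of the structure sheaf, so one has good control: the relevant $\HH$- and $\THH$-groups of a thickening $rY$ are governed by differentials and truncated polynomial/de Rham data, and the continuity statement $\{\HH(rY)\}_r \simeq \HH(\hat{X}_Y)$ (completion along $Y$) lets one replace the pro systems by honest completions.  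One then proves cdh-descent for the completed theories, where formal GAGA / formal functions and the proper base change available for coherent cohomology make the comparison $X' \leftrightarrow X$ feasible.

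**The main obstacle.**  The hard part will be the positive-characteristic case: rationally one can work entirely with $\HC$ and de Rham–Witt–type complexes and push through a comparison using Grothendieck's formal functions theorem, but integrally one must control $\TC$, hence $\TR$ and the topological Hochschild homology of non-reduced, singular schemes, and here the pro/continuity statements are genuinely delicate.  The plan is to isolate a pro version of the statement ``$\THH$ commutes with the relevant limits'' and combine it with the cdh-descent that $\THH$ inherits from being a finitary localizing invariant on the nose on the open part, leveraging that the obstruction is supported on $Y$.  Concretely, one runs the argument by induction on $\dim X$: strong resolution gives a proper birational $X'\to X$ that is an isomorphism away from a closed $Y$ of strictly smaller dimension; the inductive hypothesis applies to $Y' \to Y$ and to all further blow-ups, and a spectral sequence / successive-approximation argument then bootstraps descent from the lower-dimensional strata to $X$.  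Gluing the characteristic-zero ($\HC$) and the $p$-adic ($\TC$) halves back together via the arithmetic fracture square, together with the pro Goodwillie theorem, then yields the stated isomorphism of pro $K$-groups for all $n\in\bb Z$.
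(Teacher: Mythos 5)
Your high-level strategy---trade $K$-theory for cyclic/topological cyclic homology via trace methods, then prove pro descent for the latter using formal-functions arguments---is indeed the shape of the paper's proof, and your remarks about formal GAGA and continuity for $\HH$ point at the right technical core (the paper's formal function theorems for Andr\'e--Quillen and Hochschild homology, and their $\THH$ analogues from the joint work with Dundas). However, your bridging step contains a genuine gap. You assert that ``the relative term fits into a pro fibre sequence relating $K$-theory and (topological) cyclic homology, so that $\{K(X,rY)\}_r\to\{\TC(X,rY)\}_r$ \dots is a pro equivalence.'' The relative term $K(X,rY)$ here is relative to a \emph{closed embedding}, not to a nilpotent extension: the ideal sheaf $\cal I^r$ cutting out $rY$ is not nilpotent, so neither Goodwillie's theorem nor McCarthy/Dundas--Goodwillie--McCarthy applies for any fixed $r$. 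The pro-statement you want is itself a deep theorem (the pro-DGM theorem of Geisser--Hesselholt, valid only with $\bb Z/p^v$-coefficients, and a rational pro-Goodwillie analogue); it is neither proved in your sketch nor the route the paper takes, and with finite coefficients you would still owe an argument for the prime-to-$p$ part in characteristic $p$, which ``arithmetic fracture'' does not supply for free. The paper instead uses the cdh-descent of the \emph{infinitesimal} $K$-theory $K^{\sub{inf}}=\op{hofib}(K\to H\!N)$ (Corti\~nas--Haesemeyer--Schlichting--Weibel in characteristic zero, extended to quasi-excellent $\bb Q$-schemes via the Haesemeyer argument of Proposition \ref{proposition_Haesemeyer}) resp.\ $\op{hofib}(K\to\TC^m)$ (Geisser--Hesselholt's Theorem B in characteristic $p$): this is applied to the square $(X,X',rY,rY')$ for each \emph{fixed} $r$, needs Goodwillie/McCarthy only for honestly nilpotent extensions, and is exactly where resolution of singularities enters. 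One then runs the five lemma against the pro descent results for $\HC$ resp.\ $\TC^m$ (Theorems \ref{theorem_pro_cdh_for_HH} and \ref{theorem_pro_cdh_for_THH}).

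A secondary problem is your proposed induction on $\dim X$ through towers of resolutions. The theorem concerns an \emph{arbitrary} abstract blow-up square, and the paper's pro descent for $\HH$, $\HC$, $\THH$, $\TC$ is proved directly for such squares by formal-functions arguments, with no induction and no resolution of singularities. An induction that tries to factor a general abstract blow-up into regularly embedded blow-ups and lower-dimensional pieces runs into the obstruction the paper flags explicitly: the Haesemeyer-type bootstrapping machine cannot be applied to the pro-systems $\{K(rY)\}_r$, because these depend on the embedding $Y\into X$ and not merely on the scheme $Y$, so one cannot assemble the known special cases (regular blow-ups, finite morphisms) into the general statement this way. If you want to pursue your route, the honest prerequisites are: (a) the pro-DGM/pro-Goodwillie theorems for a non-nilpotent ideal and its powers, globalised by Zariski descent, together with control of the prime-to-$p$ part; and (b) the unconditional pro descent for $\HC$/$\TC$ as in the paper. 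With (a) in hand your argument would in fact be stronger than the paper's in characteristic $p$ (it would avoid resolution of singularities, as in later work of Kerz--Strunk--Tamme), but as written it is not yet a proof.
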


The main applications of Theorem \ref{theorem6} are to zero cycles; since the Theorem may be accepted as a block box for these applications, they are presented separately in an accompanying paper \cite{Morrow_zero_cycles}. There we solve cases of an outstanding conjecture of V.~Srinivas and S.~Bloch \cite[pg.~6]{Srinivas1985a} concerning the Levine--Weibel Chow group of zero cycles on singular varieties, and relate Chow groups with modulus, which play a prominent role in M.~Kerz and S.~Saito's higher dimensional class field theory \cite{KerzSaito2013}, to both Levine--Weibel Chow groups and $K$-theory via cycle class maps.

Theorem \ref{theorem6} may be interpreted as a definition of {\em $K$-theory with compact support}, as follows. Given a separated $k$-variety $X$, we may choose a proper compactification $\overbar X$, set $Y:=\overbar X\setminus X$, and then define $K^c(X):=\op{holim}_rK(\overbar X,rY)$. Theorem \ref{theorem6} implies that this definition does not depend on the chosen compactification $\overbar X$ of $X$, and we prove in Proposition \ref{proposition_compact2} that there is a a pushforward localisation sequence relating $K^c(U)$ and $K^c(X)$ for any open subvariety $U\subseteq X$, thereby justifying the nomenclature of $K^c$ as a theory with compact support.

In characteristic zero we actually establish Theorem \ref{theorem6} in much greater generality, namely for any abstract blow-up square of Noetherian, quasi-excellent, $\bb Q$-schemes of finite Krull dimension. This is obtained by first extending Haesemeyer's argument \cite[\S5--6]{Haesemeyer2004}, for checking whether a pre-sheaf of spectra satisfies cdh-descent, to this generality; see Proposition \ref{proposition_Haesemeyer}. This also allows us to prove the vanishing part of Weibel's $K$-dimension conjecture for such schemes; see Theorem \ref{theorem_K_dim}.

To contextualise Theorem \ref{theorem6} we should make a few comments about cdh-descent. Algebraic $K$-theory (as well as Andr\'e--Quillen, Hochschild, topological Hochschild, cyclic, and topological cyclic homologies) does not satisfy descent in V.~Voevodky's cdh-topology \cite{Voevodsky2010}, in contrast to, e.g., Weibel's homotopy invariant $K$-theory \cite{Haesemeyer2004}, or periodic cyclic homology \cite{Cortinas2008}. Since it is known that $K$-theory does satisfy Nisnevich descent, this precisely means that taking the $K$-theory of an abstract blow-up square does not necessarily yield a homotopy a cartesian square of spectra. In fact, the failure of $K$-theory to satisfy cdh-descent in characteristic zero (resp.~$p>0$) is precisely equal to the analogous obstruction in cyclic homology \cite{Cortinas2006, Cortinas2008, Haesemeyer2004} (resp.~topological cyclic homology \cite{GeisserHesselholt2010}); this has led to new calculations of the $K$-theory of singular algebraic varieties, especially by G.~Corti{\~n}as, C.~Haesemeyer, M.~Schlichting, M.~Walker, and C.~Weibel \cite{Cortinas2008, Cortinas2010, Cortinas2008a, Cortinas2009, Cortinas2013} in characteristic zero. The moral of Theorem \ref{theorem6} is that the failure of $K$-theory to satisfy cdh-descent can be remedied by taking all infinitesimal thickenings of the exceptional subschemes $Y$ and $Y'$ into account.

We now briefly discuss how Theorem \ref{theorem6} is proved and simultaneously state our other main results. Using the aforementioned cdh comparison between $K$-theory and cyclic homology, Theorem \ref{theorem6} in characteristic zero may be reduced to an analogous pro Mayer--Vietoris assertion about cyclic homology, or even Hochschild or Andr\'e--Quillen homology. We establish this pro Mayer--Vietoris property in considerable generality (see Theorem \ref{theorem_pro_cdh_for_AQ} for the Andr\'e--Quillen case):

\begin{theorem}[Pro descent for $\HH$ and $HC$ wrt.~abstract blow-ups; see Thm.~\ref{theorem_pro_cdh_for_HH}]\label{theorem2}
Let $k$ be a Noetherian ring, and let
\[\xymatrix{
Y'\ar[d]\ar[r] & X'\ar[d]\\
Y\ar[r] & X
}\]
be an abstract blow-up square of Noetherian, finite Krull dimensional $k$-schemes. Then the canonical maps \[\{\HH_n^k(X,rY)\}_r\To \{\HH_n^k(X',rY')\}_r,\quad\quad \{\HC_n^k(X,rY)\}_r\To \{\HC_n^k(X',rY')\}_r\] are isomorphisms of pro abelian groups for all $n\in\bb Z$.
\end{theorem}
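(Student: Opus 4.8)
The plan is to reduce the assertion, via Zariski descent together with a d\'evissage over the proper morphism $X'\to X$, to a pro-excision statement in commutative algebra, and then to settle the latter by means of the cotangent complex and an Artin--Rees argument. Both $\HH^k(-)$ and $\HC^k(-)$ are defined by Zariski hyperdescent from the affine case, so that for any closed immersion $rZ\into W$ the relative groups $\HH^k_n(W,rZ)$ and $\HC^k_n(W,rZ)$ fit, for each $r$ and compatibly in $r$, into Mayer--Vietoris long exact sequences attached to a finite Zariski cover of $W$; since $\mathrm{Pro}(\mathrm{Ab})$ is abelian and a levelwise exact sequence of towers is exact as a sequence of pro abelian groups, the five lemma shows that the property of being a pro isomorphism is Zariski local, and by quasi-compactness we may take $X=\Spec A$ affine, $Y=\Spec A/I$. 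To deal with the proper morphism $X'\to X$ one invokes Chow's lemma and the flattening theorem of Raynaud--Gruson, together with an induction --- here the finiteness of the Krull dimension enters --- comparing the various abstract blow-up squares that arise, so as to reduce to the case in which $X'$ is a blow-up of $\Spec A$ along a closed subscheme supported on $Y$. Covering $X'$ by its standard affine charts and applying Mayer--Vietoris and the five lemma in $\mathrm{Pro}(\mathrm{Ab})$ once more, the theorem becomes the following purely affine statement: for the resulting maps $A\to A'$ of Noetherian $k$-algebras and ideals $I\subseteq A$, $I'\subseteq A'$, the canonical maps $\{\HH^k_n(A,I^r)\}_r\to\{\HH^k_n(A',(I')^r)\}_r$, together with their $\HC$ analogues, are isomorphisms of pro abelian groups.

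For this affine core the strategy is to pass from Hochschild and cyclic homology to the cotangent complex. Both $\HH^k(-)$ and $\HC^k(-)$ carry natural complete exhaustive filtrations whose associated graded pieces are shifts of the derived exterior powers $\bigwedge^i_{(-)}\bb L_{(-)/k}$; after passing to the pro-system over $r$, where these filtrations converge uniformly, it suffices to prove the corresponding statement for each $\bigwedge^i\bb L_{(-)/k}$, that is, for Andr\'e--Quillen homology, which is Theorem~\ref{theorem_pro_cdh_for_AQ}. (Alternatively one may run the same Artin--Rees argument directly on the cyclic bar complexes.) The proof of the Andr\'e--Quillen case itself proceeds by the same reductions, now feeding on the known pro-excision property of Andr\'e--Quillen homology and on an explicit description of $\bb L_{X'/k}$ when $X'$ is a blow-up. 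On either route the analytic heart is a pro-Artin--Rees statement: for the finitely generated $A$-modules $M$ occurring as homology of $\bb L_{A/k}$ --- finitely generated because $A$ is Noetherian, so $\bb L_{A/k}$ is pseudo-coherent --- the towers $\{\Tor^A_*(A/I^r,M)\}_r$ are pro-zero, and the ideal relating $A$ and $A'$ provides a single Artin--Rees constant forcing the $A$-tower and the $A'$-tower to be pro-isomorphic.

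The step I expect to be the main obstacle is precisely this last uniformity. For each fixed $r$ the groups $\HH^k_n(X,rY)$ and $\HH^k_n(X',rY')$ genuinely differ, and one must produce bounds, independent of $r$, exhibiting the two towers as isomorphic pro-objects; this is where the Noetherian hypothesis (Artin--Rees, pseudo-coherence of $\bb L$) and the finiteness of the Krull dimension (uniform convergence of the Hodge filtration, and termination of the d\'evissage) are both indispensable, and where the combinatorics of the affine blow-up charts must be controlled. Once the affine statement is in hand, feeding it back through the d\'evissage of the first paragraph yields the theorem for an arbitrary abstract blow-up square. Finally, the $\HC$ statement follows from the $\HH$ statement: Connes' $SBI$ sequences assemble, compatibly in $r$, into exact sequences of pro abelian groups $\cdots\to\{\HH^k_n(X,rY)\}_r\to\{\HC^k_n(X,rY)\}_r\to\{\HC^k_{n-2}(X,rY)\}_r\to\cdots$, and likewise for $X'$; since relative cyclic homology of a finite-dimensional Noetherian scheme vanishes in sufficiently negative degrees --- uniformly in $r$, as $rY$ is supported on $Y$ --- a downward induction on $n$, using the five lemma in $\mathrm{Pro}(\mathrm{Ab})$, upgrades the $\HH$ isomorphism to the $\HC$ isomorphism.
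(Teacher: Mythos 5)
Your overall architecture (localise, reduce to Andr\'e--Quillen homology via the Hodge-type filtration/spectral sequence, Artin--Rees for the uniformity in $r$, then $SBI$ plus downward induction for $\HC$) matches the paper in several places, but there is a genuine gap at the centre of your d\'evissage: the step ``covering $X'$ by its standard affine charts and applying Mayer--Vietoris\dots the theorem becomes the purely affine statement $\{\HH^k_n(A,I^r)\}_r\to\{\HH^k_n(A',(I')^r)\}_r$'' is not a valid reduction, and the affine statement you reduce to is false. Localisation is only available on $X$, not on $X'$: for a Zariski cover $\{U_i\}$ of $X'$ the target $\{\HH_n^k(X',rY')\}_r$ decomposes by Mayer--Vietoris, but the source $\{\HH_n^k(A,I^r)\}_r$ does not, so the five lemma gives you nothing; and for a single chart the map $U_i\setminus V(I\roi_{X'})\to\Spec A\setminus V(I)$ is only an open immersion, not an isomorphism. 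Concretely, already for $n=0$ and the blow-up of the origin in $\bb A^2_k$ with chart $A'=k[x,y/x]$, $I=(x,y)$, $I'=xA'$, the map $\{I^r\}_r\to\{(I')^r\}_r$ is not a pro-isomorphism: $x^s(y/x)^N=y^Nx^{s-N}$ lies in $(I')^s$ but is not in the image of $I^s$ for $N>s$, so no Artin--Rees constant exists. The properness of $X'\to X$ is exactly the input that rescues the global statement, and it is irreducibly global on $X'$: the paper's Lemma~\ref{lemma_AQ_of_birational} (and its Hochschild analogue, Theorem~\ref{theorem_formal_function_for_HH}(vi)--(vii)) proves $\{D_n^i(A|k,I^r)\}_r\isoto\{D_n^i(X'|k,I^r\roi_{X'})\}_r$ for the whole proper $X'$ over $\Spec A$, using Grothendieck's formal functions theorem in its pro form, the finiteness of $H^p(X',\cal F)$ as $A$-modules, the higher Jacobi--Zariski spectral sequence, and the identification $\{I^r\}_r\isoto\{I^rH^0(X',\roi_{X'})\}_r$ coming from $H^0(X',\roi_{X'})$ being a finite $A$-algebra agreeing with $A$ away from $V(I)$. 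None of these survive passage to an affine chart.

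Two smaller points. The Chow's lemma/Raynaud--Gruson reduction to actual blow-ups is unnecessary (the paper treats an arbitrary proper morphism that is an isomorphism away from $V(I)$ directly), and it is also delicate here because the pro-systems depend on the chosen infinitesimal thickenings, not just on the underlying squares, so comparing the squares produced by flattening requires extra care. On the other hand, your reduction of $X$ to the affine case, the passage from $\HH$ to Andr\'e--Quillen homology, and the $SBI$/five-lemma induction for $\HC$ (starting from the vanishing in degrees below $-\max\{\dim X,\dim X'\}$) are all sound and agree with Corollary~\ref{corollary_AQ_of_birational} and Theorem~\ref{theorem_pro_cdh_for_HH} of the paper. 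What is missing is the proof of the global statement over the proper $X'$, which is where the formal functions theorem must enter.
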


We should mention at this point that our Hochschild and cyclic homology groups are always computed in the derived sense, that is after replacing rings by free simplicial resolutions (see Section \ref{subsection_HH_and_HC_of_schemes} for details).

The key input to proving Theorem \ref{theorem2} is new formal function theorems for Andr\'e--Quillen, Hochschild, and cyclic homology, in the style of Grothendieck's formal functions theorem for coherent cohomology. In particular, in the case of Hochschild homology we prove the following (see Corollary \ref{corollary_traditional_formal_functions_for_AQ} for the Andr\'e--Quillen case):

\begin{theorem}[Formal functions theorem for $\HH$; see Thm.~\ref{theorem_formal_function_for_HH}(v)]\label{theorem_intro_HH}
Let $A$ be a Noetherian ring, $I\subseteq A$ an ideal, and $X$ a proper scheme over $A$ of finite Krull dimension. Then the canonical map \[\{\HH_n^A(X)\otimes_AA/I^r\}_r\To \{\HH_n^{A/I^r}(X\times_AA/I^r)\}_r\] is an isomorphism of pro $A$-modules for all $n\in\bb Z$.
\end{theorem}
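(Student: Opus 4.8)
The plan is to reduce the statement to the analogous classical formal functions theorem for coherent cohomology by filtering Hochschild homology through its coherent pieces, using the fact that the derived Hochschild complex of a scheme over a base is, étale-locally (or Zariski-locally, since Hochschild homology of affines is computed by the Hochschild complex of a free simplicial resolution), built out of quasi-coherent sheaves whose cohomology we control. More precisely, for a scheme $X$ proper over $A$, the Hochschild homology $\HH_n^A(X)$ can be computed via hypercohomology of a complex $\mathbb{L}\HH^A(\roi_X)$ of quasi-coherent $\roi_X$-modules on $X$ (the sheafified derived Hochschild complex). There is a spectral sequence $H^{-p}(X, \mathcal{H}_q) \Rightarrow \HH_{p+q}^A(X)$, where each $\mathcal{H}_q$ is a coherent $\roi_X$-module (finiteness coming from $X$ being of finite type and finite Krull dimension over Noetherian $A$, together with the finite generation of the modules of Kähler differentials / the terms of the cotangent complex in each degree). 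The heart of the argument is that this entire package is compatible with base change along $A \to A/I^r$.

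First I would set up the sheaf-level derived Hochschild complex and establish a base-change identity: $\mathbb{L}\HH^A(\roi_X) \otimes^{\mathbb{L}}_A A/I^r \simeq \mathbb{L}\HH^{A/I^r}(\roi_{X_r})$ where $X_r = X \times_A A/I^r$, at least after passing to pro-systems in $r$. The pro-qualifier is essential here: the derived tensor product introduces $\Tor$ terms, but these assemble into pro-systems that one shows are pro-zero using the Artin–Rees lemma, exactly as in the pro version of the classical formal functions theorem. This is really a statement about the cotangent complex: $L_{X/A} \otimes^{\mathbb{L}}_A A/I^r$ agrees pro-isomorphically with $L_{X_r / (A/I^r)}$, which should already appear (or be provable by the same method) as part of the Andr\'e–Quillen input referenced in the excerpt (Corollary~\ref{corollary_traditional_formal_functions_for_AQ}); Hochschild homology is then obtained from the cotangent complex data by the standard (Hochschild–Kostant–Rosenberg–type, or rather its derived/simplicial incarnation) filtration, which is functorial and base-change compatible.

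Next I would feed this into coherent cohomology. Applying $R\Gamma(X, -)$ and taking the resulting spectral sequences, the left-hand side $\{\HH_n^A(X) \otimes_A A/I^r\}_r$ and the right-hand side $\{\HH_n^{A/I^r}(X_r)\}_r$ are computed by pro-systems of spectral sequences whose $E_2$-pages are, on the left, $\{H^{-p}(X, \mathcal{H}_q) \otimes_A A/I^r\}_r$ and, on the right, $\{H^{-p}(X_r, \mathcal{H}_{q,r})\}_r$. By Grothendieck's formal functions theorem in its pro form — i.e., $\{H^j(X, \mathcal{F}) \otimes_A A/I^r\}_r \cong \{H^j(X_r, \mathcal{F}/I^r\mathcal{F})\}_r$ for coherent $\mathcal{F}$, which follows from the theorem on formal functions together with Artin–Rees to kill the discrepancy between $\mathcal{F}/I^r\mathcal{F}$ and the $I$-adic completion layers — these $E_2$-terms agree as pro-abelian groups (more precisely as pro $A$-modules), provided one has first passed to the sheaves $\mathcal{H}_{q,r}$ on $X_r$ and identified them pro-isomorphically with the reductions of $\mathcal{H}_q$, which is exactly the sheaf-level base change from the previous step. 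A comparison of the two pro-spectral sequences — legitimate since the category of pro abelian groups is abelian, so pro-systems of spectral sequences converge and a pro-isomorphism on $E_2$ induces one on the abutment — then yields the claimed pro-isomorphism on $\HH_n$.

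The main obstacle I anticipate is the bookkeeping in the sheaf-level base-change step: one must be careful that "derived Hochschild homology computed after a free simplicial resolution" genuinely sheafifies to a complex of quasi-coherent modules with coherent cohomology in each degree, and that the base change $-\otimes^{\mathbb{L}}_A A/I^r$ commutes with the sheafification and the totalization in the appropriate pro sense; the potential unboundedness of the Hochschild complex (it lives in infinitely many non-negative homological degrees) means the spectral sequences are half-plane but one still needs a convergence/Mittag-Leffler argument, again supplied by working pro in $r$ and invoking Artin–Rees degreewise, together with the finite Krull dimensionality of $X$ to bound the cohomological ($p$) direction. Everything else is a formal consequence of the classical formal functions theorem plus the exactness of the pro category.
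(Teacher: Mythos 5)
Your proposal is correct and follows essentially the same route as the paper: the paper reduces the Hochschild statement to André--Quillen homology via the (derived HKR) spectral sequence $E^2_{pq}=D_p^q(X|A,M)\Rightarrow H^A_{p+q}(X,M)$, proves the sheaf-level base change affine-locally by Artin--Rees/Tor-vanishing, compares the hypercohomology spectral sequences of coherent sheaves using the pro form of Grothendieck's formal functions theorem, and handles the change of base ring $A\to A/I^r$ by a Jacobi--Zariski spectral sequence argument --- exactly the three ingredients you identify (cotangent-complex base change in the pro category, coherent formal functions, and a comparison of bounded pro spectral sequences). The only difference is packaging: you state the base change as a single pro-equivalence of derived Hochschild complexes, whereas the paper factors it into separate statements for coefficients, thickenings, and base ring.
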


If $A$, as in the previous theorem, is moreover $I$-adically complete, then it follows that the canonical map $HH_n^A(X)\To\projlim_rHH_n^{A/I^r}(X\times_AA/I^r)$ is an isomorphism for all $n\in\bb Z$, which is the statement closest to the usual formulation of Grothendieck's formal functions theorem for coherent cohomology. 

The proof of Theorem \ref{theorem6} in finite characteristic is similar in outline, except that the results concerning Andr\'e--Quillen, Hochschild, and cyclic homology in Sections \ref{section_pro_h_for_AQ} and \ref{subsection_HH_and_HC_of_schemes} must be replaced by similar results for topological Hochschild and cyclic homology. These results are not contained in the current paper, but may rather be found in recent joint work with B.~Dundas \cite{Morrow_Dundas}, which includes in particular the $\THH$ analogue of Theorem \ref{theorem_intro_HH}. The few details missing from [op.~cit.], where for example the analogue of Theorem \ref{theorem2} for $\THH$ and $\TC$ was not explicitly stated, are given in Section \ref{subsection_HH_and_HC_of_schemes}.

\subsection*{Notation, etc.}
All rings are commutative, associative, and unital. If $A_\blob$ is a simplicial abelian group, then we tend to abuse notation by speaking of the homology groups $H_n(A_\blob)$ of $A_\blob$, rather than more correctly (but equivalently) the simplicial homotopy groups $\pi_n(A_\blob)$ or the homology groups $H_n(NA_\blob)$ of the normalised complex. This should not cause any confusion.

\subsection*{Acknowledgments}
It is a pleasure to take this opportunity to thank B.~Dundas for the collaboration \cite{Morrow_Dundas}, without which Theorem \ref{theorem6} would be restricted to characteristic zero. The majority of this work was done while I was supported by the Simons foundation as a postdoctoral fellow at the University of Chicago, and I am grateful to the foundation for their support.

\section{Review of pro modules and Artin--Rees properties}\label{section_pro_review}
\subsection{Pro abelian groups and pro modules}
Everything we need about categories of pro objects may be found in one of the standard references, such as the appendix to \cite{ArtinMazur1969}, or \cite{Isaksen2002}. We will often use $\op{Pro}(A\op{-}mod)$, the category of pro $A$-modules for some ring $A$, and $\op{Pro}Ab$, the category of pro abelian groups.

Let $\cal C$ be a category. In this paper, an object of $\op{Pro}\cal C$ is simply an inverse system $\cdots\to A_2\to A_1$ of objects and morphisms in $\cal C$, which is denoted $\{A_r\}_r$ or very occasionally $A_\infty$. Morphisms in $\op{Pro}\cal C$ are given by the rule \[\Hom_{\op{Pro}\cal C}(\{A_r\}_r,\{B_s\}_s):=\projlim_s\indlim_r\Hom_{\cal C}(A_r,B_s),\] where the right side is a genuine pro-ind limit in the category of sets, and composition is defined in the obvious way. For example, a pro object $\{A_r\}_r$ is isomorphic to zero (assuming that a zero object exists in $\cal C$, hence also in $\op{Pro}\cal C$) if and only if for each $r\ge 1$ there exists $s\ge r$ such that the transition map $A_s\to A_r$ is zero.

There is a fully faithful embedding $\cal C\to\op{Pro}\cal C$. Assuming $\cal C$ has countable inverse limits, this has a right adjoint $\op{Pro}\cal C\to \cal C,\; \{A_r\}_r\mapsto \projlim_r A_r$, which is left exact but not right exact. Moreover, if $\cal C$ is an abelian category then so is $\op{Pro}\cal C$: given a inverse system of exact sequences \[\cdots\To A_{n-1}(r)\To A_n(r)\To A_{n+1}(r)\To\cdots,\] the ``limit as $r\to\infty$'', namely \[\cdots \To \{A_{n-1}(r)\}_r\To \{A_n(r)\}_r\To \{A_{n+1}(r)\}_r\To\cdots,\] is an exact sequence in $\op{Pro}\cal C$.

Pro spectral sequences play an important role in the paper, which we will discuss for concreteness only in the case of abelian groups. Suppose that \[E^1_{pq}(r)\Longrightarrow H_{p+q}(r),\] for $r\ge 1$, are spectral sequences of abelian groups, which are functorial in that we have morphisms of spectral sequences $\cdots\to E^\bullet_{pq}(2)\to E^\bullet_{pq}(1)$. To avoid convergence issues, suppose that each spectral sequence is bounded, by a bound independent of $r$; e.g., each spectral sequence might be zero outside the first quadrant. Then we will often let ``$r\to\infty$'' to obtain a spectral sequence of pro abelian groups \[E^1_{pq}(\infty):=\{E^1_{pq}(r)\}_r\Longrightarrow \{H_{p+q}(r)\}_r.\] For further discussion and a dummy examples, see \cite[App.~A]{Morrow_pro_h_unitality}.

\subsection{Artin--Rees properties}
For the sake of reference, we now formally state a fundamental Artin--Rees result which will be used several times; this result appears to have been first noticed and exploited by M.~Andr\'e \cite[Prop.~10 \& Lem.~11]{Andre1974} and D.~Quillen \cite[Lem.~9.9]{Quillen1968}:

\begin{theorem}[Andr\'e, Quillen]\label{theorem_Artin_Rees}
Let $A$ be a Noetherian ring, and $I\subseteq A$ any ideal.
\begin{enumerate}
\item If $M$ is a finitely generated $A$-module, then the pro $A$-module $\{\Tor_n^A(A/I^r,M)\}_r$ vanishes for all $n>0$.
\item The ``completion'' functor
\begin{align*}
-\otimes_A A/I^\infty\,:A\op-{mod}&\To  \op{Pro}A\op{-mod}\\
M&\mapsto \{M\otimes_AA/I^r\}_r
\end{align*}
is exact on the subcategory of finitely generated $A$-modules.
\end{enumerate}
\end{theorem}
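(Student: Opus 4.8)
The plan is to prove (i) by a routine application of the classical Artin--Rees lemma, reducing first to the case $n=1$, and then to deduce (ii) formally from (i) using the long exact Tor sequence together with the fact, recalled in Section~\ref{section_pro_review}, that a limit of an inverse system of exact sequences is exact in the pro category. I would begin with (i) in the case $n=1$. Since $A$ is Noetherian and $M$ is finitely generated, I choose a short exact sequence $0\to K\to F\to M\to 0$ with $F$ finitely generated free, so that $K$ is finitely generated; regard $K\subseteq F$. Because $\Tor_1^A(A/I^r,F)=0$, the long exact sequence of $-\otimes_AA/I^r$ identifies $\Tor_1^A(A/I^r,M)$ with the kernel of $K/I^rK\to F/I^rF$, which is $(K\cap I^rF)/I^rK$, and under these identifications the transition map $\Tor_1^A(A/I^r,M)\to\Tor_1^A(A/I^{r'},M)$ for $r\ge r'$ is induced by the natural projection $K/I^rK\to K/I^{r'}K$. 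The Artin--Rees lemma, applied to the submodule $K$ of the finitely generated $A$-module $F$, yields an integer $c\ge 0$ with $I^rF\cap K=I^{r-c}(I^cF\cap K)\subseteq I^{r-c}K$ for all $r\ge c$. Hence, given $r'\ge 1$ and setting $r:=r'+c$, the submodule $(K\cap I^rF)/I^rK$ maps to $0$ in $K/I^{r'}K$; that is, this transition map vanishes, which by the vanishing criterion for pro objects recalled in Section~\ref{section_pro_review} gives $\{\Tor_1^A(A/I^r,M)\}_r=0$.

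For general $n\ge 1$ in (i) I would dimension-shift. With $0\to K\to F\to M\to 0$ as above, the isomorphisms $\Tor_n^A(A/I^r,M)\cong\Tor_{n-1}^A(A/I^r,K)$ for $n\ge 2$ are natural in the first variable, hence compatible with the transition maps in $r$, so $\{\Tor_n^A(A/I^r,M)\}_r\cong\{\Tor_{n-1}^A(A/I^r,K)\}_r$. Iterating $n-1$ times reduces the claim to $\{\Tor_1^A(A/I^r,K_{n-1})\}_r=0$ for an $(n-1)$-st syzygy $K_{n-1}$ of $M$, which is again finitely generated since $A$ is Noetherian; the case $n=1$ finishes it. (Alternatively, I could compute $\Tor_\ast^A(A/I^r,M)$ directly from a resolution $P_\bullet\to M$ by finitely generated free modules and bound the cycles of $P_\bullet\otimes_AA/I^r$ using Artin--Rees for the boundary submodule $d(P_{n+1})\subseteq P_n$; the same index shift then kills the transition maps on homology.)

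Part (ii) should then be a formal consequence. Right exactness of $-\otimes_AA/I^r$ is inherited termwise, so I only need to check that the completion functor sends an injection $M'\hookrightarrow M$ of finitely generated modules to a monomorphism. Setting $M'':=M/M'$, the long exact Tor sequences for $-\otimes_AA/I^r$ assemble, as $r$ varies, into an inverse system of exact sequences $\Tor_1^A(A/I^r,M'')\to M'/I^rM'\to M/I^rM$; passing to the limit over $r$ gives, by the exactness statement recalled in Section~\ref{section_pro_review}, an exact sequence $\{\Tor_1^A(A/I^r,M'')\}_r\to\{M'/I^rM'\}_r\to\{M/I^rM\}_r$ of pro $A$-modules. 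Since $M''$ is finitely generated, the left-hand term vanishes by (i), so the right-hand map is injective, as required.

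I do not expect any genuine obstacle here: the whole argument is bookkeeping around the classical Artin--Rees lemma. The one place that calls for a little care is the case $n=1$ of (i) --- extracting from the Artin--Rees constant $c$ the \emph{uniform} index shift $r\mapsto r+c$ that makes the relevant transition maps on $\Tor_1$ literally zero, which is precisely what the pro category requires for vanishing.
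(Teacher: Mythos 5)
Your proof is correct and is essentially the argument the paper sketches: both rest on applying the classical Artin--Rees lemma to a syzygy sitting inside a finitely generated free module, and your parenthetical alternative (a resolution $P_\bullet\to M$ with Artin--Rees applied to $d(P_{n+1})\subseteq P_n$) is precisely the paper's version, while your main route via the $n=1$ case plus dimension shifting is an equivalent repackaging. Your deduction of (ii) from (i) via the long exact Tor sequence and exactness of pro limits is also exactly what the paper means by ``(ii) is just a restatement of (i)''.
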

\begin{proof}[Sketch of proof]
By picking a resolution $P_\bullet$ of $M$ by finitely generated projective $A$-modules and applying the classical Artin--Rees property to the pair $d(P_n)\subseteq P_{n-1}$, one sees that for each $r\ge1$ there exists $s\ge r$ such that the map \[\Tor_n^A(A/I^s,M)\to\Tor_n^A(A/I^r,M)\] is zero. This proves (i). (ii) is just a restatement of (i).
\end{proof}

Next we quote a similar Artin--Rees result for Andr\'e--Quillen homology from \cite{Morrow_pro_h_unitality}, a companion to this paper which treats pro excision problems; the basics of Andr\'e--Quillen homology will be recalled in Section \ref{AQ_for_schemes}:

\begin{theorem}\label{theorem_AR_properties_in_AQ_homology}
Let $k\to A$ be a homomorphism of Noetherian rings, $I\subseteq A$ an ideal, and $M$ an $A$-module. Then:
\begin{enumerate}
\item $\{D_n^i(A/I^r|A,M/I^rM)\}_r=0$ for all $n\ge0$, $i\ge1$.
\item The canonical map $\{D_n^i(A|k,M/I^rM)\}_r\To\{D_n^i(A/I^r|k,M/I^rM)\}_r$ is an isomorphism for all $n\ge 0$, $i\ge0$.
\end{enumerate}
\end{theorem}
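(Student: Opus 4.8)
The plan is to collapse the whole statement to the single fact that the cotangent complex $L_{(A/I^r)/A}$ is \emph{pro-acyclic} in $r$, and then to prove that following Quillen \cite[Lem.~9.9]{Quillen1968} and Andr\'e \cite[Prop.~10 \& Lem.~11]{Andre1974}. First observe that (i) is nothing but the special case $k=A$ of (ii): since $L_{A/A}=0$ we have $D_n^i(A|A,-)=0$ for $n\ge0$, $i\ge1$, so (ii) with base $A$ in place of $k$ reads $\{D_n^i(A/I^r|A,M/I^rM)\}_r=0$. Conversely, I would deduce (ii) from (i). For each $r$ the transitivity cofibre sequence $L_{A/k}\otimes^{\mathbb{L}}_A A/I^r\to L_{(A/I^r)/k}\to L_{(A/I^r)/A}$, together with Illusie's filtration on the derived exterior powers of a cofibre sequence, yields a spectral sequence — functorial in $r$ and bounded in each total degree — converging to $D_*^i(A/I^r|k,M/I^rM)$, whose graded pieces are built either from $\mathbb{L}\Lambda^p L_{A/k}$ with $p\le i$ or from $\mathbb{L}\Lambda^q L_{(A/I^r)/A}$ with $q\ge1$. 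Letting $r\to\infty$, which turns this into a bounded spectral sequence of pro abelian groups as recalled in Section~\ref{section_pro_review}, part (i) annihilates every term of the second kind, and the terms of the first kind reassemble $D_*^i(A|k,-)$ applied to the tower $\{M/I^rM\}_r$; the only subtlety is to replace $\{M\otimes_A A/I^r\}_r$ by $\{M\otimes^{\mathbb{L}}_A A/I^r\}_r$, which — after reducing to finitely generated $M$ by a filtered colimit argument, legitimate since everything commutes with filtered colimits in the coefficient — is exactly Theorem~\ref{theorem_Artin_Rees}(i). Hence it suffices to prove (i).

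Next I would reduce (i) to the assertion $\{H_n(L_{(A/I^r)/A})\}_r=0$ for all $n\ge0$. Granting this, fix $n$ and $i\ge1$. Because $A\to A/I^r$ is surjective, $H_0(L_{(A/I^r)/A})=\Omega_{(A/I^r)/A}=0$, so the complex is $1$-connective; hence Illusie's d\'ecalage filtration presents $\tau_{\le n}\mathbb{L}\Lambda^i L_{(A/I^r)/A}$ — on which $D_n^i(A/I^r|A,M/I^rM)=H_n(M/I^rM\otimes^{\mathbb{L}}_{A/I^r}\mathbb{L}\Lambda^i L_{(A/I^r)/A})$ only depends, as $M/I^rM$ sits in degree $0$ — as a finite iterated extension of modules assembled from the exterior powers $\Lambda^\bullet$ and divided powers $\Gamma^\bullet$ ($\bullet\ge1$) of the modules $H_j(L_{(A/I^r)/A})$, $1\le j\le n$. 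Since $\Lambda^\bullet$ and $\Gamma^\bullet$ ($\bullet\ge1$) send a pro-zero pro $A$-module to a pro-zero one (a zero transition map maps to a zero transition map), and $\Tor$ against a fixed finitely generated module preserves pro-vanishing, the hyper-$\Tor$ spectral sequence forces $\{D_n^i(A/I^r|A,M/I^rM)\}_r=0$. So everything comes down to the pro-acyclicity of $\{L_{(A/I^r)/A}\}_r$.

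For the pro-acyclicity, two pro $A$-modules vanish almost for free. The conormal module $\{H_1(L_{(A/I^r)/A})\}_r=\{I^r/I^{2r}\}_r$ is pro-zero because the transition map from index $2r$ to index $r$ sends $x+I^{4r}\mapsto x+I^{2r}$ for $x\in I^{2r}$ and is therefore identically zero; and $\{\Tor_q^A(A/I^r,A/I^r)\}_r$ is pro-zero for every $q\ge1$, since Theorem~\ref{theorem_Artin_Rees}(i) applied to the finitely generated module $A/I^r$ with $r$ held fixed gives $\{\Tor_q^A(A/I^s,A/I^r)\}_s=0$, and feeding this through the factorisation $A/I^{r'}\to A/I^s\to A/I^r$ (valid for $r'\ge s\ge r$) upgrades it to pro-vanishing of the diagonal. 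Quillen's fundamental spectral sequence for the cotangent complex of a surjection then expresses $H_n(L_{(A/I^r)/A})$, functorially in $r$, through a finite filtration whose graded pieces are built from $\Lambda^\bullet$ and $\Gamma^\bullet$ of the conormal and from the higher self-$\Tor$ modules $\Tor_q^A(A/I^r,A/I^r)$, $q\ge1$; as each of these is pro-zero, so is $\{H_n(L_{(A/I^r)/A})\}_r$ for all $n$. (Alternatively one may induct on $n$ using the Lichtenbaum--Schlessinger complex and repeated application of the classical Artin--Rees lemma to the cycle and boundary submodules of a fixed free simplicial resolution of $A/I$ over $A$, avoiding the spectral sequence at the price of heavier bookkeeping.)

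The main obstacle is exactly this last step. The conormal vanishing is cheap, and the reductions above — (i) $\Leftrightarrow$ (ii) via transitivity, general $i$ from $i=1$ via d\'ecalage, reduction to finitely generated coefficients, and the comparison of $\otimes_A A/I^r$ with $\otimes^{\mathbb{L}}_A A/I^r$ — are routine once the pro spectral sequence formalism of Section~\ref{section_pro_review} is in hand. But propagating the pro-vanishing of the conormal up through the higher homology of $L_{(A/I^r)/A}$ uniformly in $r$ requires either knowing that the $E^2$-page of Quillen's spectral sequence genuinely involves only the conormal and the self-$\Tor$ modules (so that pro-vanishing is inherited), or, in the hands-on approach, choosing for each $r$ a later index $s=s(r)$ that simultaneously annihilates all the relevant cycles and boundaries in the finitely many spots — in a fixed homological degree — where they occur.
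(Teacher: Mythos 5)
The paper does not prove this theorem directly: it quotes it from the companion paper \cite{Morrow_pro_h_unitality} (Thm.~4.3 there), whose hypothesis --- pro H-unitality of $\{I^r\}_r$, i.e.\ the vanishing $\{\Tor_q^A(A/I^r,A/I^r)\}_r=0$ for $q\ge1$ (Thm.~0.3 there) --- is exactly the input you extract from Theorem \ref{theorem_Artin_Rees}(i) by factoring the diagonal transition map through $\Tor_q^A(A/I^s,A/I^r)$. Your route --- conormal and self-$\Tor$ pro-vanishing fed through Quillen's fundamental spectral sequence to obtain pro-acyclicity of $\{\bb L_{(A/I^r)|A}\}_r$, then d\'ecalage for $i\ge2$ and the transitivity filtration for (ii) --- is therefore essentially a reconstruction of the cited proof, and your inductive step is not circular: the differentials hitting the $p=1$ column of the fundamental spectral sequence in total degree $n$ originate in $H_{n+1}$ of the $p\ge 3$ derived exterior powers, which depend only on the truncation $\tau_{\le n-1}\bb L_{(A/I^r)|A}$, pro-acyclic by induction.

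One sub-step is wrong as written, although it is also unnecessary. Pro-isomorphisms are \emph{not} preserved by filtered colimits in the coefficient module: the witnessing index $s(r)$ need not be bounded over the colimit diagram, and indeed Theorem \ref{theorem_Artin_Rees}(i) genuinely fails for non-finitely-generated $M$ (take $M=\bigoplus_s A/I^s$). So ``reducing to finitely generated $M$ by a filtered colimit argument'' is not legitimate. Fortunately you never need to compare $\{M\otimes_AA/I^r\}_r$ with its derived version: both sides of the map in (ii) already carry the coefficient $M/I^rM$, and the top ($p=i$) graded piece of the Illusie filtration is, by base change along $A\to A/I^r$ and associativity of derived tensor products, exactly $\bb L^i_{A|k}\otimes_AM/I^rM$, i.e.\ the source of the map on the nose. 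Deleting that sentence repairs the argument; the pro-vanishing of the lower graded pieces for arbitrary $M$ follows from the case $M=A$ because tensor products and $\Tor$ are additive in the $\bb L_{(A/I^r)|A}$-variable and hence carry zero transition maps to zero transition maps.
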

\begin{proof}
This is \cite[Thm.~4.3]{Morrow_pro_h_unitality}, where the pro H-unitality hypothesis of the cited theorem is satisfied thanks to \cite[Thm.~0.3]{Morrow_pro_h_unitality}.
\end{proof}

\section{Formal function properties and pro descent for Andr\'e--Quillen homology}\label{section_pro_h_for_AQ}
The aim of this section is to prove that Andr\'e--Quillen homology of proper schemes satisfies certain formal function properties. These will be established in Section \ref{subsection_formal_function_for_AQ}, after preliminary material on Andr\'e--Quillen homology is presented in Section \ref{AQ_for_schemes}. From these formal function properties we show in Section \ref{subsection_descent_for_AQ} that Andr\'e--Quillen homology satisfies the desired pro Mayer--Vietoris property with respect to abstract blow-up squares: this is a key step in deducing the same for $K$-theory in characteristic zero.

\subsection{Andr\'e--Quillen homology for schemes}\label{AQ_for_schemes}
In this preliminary section we recall the fundamentals of Andr\'e--Quillen homology \cite{Andre1974, Quillen1970, Ronco1993}, though we expect that the reader already has some familiarity with it, before extending it from algebras to schemes in the standard way using hypercohomology, as was done for Hochschild and cyclic homology in \cite{Weibel1991, Weibel1996}.

Let $k\to A$ be a homomorphism of rings; let $P_\bullet\to A$ be a simplicial resolution of $A$ by free $k$-algebras, and set \[\bb L_{A|k}:= \Omega_{P_\bullet|k}^1\otimes_{P_\bullet}A.\] Thus $\bb L_{A|k}$ is a simplicial $A$-module which is free in each degree; it is called the {\em cotangent complex} of the $k$-algebra $A$. The cotangent complex up to homotopy depends only on $A$, since the free simplicial resolution $P_\bullet\to A$ is unique up to homotopy.

Given simplicial $A$-modules $M_\bullet$, $N_\bullet$, the tensor product and alternating powers are new simplicial $A$-modules defined degreewise: $(M_\bullet\otimes_AN_\bullet)_n=M_n\otimes_AN_n$ and $(\bigwedge_A^rM_\bullet)_n=\bigwedge_A^rM_n$.

In particular we set $\bb L_{A|k}^i:=\bigwedge_A^i\bb L_{A|k}$ for each $i\ge 1$. The {\em Andr\'e--Quillen homology} of the $k$-algebra $A$, with coefficients in any $A$-module $M$, is defined by \[D_n^i(A|k,M):=H_n(\bb L_{A|k}^i\otimes_A M),\] for $n\ge 0$, $i\ge 1$. When $M=A$ the notation is simplified to \[D_n^i(A|k):=D_n^i(A|k,A)=H_n(\bb L_{A|k}^i).\] When $i=1$ the superscript is often omitted, writing $D_n(A|k,M)=H_n(\bb L_{A|k}\otimes_A M)$ and $D_n(A|k)=H_n(\bb L_{A|k})$ instead. If $k\to A$ is essentially of finite type and $k$ is Noetherian, then $D_n^i(A|k,M)$ is a finitely generated $A$-module for all $n,i$ and for all finitely generated $A$-modules $M$.

To avoid any ambiguity, we also remark that the notation $D_n^i(A|k,M)$ is defined in the same way if $i\le 0$ or $n<0$. However, $D_n^i(A|k,M)=0$ if $n<0$ and \[D_n^0(A|k,M)=\begin{cases}M&n=0,\\0&\mbox{else,}\end{cases}\] since $\bb L_{A|k}^0\otimes_AM\simeq M$.

Now we extend the definition of Andr\'e--Quillen homology to schemes; since we restrict to Noetherian schemes of finite Krull dimension, we may follow a classical approach, as we explain in the next remark:

\begin{remark}[Hypercohomology of non-bounded below complexes]\label{remark_hypercohomology} Our main results concerning Andr\'e--Quillen, and later Hochschild and cyclic, homology of schemes concern Noetherian schemes or even varieties; therefore we will restrict throughout to Noetherian schemes of finite Krull dimension to simplify hypercohomology of complexes which are not necessarily bounded below.

The Zariski hypercohomology of a cochain complex of sheaves $M^\blob$ on a Noetherian scheme $X$ of finite Krull dimension is defined as follows \cite[Appendix C]{Milne1980}: there exists a quasi-isomorphism $M^\blob\quis\cal F^\blob$, where $\cal F^\blob$ is a cochain complex of acyclic sheaves on $X$, and one shows that \[\bb H^*(X,M^\blob):=H^*(\cal F^\blob(X))\] is independent of $\cal F$. We may use the Cartan--Eilenberg approach to construct $\cal F^\blob$: first pick functorial acyclic resolutions $M^p\quis \cal F^{p\blob}$ of length $\le\dim X$ for each $p\in\bb Z$ (e.g., the Godement resolution), and then set $\cal F^\blob=\op{Tot}F^{\blob\blob}$; note that at most $1+\dim X$ terms appear in each direct sum $\cal F^n=\bigoplus_{p+q=n}\cal F^{p\,q}$.

The familiar spectral sequences
\begin{align*}
E_1^{pq}=H^q(X,M^p)&\implies \bb H^{p+q}(X,M^\blob)\\
'E_2^{pq}=H^p(X,H^q(M^\blob))&\implies\bb H^{p+q}(X,M^\blob)
\end{align*}
remain valid and bounded. We will also require two variations on Deligne's spectral sequence for the hypercohomology of a bounded-below filtered complex \cite[1.4.5]{Deligne1971}, which are not hard to prove given the boundedness assumptions we impose:
\begin{enumerate}
\item Let $M^{\blob\blob}$ be a bounded (i.e., $M^{p\,q}$ is non-zero for only finitely many $p,q$ in each bounded region of the plane), double cochain complex of sheaves. Then there is a bounded spectral sequence \[E_2^{pq}=\bb H^p(X,H^q_\sub{v}(M^{\bullet\bullet}))\implies \bb H^{p+q}(X,\op{Tot}M^{\bullet\bullet}),\] where $H_\sub v$ indicates taking cohomology in the second index.
\item Let $M^\blob$ be a cochain complex of sheaves equipped with a descending filtration $FM^\blob$ which satisfies $F^0M^\blob=M^\blob$ and $F^pM^\blob=0$ for $p\gg0$. Then there is a bounded spectral sequence \[E_1^{pq}=\bb H^{p+q}(X,\op{gr}^qM^\blob)\implies \bb H^{p+q}(X,M^\blob).\]
\end{enumerate}
\end{remark}

Now we are prepared to define Andr\'e--Quillen homology of schemes. Let $k$ be a ring and fix $i\ge 0$. First notice that if $R$ is a $k$-algebra then is possible to choose a simplicial resolution $P_\bullet(R)\to R$ by free $k$-algebras in a way which is functorial in $R$; e.g., via the comonad associating to $R$ the free $k$-algebra generated by the set $R$, see \cite[E.g.\ 8.6.16]{Weibel1994}. So, given a scheme $X$ over $k$, let $\tilde P_\blob(X)$ denote the simplicial sheaf of $k$-algebras obtained by degree-wise sheafifying $U\mapsto P_\blob(\roi_X(U))$; also put $\bb L_{X|k}^i:=\Omega^i_{\tilde P_\blob(X)/k}\otimes_{\tilde P_\blob(X)}\roi_X$. Given a quasi-coherent $\roi_X$-module $M$, and assuming that $X$ is Noetherian of finite Krull dimension, define the Andr\'e--Quillen homology of $X$, relative to $k$, with coefficients in $M$ to be \[D_n^i(X|k,M):=\bb H^{-n}(X,\bb L_{X|k,\sub{neg}}^i\otimes_{\roi_X}M),\] where the subscript neg indicates replacing the simplicial sheaf $\bb L_{X|k}^i$ by its associated chain complex and then negating the indexing to get a cochain complex of sheaves.

We continue the standard notational abuses of omitting $i$ when it equals $1$ and omitting $M$ when it equals $\roi_X$. Note, from (i) and (ii) below, that $D_n^i(X|k)$ can be non-zero for $n$ both positive and negative.

The following consequences are either automatic from the construction, or follow from the previous remark on hypercohomology and mimicking the arguments for Hochschild homology given in \cite{Weibel1991, Weibel1996}, bearing in mind that Andr\'e--Quillen homology of rings behaves well under localisation:

\begin{description}
\item[$\pmb{i=0}$:] Since $\bb L_{X|k,\sub{neg}}^0\otimes_{\roi_X}M\simeq M$, we have $D_n^0(X|k,M)\cong H^{-n}_\sub{Zar}(X,M)$.
\item[Agreement on affines:] If $X=\Spec A$ is an affine $k$-scheme, then $D_n^i(X|k,M)=D_n^i(A|k,M)$.
\item[Mayer--Vietoris property:] If $U,V$ are an open cover of $X$, then there is a long exact, Mayer--Vietoris sequence \[\cdots\to D_n^i(X|k,M)\to D_n^i(U|k,M|_U)\oplus D_n^i(V|k,M|_V)\to D_n^i(U\cap V|k,M|_{U\cap V})\to \cdots\]
\item[Hypercohomology spectral sequence:] There is a bounded fourth quadrant spectral sequence \[E_2^{pq}=H^p_\sub{Zar}(X,\cal D_{-q}^i(X|k,M))\Longrightarrow D_{-p-q}^i(X|k,M),\] where $\cal D_{-q}^i(X|k,M)$ denotes the Zariski sheafification of $U\mapsto D_{-q}^i(U|k,M|_U)$. This shows that $D_n^i(X|k,M)=0$ for $n<-\dim X$.
\end{description}
Moreover,
\begin{description}
\item[Homology l.e.s.:] If $0\to M\to N\to P\to 0$ is a short exact sequence of quasi-coherent $\roi_X$-modules, then there is a long exact sequence of Andr\'e--Quillen homology \[\cdots\To D_n^i(X|k,M) \To D_n^i(X|k,N)\To D_n^i(X|k,P)\To\cdots\]
(Proof: The long exact hypercohomology sequence associated to a short exact sequences of complexes of sheaves.)
\item[Higher Jacobi--Zariski spectral sequence:] Let $k\to A$ be a homomorphism of commutative rings, let $X$ be a Noetherian scheme over $A$ of finite Krull dimension, let $M$ be a quasi-coherent $\roi_X$-module, and fix $i\ge 0$. Then there is a natural, bounded spectral sequence of $A$-modules $E^1_{pq}\Longrightarrow D_{p+q}^i(X|k,M)$ whose columns may be described as follows:
\begin{enumerate}
\item Suppose $p<-i$ or $p>0$. Then $E^1_{pq}=0$.
\item Suppose $-i\le p\le0$. Then the $p^\sub{th}$ column of the $E^1$-page is given by a bounded spectral sequence \[\cal E^2_{\al\beta}=D_\al^{-p}\big(A|k,D_\beta^{i+p}(X|A,M)\big)\Longrightarrow E_{p,\al+\beta-p}^1.\] (Note that $\cal E_{\al\beta}^2=0$ if $\al$ or $\beta$ is $<-\dim X$.)
\end{enumerate}
(Proof: In the case that $X$ is affine, this is due to C.~Kassel and A.~Sletsj\o e \cite{Kassel1992}; a proof may also be found in \cite[Prop.~3.6]{Morrow_pro_h_unitality}. Examining the proof in the affine case reveals that $\bb L_{X|k}^i$ has a filtration with graded pieces $\op{gr}^p\bb L_{X|k}^i\simeq\bb L_{A|k}^p\otimes_A\bb L_{X/A}^{i-p}$ for $p=0,\dots,i$. Tensoring by $M$, and applying the variations on Deligne's spectral sequence for a filtered complex given in the Remark \ref{remark_hypercohomology}, one easily obtains the desired spectral sequence.)
\end{description}

\begin{example}
Suppose that $X$ is smooth over $k$, or more generally that $X$ has an affine open cover by the spectra of geometrically regular $k$-algebras. Then $\bb L_{X|k}^i\simeq\Omega_{X|k}^i$, and so $D_n^i(X|k)\cong H^{-n}(X,\Omega_{X|k}^i)$ for all $n\in\bb Z$, $i\ge0$.
\end{example}

This completes our discussion of Andr\'e--Quillen homology for schemes.
\subsection{Formal function properties for AQ homology of schemes}\label{subsection_formal_function_for_AQ}
Now that the basic properties of Andr\'e--Quillen homology for schemes have been established, we show that it satisfies formal function properties akin to Grothendieck's formal functions theorem. We begin with a lemma in the affine case:

\begin{lemma}
Let $A\to B$ be an essentially finite type morphism of Noetherian rings, $I\subseteq B$ an ideal, and $M$ a finitely generated $B$-module. Then the canonical map \[\{D_n^i(B|A,M)\otimes_AA/I^r\}\To\{D_n^i(B|A,M/I^rM)\}_r\] is an isomorphism for all $n,i\ge0$.
\end{lemma}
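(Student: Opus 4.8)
The plan is to factor the canonical map as a composite of two pro-isomorphisms, each obtained by combining a spectral sequence with the Artin--Rees Theorem~\ref{theorem_Artin_Rees}. Throughout write $L_\blob:=\bb L^i_{B|A}$: this is a chain complex of free $B$-modules concentrated in nonnegative degrees, so that $D^i_n(B|A,N)=H_n(L_\blob\otimes_BN)$ for every $B$-module $N$, and these groups are finitely generated $B$-modules whenever $N$ is, because $A\to B$ is essentially of finite type over the Noetherian ring $A$. Since $L_\blob$ consists of flat $B$-modules, tensoring a complex with $L_\blob$ is exact, sends quasi-isomorphisms of bounded-below complexes to quasi-isomorphisms, and computes the derived tensor product; I will use this freely, and write $\otimes^{\mathbf L}_B$ for derived tensor products of complexes of $B$-modules.

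\emph{Step 1.} Put $C_\blob:=L_\blob\otimes_BM$, a bounded-below complex of $B$-modules with $H_n(C_\blob)=D^i_n(B|A,M)$ finitely generated for every $n$. The universal coefficient (hyper-$\Tor$) spectral sequence
\[E^2_{pq}(r)=\Tor^B_p\big(D^i_q(B|A,M),B/I^r\big)\implies H_{p+q}\big(C_\blob\otimes^{\mathbf L}_BB/I^r\big)\]
lies in the first quadrant, hence is bounded by a bound independent of $r$. Letting $r\to\infty$ and applying Theorem~\ref{theorem_Artin_Rees}(i) to the finitely generated $B$-module $D^i_q(B|A,M)$, the columns with $p>0$ become pro-zero, so the resulting pro spectral sequence collapses onto the column $p=0$ and yields a pro-isomorphism $\{H_n(C_\blob\otimes^{\mathbf L}_BB/I^r)\}_r\cong\{D^i_n(B|A,M)\otimes_BB/I^r\}_r$.

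\emph{Step 2.} On the other hand $D^i_n(B|A,M/I^rM)=H_n(L_\blob\otimes_BM/I^rM)=H_n(C_\blob\otimes_BB/I^r)$ --- now the \emph{underived} tensor product --- so it remains to compare $H_n(C_\blob\otimes^{\mathbf L}_BB/I^r)$ with $H_n(C_\blob\otimes_BB/I^r)$. Let $N_\blob(r)$ be the mapping cone of the natural map $M\otimes^{\mathbf L}_BB/I^r\to M/I^rM$; its homology in degree $q$ is $\Tor^B_{q-1}(M,B/I^r)$ for $q\ge2$ and vanishes otherwise, so $\{H_q(N_\blob(r))\}_r=0$ for all $q$ by Theorem~\ref{theorem_Artin_Rees}(i). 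Since $C_\blob=L_\blob\otimes_BM$ with $L_\blob$ flat, the mapping cone of the natural map $C_\blob\otimes^{\mathbf L}_BB/I^r\to C_\blob\otimes_BB/I^r$ is $L_\blob\otimes_BN_\blob(r)$, and since $L_\blob$ and $N_\blob(r)$ are bounded below there is a spectral sequence
\[{}'E^2_{pq}(r)=D^i_p\big(B|A,H_q(N_\blob(r))\big)\implies H_{p+q}\big(L_\blob\otimes_BN_\blob(r)\big)\]
which also lies in the first quadrant. As $L_\blob\otimes_B(-)$ carries the zero map to the zero map, a pro-zero system of coefficient modules has pro-zero Andr\'e--Quillen homology; hence $\{{}'E^2_{pq}(r)\}_r=0$ for all $p,q$, so $\{H_n(L_\blob\otimes_BN_\blob(r))\}_r=0$ for all $n$, and therefore the natural map $\{H_n(C_\blob\otimes^{\mathbf L}_BB/I^r)\}_r\to\{H_n(C_\blob\otimes_BB/I^r)\}_r=\{D^i_n(B|A,M/I^rM)\}_r$ is a pro-isomorphism.

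Composing the pro-isomorphisms of Steps 1 and 2 identifies $\{D^i_n(B|A,M)\otimes_BB/I^r\}_r$ with $\{D^i_n(B|A,M/I^rM)\}_r$, and unwinding the edge map of Step 1 together with the truncation $M\otimes^{\mathbf L}_BB/I^r\to M/I^rM$ of Step 2 shows that the composite is the canonical map of the statement. The one genuine obstacle is conceptual rather than computational: the universal coefficient spectral sequence cannot be applied directly to $\bb L^i_{B|A}\otimes_BM$, which fails to be a complex of flat $B$-modules, so one must pass to derived tensor products and then absorb the discrepancy between $M\otimes^{\mathbf L}_BB/I^r$ and $M/I^rM$ --- itself an instance of Artin--Rees applied to $M$ --- all while checking that the spectral sequences involved are bounded by bounds independent of $r$, so that the passage to the pro category is legitimate.
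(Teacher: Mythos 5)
Your proposal is correct, and Step 1 is exactly the paper's argument: the universal coefficient/K\"unneth spectral sequence $\Tor_p^B(D_q^i(B|A,M),B/I^r)\Rightarrow\cdots$ together with Theorem \ref{theorem_Artin_Rees}(i) to kill the columns $p>0$ in the pro category. The only divergence is your Step 2: the paper simply asserts that the spectral sequence abuts to the underived groups $D^i_{p+q}(B|A,M\otimes_BB/I^r)=D^i_{p+q}(B|A,M/I^rM)$, whereas you correctly observe that, since $\bb L^i_{B|A}\otimes_BM$ is not a complex of flat modules, the natural abutment is the derived tensor product, and you close the gap by showing the discrepancy $\bb L^i_{B|A}\otimes_B N_\blob(r)$ is pro-acyclic, again by Artin--Rees applied to $M$. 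This is a legitimate (and pro-harmless) imprecision in the paper's one-line justification of its auxiliary spectral sequence; your version spells out why it does not matter in the limit over $r$, at the cost of an extra spectral sequence. Both arguments prove the same statement by the same mechanism.
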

\begin{proof}
Given another $B$-module $N$, there is a natural, first quadrant spectral sequence \[E_{pq}^2=\Tor_p^B(D_q^i(B|A,M),N)\Longrightarrow D_{p+q}^i(B|A,M\otimes_BN).\] This is proved in the usual way by choosing a resolution of $N$ by flat $B$-modules and applying the Kunneth spectral sequence for a tensor product of complexes.

Applying this with $N=B/I^r$ gives spectral sequences \[E_{pq}^2(r)=\Tor_p^B(D_q^i(B|A,M),B/I^r)\Longrightarrow D_{p+q}^i(B|A,M/I^rM),\] and taking the limit over $r$ yields a first quadrant spectral sequence of pro $B$-modules: \[E^2_{pq}(\infty)=\{\Tor_p^B(D_q^i(B|A,M),B/I^r)\}_r\Longrightarrow \{D_{p+q}^i(B|A,M/I^rM)\}_r.\] Our hypotheses on $A,B$ and $M$ ensure that the $B$-modules $D_q^i(B|A,M)$ are finitely generated, whence Theorem \ref{theorem_Artin_Rees}(i) implies that $E^2_{pq}(\infty)=0$ for $p>0$. We thus obtain edge map isomorphisms $\{D_n^i(B|A,M)\otimes_BB/I^r\}\isoto\{D_n^i(B|A,M/I^rM)\}$, as desired.
\end{proof}

In the following results a sheaf of ideals $\cal I\subseteq\roi_X$ plays a prominent role; to keep notation clear, we write $rY$ for the $r^\sub{th}$-infinitesimal thickening of the closed subscheme defined by $\cal I$, i.e., \[rY:=\Spec\roi_X/\cal I^r.\]

\begin{theorem}\label{theorem_formal_function_for_AQ}
Let $A$ be a Noetherian ring and $\pi:X\to\Spec A$ a Noetherian scheme over $A$ of finite Krull dimension; let $\cal I\subseteq\roi_X$ be an ideal sheaf and $M$ a coherent $\roi_X$-module. Then:
\begin{enumerate}
\item The canonical map $\{D_n^i(X|A,M/\cal I^rM)\}_r\To\{D_n^i(rY|A,M/\cal I^rM)\}_r$ is an isomorphism for all $n\in\bb Z$, $i\ge0$.
\item Assuming that $\pi$ is essentially of finite type, the canonical map of pro sheaves $\{\cal D_n^i(X|A,M)\otimes_{\roi_X}\roi_X/\cal I^r\}_r\To\{\cal D_n^i(X|A,M/\cal I^rM)\}_r$ is an isomorphism for all $n\in\bb Z$, $i\ge0$.
\end{enumerate}
Now suppose further that $\pi$ is proper and that $\cal I=I\roi_X$ for some ideal $I\subseteq A$. Then:
\begin{enumerate}\setcounter{enumi}{2}
\item $D_n^i(X|A,M)$ is a finitely generated $A$-module for all $n\in\bb Z$, $i\ge 0$.
\item The canonical map $\{D_n^i(X|A,M)\otimes_AA/I^r\}_r\To\{D_n^i(X|A,M/\cal I^rM)\}_r$ is an isomorphism for all $n\in\bb Z$, $i\ge0$.
\end{enumerate}
\end{theorem}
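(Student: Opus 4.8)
The plan is to deduce the four global assertions from the affine and local statements already at our disposal --- Theorem~\ref{theorem_AR_properties_in_AQ_homology} and the lemma preceding this theorem --- by feeding them through the hypercohomology spectral sequence
\[E_2^{pq}=H^p_\sub{Zar}(X,\cal D_{-q}^i(X|A,N))\Longrightarrow D_{-p-q}^i(X|A,N)\]
(fourth-quadrant, with $0\le p\le\dim X$ since $X$ has finite Krull dimension, hence bounded along antidiagonals) together with its functoriality in the coefficient sheaf $N$, passing to pro spectral sequences in the sense of Section~\ref{section_pro_review} wherever we let $r\to\infty$.

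I would begin with (ii), which being a statement about sheaves may be checked on stalks. For $x\in X$ the stalk of $\cal D_n^i(X|A,M)$ is $D_n^i(\roi_{X,x}|A,M_x)$, since Andr\'e--Quillen homology commutes with the filtered colimit of rings computing $\roi_{X,x}$; as $A\to\roi_{X,x}$ is essentially of finite type and $M_x$ is finitely generated, the required pro-isomorphism of stalks is precisely the lemma preceding this theorem applied to the ideal $\cal I_x\sseq\roi_{X,x}$. For (i), fix an affine open $\Spec B\sseq X$ on which $\cal I$ is generated by $J\sseq B$; then Theorem~\ref{theorem_AR_properties_in_AQ_homology}(ii) gives a pro-isomorphism $\{D_n^i(B|A,M_B/J^rM_B)\}_r\isoto\{D_n^i(B/J^r|A,M_B/J^rM_B)\}_r$, and sheafifying exhibits $\{\cal D_{-q}^i(X|A,M/\cal I^rM)\}_r$ as pro-isomorphic, compatibly with the canonical map, to the pushforward along $rY\hookrightarrow X$ of $\{\cal D_{-q}^i(rY|A,M/\cal I^rM)\}_r$. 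Since pushforward along a closed immersion preserves cohomology, applying $H^*_\sub{Zar}(X,-)$ and comparing the two hypercohomology spectral sequences --- which now agree on $E_2$-pages --- yields (i).

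For (iii): a proper morphism is of finite type, so each affine open of $X$ is finite type over the Noetherian ring $A$, whence $\cal D_{-q}^i(X|A,M)$ is a coherent $\roi_X$-module; Grothendieck's finiteness theorem then makes each $H^p_\sub{Zar}(X,\cal D_{-q}^i(X|A,M))$ a finitely generated $A$-module. As only the finitely many columns $\max(0,-n)\le p\le\dim X$ contribute to $D_n^i(X|A,M)$, and subquotients of finitely generated modules over the Noetherian ring $A$ stay finitely generated, the abutment $D_n^i(X|A,M)$ is finitely generated, which is (iii).

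The remaining statement (iv) is the heart of the matter, and the hypothesis $\cal I=I\roi_X$ enters through the canonical identification $M\otimes_AA/I^r=M/\cal I^rM$. On one side, applying the ``completion'' functor $-\otimes_AA/I^\infty$, which by Theorem~\ref{theorem_Artin_Rees}(ii) is exact on finitely generated $A$-modules, to the hypercohomology spectral sequence for $M$ --- bounded, and by (iii) finitely generated in each bidegree --- produces a pro spectral sequence $\{H^p_\sub{Zar}(X,\cal D_{-q}^i(X|A,M))\otimes_AA/I^r\}_r\Longrightarrow\{D_{-p-q}^i(X|A,M)\otimes_AA/I^r\}_r$. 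On the other side, letting $r\to\infty$ in the hypercohomology spectral sequences for the coefficients $M/\cal I^rM$ gives $\{H^p_\sub{Zar}(X,\cal D_{-q}^i(X|A,M/\cal I^rM))\}_r\Longrightarrow\{D_{-p-q}^i(X|A,M/\cal I^rM)\}_r$. The canonical base-change maps furnish a morphism from the first pro spectral sequence to the second, and I claim it is an isomorphism on $E_2$-pages: by (ii) the pro sheaves $\{\cal D_{-q}^i(X|A,M)\otimes_{\roi_X}\roi_X/\cal I^r\}_r$ and $\{\cal D_{-q}^i(X|A,M/\cal I^rM)\}_r$ coincide, and the pro-refinement of Grothendieck's formal functions theorem --- equivalently, the case $i=0$ of the present theorem --- applied to the coherent sheaves $\cal D_{-q}^i(X|A,M)$ shows that $\{H^p_\sub{Zar}(X,\cal D_{-q}^i(X|A,M))\otimes_AA/I^r\}_r\to\{H^p_\sub{Zar}(X,\cal D_{-q}^i(X|A,M)\otimes_AA/I^r)\}_r$ is an isomorphism. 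Comparing abutments of the two pro spectral sequences then gives (iv). I expect the last step to be the main obstacle: one needs the pro-refined formal functions statement for ordinary coherent cohomology (which can be extracted from the Artin--Rees bookkeeping in Grothendieck's proof, applied to the $\cal I$-Rees module of $\cal D_{-q}^i(X|A,M)$), and one must check that the two spectral sequences are matched compatibly with all their differentials --- which is where the naturality of the base-change maps and the care taken in Remark~\ref{remark_hypercohomology} with hypercohomology of complexes that are not bounded below are used.
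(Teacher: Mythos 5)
Your overall strategy coincides with the paper's. Parts (iii) and (iv) are argued exactly as in the paper: coherence of the sheaves $\cal D_{-q}^i(X|A,M)$ plus properness gives (iii); then applying the functor $\{-\otimes_AA/I^r\}_r$ (exact on finitely generated $A$-modules by Theorem \ref{theorem_Artin_Rees}(ii)) to the bounded hypercohomology spectral sequence, and identifying $E_2$-pages via the pro-refined formal functions theorem for coherent cohomology followed by part (ii), gives (iv) --- the paper handles the pro-refinement of Grothendieck's theorem in exactly the way you anticipate, by inspecting the EGA proof. Your route to (i), comparing the two hypercohomology spectral sequences after identifying the $E_2$-pages via pushforward along the closed immersion, is an acceptable substitute for the paper's Mayer--Vietoris induction on an affine cover; both reduce to Theorem \ref{theorem_AR_properties_in_AQ_homology}(ii).

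The one genuine gap is the opening move of (ii): an isomorphism of \emph{pro} sheaves cannot be ``checked on stalks''. For the kernels $\{K_r\}_r$ (and cokernels) of the maps in question to vanish in the pro category one needs, for each $r$, a single $s\ge r$ such that $K_s\to K_r$ is the zero map of sheaves; the stalkwise statement only produces an index $s$ depending on the point $x$, with no a priori uniformity over $X$. The paper avoids this by taking a \emph{finite} affine cover $\{U_i\}$, applying the affine lemma to each coordinate ring to obtain one $s$ working for all $i$ simultaneously, and using that a map of quasi-coherent sheaves on an affine is zero if and only if it vanishes on global sections. Your stalkwise version can be repaired --- for instance, the supports of $\Im(K_s\to K_r)$ form a descending chain of closed subsets of the Noetherian space $X$, which by your stalkwise vanishing must stabilise at the empty set --- but some such uniformity argument has to be supplied; as written, ``may be checked on stalks'' is false for pro objects. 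Everything downstream of (ii), in particular your proof of (iv), is unaffected once this is fixed.
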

\begin{proof}
(i): By the Mayer--Vietoris long exact descent sequence, and the usual two inductions starting with an affine open cover of $X$ (note that $X$ is quasi-separated, since it is Noetherian), this claim reduces to the case that $X$ is affine, in which case it is Theorem \ref{theorem_AR_properties_in_AQ_homology}(ii).

(ii): Assume $\pi$ is essentially of finite type. This claim also reduces to the affine case, which is the previous lemma, but since this is the only occurrence of pro sheaves, we provide a few more details. Fixing $n\in\bb Z$ and $i\ge0$, let $K_r$ and $C_r$ respectively denote the kernel and cokernel of the canonical map \[\cal D_n^i(X|A,M)\otimes_{\roi_X}\roi_X/\cal I^r\To\cal D_n^i(X|A,M/\cal I^rM).\] Let $\{U_i\}$ be a finite cover of $X$ by affine opens. Then, given $r\ge1$ and applying the previous lemma, there exists $s\ge r$ such that the maps $K_s(U_i)\to K_r(U_i)$ and $C_s(U_i)\to C_r(U_i)$ are zero; since these are quasi-coherent sheaves, it follows that the maps $K_s\to K_r$ and $C_s\to C_r$ are also zero, whence $\{K_r\}_r$ and $\{C_r\}_r$ vanish, completing the proof.

For the rest of the proof we assume that $\pi$ is proper and that $\cal I=I\roi_X$ for some ideal $I\subseteq A$.

(iii): Each $\roi_X$-module $\cal D_n^i(X|A,M)$ is coherent since $X$ is of finite type over $A$. Since $X$ is proper over $A$, this implies that each of the cohomology groups of $\cal D_n^i(X|A,M)$ is a finitely generated $A$-module; so the claim follows from the hypercohomology spectral sequence.

(iv): As explained in the proof of (iii), the hypercohomology spectral sequence \[E_2^{pq}=H^p(X,\cal D_{-q}^i(X|A,M))\Longrightarrow D_{-p-q}^i(X|A,M)\] consists of finitely generated $A$-modules. Since $\{-\otimes_AA/I^r\}_r$ is an exact functor on the category of finitely generated $A$-modules, by Theorem \ref{theorem_Artin_Rees}(ii), we may apply it to the spectral sequence to obtain a spectral sequence of pro $A$-modules \[E_2^{pq}(\infty)=\{H^p(X,\cal D_{-q}^i(X|A,M))\otimes_AA/I^r\}_r\Longrightarrow \{D_{-p-q}^i(X|A,M)\otimes_AA/I^r\}_r.\] This spectral sequence maps to the limit of the hypercohomology spectral sequences for the $\roi_X$-modules $M/\cal I^rM$, namely \['E_2^{pq}(\infty)=\{H^p(X,\cal D_{-q}^i(X|A,M/\cal I^rM))\}_r\Longrightarrow \{D_{-p-q}^i(X|A,M/\cal I^rM)\}_r,\] and so to complete the proof it is enough to show that the canonical maps $E_2^{pq}(\infty)\to {'E_2^{pq}}(\infty)$ are all isomorphisms.

Firstly, since $\cal D_{-q}^i(X|A,M)$ is a coherent sheaf and $\pi$ is proper, Grothendieck's formal functions theorem\footnote{
This theorem is normally stated as the isomorphism $\projlim_r\,H^p(X,N)\otimes_AA/I^r\isoto\projlim_r\,H^p(X,N/\cal I^rN)$ for any coherent $\roi_X$-module $N$. But a quick examination of the proof in EGA shows that the stronger isomorphism of pro $A$-modules $\{H^p(X,N)\otimes_AA/I^r\}_r\isoto\{H^p(X,N/\cal I^rN)\}_r$ also holds.}\cite[Cor.~4.1.7]{EGA_III_I}
states that the canonical map \[E_2^{pq}(\infty)=\{H^p(X,\cal D_{-q}^i(X|A,M))\otimes_AA/I^r\}_r\To \{H^p(X,\cal D_{-q}^i(X|A,M)\otimes_{\roi_X}\roi_X/\cal I^r)\}_r\] is an isomorphism. Secondly, part (ii) implies that the canonical map \[\{H^p(X,\cal D_{-q}^i(X|A,M)\otimes_{\roi_X}\roi_X/\cal I^r)\}_r\To  \{H^p(X,\cal D_{-q}^i(X|A,M/\cal I^rM))\}_r= {'E_2^{pq}}(\infty)\] is also an isomorphism, completing the proof.
\end{proof}

It is part (iv) of the previous theorem which will be so essential in Section \ref{subsection_descent_for_AQ}; a secondary application of it is the following corollaries, which will not be needed but which most closely mimic Grothendieck's formal function theorem for coherent cohomology.

\begin{corollary}\label{corollary_traditional_formal_functions_for_AQ}
Let $A$ be a Noetherian ring, $X$ a proper scheme over $A$ of finite Krull dimension, $I\subseteq A$ an ideal, and $M$ a coherent $\roi_X$-module. Then the canonical map \[\{D_n^i(X|A,M)\otimes_AA/I^r\}_r\To \{D_n^i(rY|A/I^r,M/I^rM)\}_r\] is an isomorphism for all $n\in\bb Z$, $i\ge0$, where $rY=X\times_AA/I^r$.
\end{corollary}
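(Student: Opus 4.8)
The plan is to split the canonical map of pro $A$-modules into a composite of three maps and to treat each factor separately. Write $\cal I:=I\roi_X$, so that $rY=X\times_AA/I^r=\Spec\roi_X/\cal I^r$ and $\cal I^rM=I^rM$, and consider
\[\{D_n^i(X|A,M)\otimes_AA/I^r\}_r\xrightarrow{(a)}\{D_n^i(X|A,M/\cal I^rM)\}_r\xrightarrow{(b)}\{D_n^i(rY|A,M/\cal I^rM)\}_r\xrightarrow{(c)}\{D_n^i(rY|A/I^r,M/I^rM)\}_r,\]
whose composite is the canonical map in question. Here $(a)$ is induced by $M\onto M/\cal I^rM$, using that $D_n^i(X|A,M/\cal I^rM)$ is annihilated by $I^r$ (its defining complex consists of $\roi_X/\cal I^r$-modules); $(b)$ is the pullback along the closed immersion $rY\into X$; and $(c)$ is the transitivity (base-change) map attached to the tower $A\to A/I^r\to\roi_{rY}$. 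Now $(a)$ is an isomorphism of pro $A$-modules by Theorem~\ref{theorem_formal_function_for_AQ}(iv), and $(b)$ is one by Theorem~\ref{theorem_formal_function_for_AQ}(i); so everything comes down to showing that $(c)$ is an isomorphism of pro $A$-modules.

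To handle $(c)$, I would invoke the higher Jacobi--Zariski spectral sequence of Section~\ref{AQ_for_schemes}, applied with ground ring $k=A$, intermediate ring $A/I^r$, scheme $rY$ over $A/I^r$, coefficients $M/I^rM$, and fixed $i\ge0$: it is a bounded spectral sequence $E^1_{pq}(r)\Rightarrow D_{p+q}^i(rY|A,M/I^rM)$, vanishing for $p<-i$ and for $p>0$, whose $p$-th column for $-i\le p\le0$ is in turn computed by a bounded spectral sequence
\[\cal E^2_{\al\beta}(r)=D_\al^{-p}\bigl(A/I^r\,|\,A,\,D_\beta^{i+p}(rY|A/I^r,M/I^rM)\bigr)\Longrightarrow E^1_{p,\al+\beta-p}(r).\]
For $p=0$ this inner spectral sequence has a single nonzero column, since $D_\al^0(A/I^r|A,-)$ is concentrated in degree $\al=0$ where it is the identity; hence already $E^1_{0,q}(r)=D_q^i(rY|A/I^r,M/I^rM)$. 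For $-i\le p\le-1$ we have $-p\ge1$, and the whole column becomes pro-zero once we know that the pro $A$-module $\{D_\al^m(A/I^r|A,Q_r)\}_r$ vanishes for every $m\ge1$, where $Q_r:=D_\beta^{i+p}(rY|A/I^r,M/I^rM)$ is the (varying) compatible system of $A/I^r$-module coefficients that occurs. Granting this, the pro spectral sequence $E^1_{pq}(\infty)$ is concentrated in the single column $p=0$, so its abutment identifies $\{D_n^i(rY|A,M/I^rM)\}_r$ with $\{D_n^i(rY|A/I^r,M/I^rM)\}_r$ via the edge map $(c)$, which is exactly what is wanted.

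The key point, and the step I expect to be the real obstacle, is the pro-vanishing of $\{D_\al^m(A/I^r|A,Q_r)\}_r$ for $m\ge1$ with a \emph{varying} coefficient system $(Q_r)$, since Theorem~\ref{theorem_AR_properties_in_AQ_homology}(i) is stated only for coefficients of the special shape $M/I^rM$. I would deduce it from the universal coefficient spectral sequence
\[E^2_{pq}(r)=\Tor_p^{A/I^r}\bigl(D_q^m(A/I^r|A),\,Q_r\bigr)\Longrightarrow D_{p+q}^m(A/I^r|A,Q_r),\]
which is available because $\bb L_{A/I^r|A}^m$ is degreewise free over $A/I^r$. By Theorem~\ref{theorem_AR_properties_in_AQ_homology}(i), applied with the $A$-module $A$ (so that the coefficient module is $A/I^r$), the pro $A$-module $\{D_q^m(A/I^r|A)\}_r$ vanishes for $m\ge1$; hence for each $r$ there is $s\ge r$ for which the first-slot map $D_q^m(A/I^s|A)\to D_q^m(A/I^r|A)$ is zero, and then functoriality of $\Tor$ in its first variable makes the transition map $E^2_{pq}(s)\to E^2_{pq}(r)$ zero as well. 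Thus $\{E^2_{pq}(r)\}_r=0$ for all $p,q$, and the pro spectral sequence forces $\{D_{p+q}^m(A/I^r|A,Q_r)\}_r=0$, completing the argument. (If one would rather not use the scheme-level Jacobi--Zariski spectral sequence, $(c)$ can first be reduced to the affine case $X=\Spec B$ by the usual Mayer--Vietoris argument together with the two inductions over a finite affine open cover of $X$, and the same reasoning then runs with the affine Jacobi--Zariski spectral sequence of Kassel--Sletsj\o e.)
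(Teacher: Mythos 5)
Your proof is correct and follows essentially the same route as the paper: the same factorisation through $\{D_n^i(X|A,M/I^rM)\}_r$ and $\{D_n^i(rY|A,M/I^rM)\}_r$ via Theorem \ref{theorem_formal_function_for_AQ}(iv) and (i), and then the same higher Jacobi--Zariski spectral sequence for $A\to A/I^r\to\roi_{rY}$, with Theorem \ref{theorem_AR_properties_in_AQ_homology}(i) used to kill the columns with $p\neq0$. The only (harmless) divergence is in how the varying coefficient system $Q_r$ is handled at that last step: you reduce to constant coefficients via a universal-coefficient spectral sequence together with the fact that a zero map in the first slot induces the zero map on $\Tor$, whereas the paper simply factors the transition map $D_\al^{-p}(A/I^{sr}|A,Q_{sr})\to D_\al^{-p}(A/I^r|A,Q_r)$ through the fixed-coefficient group $D_\al^{-p}(A/I^{s}|A,Q_r)$ and applies Theorem \ref{theorem_AR_properties_in_AQ_homology}(i) directly to the ideal $I^r$ and module $Q_r$ --- both arguments are valid.
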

\begin{proof}
Using parts (iv) and (i) of the previous theorem, it remains only to prove that the canonical map $\{D_n^i(rY|A,M/I^rM)\}_r\to \{D_n^i(rY|A/I^r,M/I^rM)\}_r$ is an isomorphism. From the higher Jacobi--Zariski spectral sequence, this follows from the vanishing of $\{D_n^i(A/I^r|A)\}_r$; the details are as follows:

Applying the higher Jacobi--Zariski spectral sequences to the homomorphism $A\to A/I^r$ and $A/I^r$-scheme $rY$, we obtain a bounded spectral sequence of pro $A$-modules $E^1_{pq}(\infty)\Longrightarrow \{D_{p+q}^i(rY|A,M/I^rM)\}_r$ which is supported in the range $-i\le p\le 0$, where it is described by bounded, right-half-plane spectral sequences \[\cal E_{\al\beta}^2(\infty)=\{D_\al^{-p}(A/I^r|A,D_\beta^{i+p}(rY|A/I^r,M/I^rM))\}_r\Longrightarrow E^1_{p,\al+\beta-p}(\infty).\]

For any $r,\al>0$, Theorem \ref{theorem_AR_properties_in_AQ_homology}(i) may be applied to the ideal $I^r$ and module $D_\beta^{i+p}(rY|A/I^r,M/I^rM)$ to deduce that there exists $s>1$ such that the second of the following maps, hence the composition, is zero:
\begin{align*}
D_\al^{-p}(A/I^{sr}|A,D_\beta^{i+p}(srY|A/I^{sr},M/I^{sr}M))
&\To D_\al^{-p}(A/I^s|A,D_\beta^{i+p}(rY|A/I^r,M/I^rM))\\
&\To D_\al^{-p}(A/I^r|A,D_\beta^{i+p}(rY|A/I^r,M/I^rM))
\end{align*}
That is, $\cal E_{\al\beta}^2(\infty)=0$ for $\al\neq0$. Also, $\cal E_{0\beta}^2(\infty)=0$ for $p\neq0$. So the $\cal E$-spectral sequences degenerate to edge map isomorphisms
\[E^1_{p,n-p}(\infty)\cong \begin{cases}0&p\neq0\\D^i_n(rY|A/I^r,M/I^rM)&p=0\end{cases}\] Thus the $E$-spectral sequence degenerates to the desired edge map isomorphisms.
\end{proof}

\begin{corollary}[Formal functions theorem for Andr\'e--Quillen homology]
Under the hypotheses of the previous corollary, the canonical map \[D_n^i(X|A,M)^\comp\To\projlim_rD_n^i(rY|A/I^r,M/I^rM)\] is an isomorphism for all $n\in\bb Z$, $i\ge0$, where $D_n^i(X|A,M)^\comp$ denotes the $I$-adic completion of $D_n^i(X|A,M)$.
\end{corollary}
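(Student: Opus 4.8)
The plan is to obtain this statement by applying the inverse-limit functor $\projlim_r\colon\op{Pro}(A\op{-mod})\to A\op{-mod}$ (the right adjoint recalled in Section~\ref{section_pro_review}) to the pro-isomorphism of Corollary~\ref{corollary_traditional_formal_functions_for_AQ}, after two elementary identifications. First, for any $A$-module $N$ the $I$-adic completion is by definition $N^\comp=\projlim_rN/I^rN=\projlim_r(N\otimes_AA/I^r)$, i.e.\ $N^\comp$ is the value of $\projlim_r$ on the pro-module $\{N\otimes_AA/I^r\}_r$. Second, for each $r$ the base-change map $D_n^i(X|A,M)\to D_n^i(rY|A/I^r,M/I^rM)$ has target an $A/I^r$-module ---since $rY=X\times_AA/I^r$ is a scheme over $A/I^r$ and $D_n^i(rY|A/I^r,-)$ is computed from a resolution of $\roi_{rY}$ by free $A/I^r$-algebras--- hence is annihilated by $I^r$, so it factors canonically through $D_n^i(X|A,M)\otimes_AA/I^r$.

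As $r$ varies these factorisations assemble into precisely the morphism of pro $A$-modules
\[\{D_n^i(X|A,M)\otimes_AA/I^r\}_r\To\{D_n^i(rY|A/I^r,M/I^rM)\}_r\]
appearing in the previous corollary, and applying $\projlim_r$ to it yields ---using the first identification above for the source--- exactly the canonical map $D_n^i(X|A,M)^\comp\to\projlim_rD_n^i(rY|A/I^r,M/I^rM)$ of the statement. Since Corollary~\ref{corollary_traditional_formal_functions_for_AQ} tells us that the displayed pro-morphism is an isomorphism in $\op{Pro}(A\op{-mod})$, and $\projlim_r$, being a functor, carries isomorphisms to isomorphisms, we conclude that this canonical map is an isomorphism for all $n\in\bb Z$, $i\ge0$.

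There is essentially no obstacle here: the corollary is a purely formal consequence of the pro-statement. The only point deserving a moment's care ---and it is routine--- is the verification that $\projlim_r$ of the canonical pro-morphism really is the canonical map in the statement; this reduces to the observation, made above, that each component of the base-change map factors through the relevant $A/I^r$-quotient because its target is already an $A/I^r$-module.
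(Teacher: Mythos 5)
Your proposal is correct and is exactly the paper's argument: the paper's entire proof reads ``Take $\projlim_r$ of the pro isomorphism of the previous corollary,'' and your additional verification that $\projlim_r$ of that pro-morphism is the canonical map of the statement is a sound (if routine) elaboration of the same step.
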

\begin{proof}
Take $\projlim_r$ of the pro isomorphism of the previous corollary.
\end{proof}

\begin{remark}
Let $A$ be a Noetherian ring, $X$ a proper scheme over $A$ of finite Krull dimension, and $I\subseteq A$ an ideal with respect to which $A$ is assumed to be adically complete. Applying the previous corollary to $M=\roi_X$ and noting that $D_n^i(X|A)$ is a finitely generated $A$-module, hence already $I$-adically complete, we obtain \[D_n^i(X|A)\Isoto\projlim_rD_n^i(rY|A/I^r).\]
\end{remark}

\subsection{Pro descent for AQ homology with respect to blow-up squares}\label{subsection_descent_for_AQ}
With the formal function theorems for Andr\'e--Quillen homology established, we may now proved that Andr\'e--Quillen homology satisfies the desired pro Mayer--Vietoris property with respect to abstract blow-up squares. We begin by developing further the results of Theorem \ref{theorem_formal_function_for_AQ} in the case that $\pi$ is a an isomorphism away from $V(I)$:

\begin{lemma}\label{lemma_AQ_of_birational}
Let $k\to A$ be a homomorphism of Noetherian rings, $I\subseteq A$ an ideal, and $X$ a proper scheme over $A$ of finite Krull dimension such that the induced morphism $X\setminus V(I\roi_X)\to\Spec A\setminus V(I)$ is an isomorphism. Then the following canonical maps are isomorphisms for all $n\in\bb Z$, $i\ge0$:
\begin{enumerate}
\item $\{D_n^i(X|A,I^r\roi_X)\}_r\To\{I^rD_n^i(X|A)\}_r\stackrel{(\ast)}{\cong}0$ \\(vanishing $(\ast)$ not necessarily valid if $i=n=0$).
\item $\{D_n^i(A|k,I^r)\}\To\{D_n^i(X|k,I^r\roi_X)\}_r$.
\end{enumerate}
\end{lemma}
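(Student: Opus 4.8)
The plan is to establish (i) directly and then deduce (ii) from it by means of the higher Jacobi--Zariski spectral sequence. For the isomorphism in (i) I would apply the Andr\'e--Quillen homology long exact sequence to the short exact sequence of coherent $\roi_X$-modules $0\to I^r\roi_X\to\roi_X\to\roi_X/I^r\roi_X\to0$ and pass to the limit over $r$ in the abelian category $\op{Pro}(A\op{-}mod)$. By Theorem~\ref{theorem_formal_function_for_AQ}(iv), applied with $M=\roi_X$, the resulting pro-map $D_n^i(X|A)\to\{D_n^i(X|A,\roi_X/I^r\roi_X)\}_r$ is identified with the canonical epimorphism $D_n^i(X|A)\to\{D_n^i(X|A)\otimes_AA/I^r\}_r$, whose kernel is $\{I^rD_n^i(X|A)\}_r$. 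Being an epimorphism, it forces the connecting maps in the limiting long exact sequence to vanish, so one extracts short exact sequences $0\to\{D_n^i(X|A,I^r\roi_X)\}_r\to D_n^i(X|A)\to\{D_n^i(X|A,\roi_X/I^r\roi_X)\}_r\to0$; comparing with the previous identification shows that the canonical map $\{D_n^i(X|A,I^r\roi_X)\}_r\to\{I^rD_n^i(X|A)\}_r$ is an isomorphism.

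For the vanishing $(\ast)$ I would use that $\pi$ is an isomorphism over $\Spec A\setminus V(I)$. When $i\ge1$, Andr\'e--Quillen homology of rings commutes with localisation and the relative cotangent complex of a localisation vanishes, so each coherent sheaf $\cal D_m^i(X|A)$ is supported on $V(I\roi_X)$ and hence killed by a power of $I$; only finitely many of these sheaves enter any fixed $D_n^i(X|A)$ through the bounded fourth-quadrant hypercohomology spectral sequence, so $D_n^i(X|A)$ is killed by a power of $I$ and $\{I^rD_n^i(X|A)\}_r=0$. When $i=0$ we have $D_n^0(X|A)=H^{-n}(X,\roi_X)=\Gamma(\Spec A,R^{-n}\pi_*\roi_X)$, and for $n\ne0$ the coherent sheaf $R^{-n}\pi_*\roi_X$ is supported on $V(I)$, giving the same conclusion; for $i=n=0$ one is left only with $\{I^rH^0(X,\roi_X)\}_r$, which need not vanish (e.g.\ when $\pi$ is a finite birational morphism onto a non-normal scheme), which is the stated exception.

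For (ii) I would feed (i) into the higher Jacobi--Zariski spectral sequence for $k\to A$, the $A$-scheme $X$, and the coefficient sheaf $M=I^r\roi_X$, and pass to $\op{Pro}(A\op{-}mod)$. By (i) the pro-module $\{D_\beta^j(X|A,I^r\roi_X)\}_r$ vanishes unless $j=\beta=0$, where it equals $\{I^rH^0(X,\roi_X)\}_r$; since the functors $D_\alpha^{-p}(A|k,-)$ preserve vanishing of pro-modules, the limiting (bounded) spectral sequence is concentrated in the single column $p=-i$, whose inner spectral sequence is concentrated in the single row $\beta=0$. All differentials therefore vanish, yielding $\{D_n^i(X|k,I^r\roi_X)\}_r\cong\{D_n^i(A|k,H^0(X,I^r\roi_X))\}_r$, naturally in $X$; it then remains to show that the canonical map $\{I^r\}_r\to\{H^0(X,I^r\roi_X)\}_r$ is an isomorphism of pro-$A$-modules, after which applying the functor $D_n^i(A|k,-)$ finishes the proof.

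This last point is where I expect the main work to lie, and it is the one place where properness of $\pi$ is used beyond what (i) required. The algebra $A':=H^0(X,\roi_X)$ is finite over $A$ and $\phi\colon A\to A'$ is an isomorphism over $\Spec A\setminus V(I)$, so $\ker\phi$ and $\op{coker}\phi$ are finitely generated $A$-modules killed by a fixed power $I^M$. Left-exactness of global sections on the above short exact sequence identifies $H^0(X,I^r\roi_X)$ with $\ker\big(A'\to H^0(X,\roi_X/I^r\roi_X)\big)$, and Grothendieck's formal functions theorem, in the pro form used in the proof of Theorem~\ref{theorem_formal_function_for_AQ}(iv), identifies $\{H^0(X,\roi_X/I^r\roi_X)\}_r$ with $\{A'/I^rA'\}_r$ compatibly with the maps out of $A'$; taking kernels in $\op{Pro}(A\op{-}mod)$ gives $\{H^0(X,I^r\roi_X)\}_r\cong\{I^rA'\}_r$. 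Finally $I^MA'\subseteq\Im\phi$, so the classical Artin--Rees lemma yields, for $r\gg0$, inclusions $\phi(I^r)\subseteq I^rA'\subseteq\phi(I^{r-M})$ in which $\phi\colon I^r\to\phi(I^r)$ is an isomorphism for $r\gg0$ (as $I^r\cap\ker\phi=0$ eventually); hence $\{I^rA'\}_r\cong\{I^r\}_r$ via the canonical maps, as needed.
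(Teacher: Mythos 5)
Your proposal is correct and follows essentially the same route as the paper: part (i) via the long exact sequence for $0\to I^r\roi_X\to\roi_X\to\roi_X/I^r\roi_X\to0$ combined with Theorem \ref{theorem_formal_function_for_AQ}(iv) and a support argument for the vanishing, and part (ii) via the collapse of the pro higher Jacobi--Zariski spectral sequence followed by the identification $\{I^r\}_r\cong\{I^rH^0(X,\roi_X)\}_r$. The only (immaterial) differences are in the endgame of (ii), where the paper invokes part (i) at $i=n=0$ and a four-term exact sequence with Theorem \ref{theorem_Artin_Rees}(ii) in place of your direct formal-functions identification of $\{H^0(X,I^r\roi_X)\}_r$ and the intertwining $\phi(I^r)\subseteq I^rA'\subseteq\phi(I^{r-M})$.
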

\begin{proof}
(i): The short exact sequences $0\to I^r\roi_X\to \roi_X\to \roi_X/I^r\roi_X\to 0$ induce a long exact sequence of pro $A$-modules \[\cdots\To \{D_n^i(X|A,I^r\roi_X)\}_r\To D_n^i(X|A)\To\{D_n^i(X|A,\roi_X/I\roi_X)\}_r\To\cdots\] According to Theorem \ref{theorem_formal_function_for_AQ}(iv), $\{D_n^i(X|A,\roi_X/I\roi_X)\}_r\cong\{D_n^i(X|A)\otimes_AA/I^r\}_r$, from which it follows that the long exact sequences breaks into short sequences and induces the desired isomorphisms $\{D_n^i(X|A,I^r\roi_X)\}_r\isoto\{I^rD_n^i(X|A)\}_r$.

Regarding the vanishing claim, suppose first that $i=0$; then $D_n^0(X|A)=H^{-n}(X,\roi_X)$, which is a finitely generated $A$-module (since $X$ is proper over $A$) which is supported on $V(I)$ when $n\neq0$ (since $X\setminus V(I\roi_X)\cong\Spec A\setminus V(I)$), and hence is killed by a power of $I$. Next suppose $i>0$; in this case the vanishing claim merely requires that $X$ be essentially of finite type over $A$; induction on the size of an affine open cover of $X$ reduces us to proving that if $B$ is an essentially finite type $A$-algebra then $D_n^i(B|A)$ is killed by a power of $I$. But $D_n^i(B|A)$ is a finitely generated $B$-module supported on $V(IB)$, since Andr\'e--Quillen homology behaves well under localisation; thus it is killed by a power of $I$, completing the proof of all the vanishing claims.

(ii): The higher Jacobi--Zariski spectral sequence states that for each $r\ge 1$ there is a natural, third quadrant, bounded, spectral sequence \[E_{pq}^1(r)\Longrightarrow D_{p+q}^i(X|k,I^r\roi_X)\] which vanishes outside $-i\le p\le 0$, and whose columns in the range $-i\le p\le 0$ are described by natural, first quadrant spectral sequences \[\cal E_{\al\beta}^2(r)=D_\al^{-p}(A|k,D_\beta^{i+p}(X|A,I^r\roi_X))\Longrightarrow E_{p,\al+\beta-p}^1(r).\] According to part (i), $\{\cal E_{\al\beta}^2(r)\}_r=0$ unless $\beta=0$ and $p=-i$, whence the limit of the spectral sequences collapse to edge map isomorphisms \[\{D_n^i(A|k,D_0^0(X|A,I^r\roi_X))\}_r\isoto\{D_n^i(X|k,I^r\roi_X)\}_r.\]

Since $\{D_0^0(X|A,I^r\roi_X)\}_r\cong \{I^rD_0^0(X|A)\}_r=\{I^rH^0(X,\roi_X)\}_r$ by part (i) and the usual description of Andr\'e--Quillen homology when $i=0$, we will have completed the proof as soon as we show that the canonical map \[\{I^r\}_r\To\{I^rH^0(X,\roi_X)\}_r\] is an isomorphism of pro $A$-modules. Write $B:=H^0(X,\roi_X)$, which is a finite $A$-algebra with the property that $A_\frak p \isoto B\otimes_AA_\frak p$ for each prime ideal $\frak p\subseteq A$ not containing $I$; hence the kernel $K$ and cokernel $C$ of the structure map $A\to B$ are killed by a power of $I$. Now consider the following commutative diagram of pro $A$-modules:
\[\xymatrix{
0 \ar[r] & K \ar[r]\ar[d]^{(1)} & A \ar[r]\ar[d]^{(2)} & B \ar[r]\ar[d]^{(3)} & C \ar[r] \ar[d]^{(4)}& 0 \\
0 \ar[r] & \{K\otimes_AA/I^r\}_r \ar[r] & \{A/I^r\}_r \ar[r] & \{B\otimes_AA/I^r\}_r \ar[r] & \{C\otimes_AA/I^r\}_r \ar[r] & 0
}\]
The top row is an exact sequence of finitely generated $A$-modules, whence the bottom row is an exact sequence of pro $A$-modules by Theorem \ref{theorem_Artin_Rees}(ii). Since $K$ and $C$ are killed by a power of $I$, maps (1) and (4) are isomorphisms, and so the central square in the diagram is bicartesian. It follows that maps (2) and (3) have isomorphic kernels; i.e., $\{I^r\}_r\isoto\{I^rB\}_r$, as required.
\end{proof}

\begin{corollary}\label{corollary_AQ_of_birational}
Let $k$ be a Noetherian ring, let $X'\to X$ be a proper morphism between Noetherian $k$-schemes of finite Krull dimension, and suppose that $\cal I\subseteq\roi_X$ is an ideal sheaf such that the induced morphism $X'\setminus V(\cal I\roi_{X'})\to X\setminus V(\cal I)$ is an isomorphism. Then the canonical map \[\{D_n^i(X|k,\cal I^r)\}_r\To\{D_n^i(X'|k,\cal I^r\roi_{X'})\}_r\] is an isomorphism for all $n\in\bb Z$, $i\ge0$.
\end{corollary}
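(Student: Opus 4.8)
The plan is to reduce the sheaf-theoretic statement on $X$ to the affine situation on the base, where Lemma \ref{lemma_AQ_of_birational} applies directly. First I would run the usual two inductions: using the Mayer--Vietoris long exact descent sequence for Andr\'e--Quillen homology (which is exact in $\op{Pro}(k\op-mod)$ by the limit-exactness of inverse systems of exact sequences, and which holds for the coefficient modules $\cal I^r$ since these form a compatible system), an induction on the size of a finite affine open cover of $X$ reduces us to the case $X=\Spec A$ affine. Note that $X$ is quasi-separated, being Noetherian, so the intersections appearing are again quasi-compact and the induction goes through; one must only check that the relevant pro-modules $\{D_n^i(U|k,\cal I^r|_U)\}_r$ are the restrictions of the global ones, which is immediate from the construction of $\cal D_n^i$ and the hypercohomology spectral sequence.

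Once $X=\Spec A$, the morphism $X'\to X$ becomes a proper morphism $X'\to\Spec A$ with $X'$ Noetherian of finite Krull dimension, and the hypothesis says exactly that $X'\setminus V(I\roi_{X'})\to\Spec A\setminus V(I)$ is an isomorphism, where $I\subseteq A$ is the ideal defining $\cal I$ on the affine $X$. This is precisely the setup of Lemma \ref{lemma_AQ_of_birational}. That lemma gives, for all $n\in\bb Z$ and $i\ge0$, an isomorphism of pro $A$-modules
\[\{D_n^i(A|k,I^r)\}_r\Isoto\{D_n^i(X'|k,I^r\roi_{X'})\}_r,\]
which is the map in the corollary once we identify $\{D_n^i(X|k,\cal I^r)\}_r=\{D_n^i(A|k,I^r)\}_r$ by agreement on affines. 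So in the affine case the result is a literal restatement of Lemma \ref{lemma_AQ_of_birational}(ii).

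The main obstacle is bookkeeping rather than mathematics: one has to verify that the canonical maps really are compatible across the reduction, i.e.\ that the Mayer--Vietoris sequences for $X$ and for $X'$ (or rather for $U$ and its preimage $U\times_X X'$) map to one another, so that the inductive step compares like with like. Concretely, for an affine open $U\subseteq X$ the square obtained by restricting $X'\to X$ over $U$ again satisfies the hypothesis of Lemma \ref{lemma_AQ_of_birational} (the locus of isomorphism only shrinks), and the five lemma in $\op{Pro}(k\op-mod)$ — valid since $\op{Pro}(k\op-mod)$ is abelian — then propagates the isomorphism from the pieces of the cover to $X$. Care is needed because $D_n^i$ can be non-zero in negative degrees, but the hypercohomology spectral sequence is bounded (fourth quadrant, by the finite Krull dimension hypothesis), so all the pro spectral sequence and long exact sequence manipulations are legitimate. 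With these compatibilities in hand the corollary follows.
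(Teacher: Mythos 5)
Your proposal is correct and follows essentially the same route as the paper: reduce by the usual two inductions over a finite affine open cover of $X$ (using the Mayer--Vietoris sequence in $\op{Pro}(k\op{-}mod)$ and the five lemma) to the case $X=\Spec A$, where the statement is exactly Lemma \ref{lemma_AQ_of_birational}(ii) with $X$ renamed $X'$. The compatibility checks you flag (restriction of the hypothesis to affine opens, matching Mayer--Vietoris sequences for $X$ and its preimage) are precisely the details the paper leaves implicit.
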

\begin{proof}
By the usual two inductions starting with an affine open cover of $X$ (since $X$ is Noetherian, hence quasi-separated) we may assume that $X=\Spec A$ is affine, in which case the statement is precisely part (ii) of the previous lemma (with $X$ rechristened $X'$).
\end{proof}

We are now equipped to prove our main theorem about Andr\'e--Quillen homology of schemes, namely that it satisfies the pro Mayer--Vietoris property with respect to abstract blow-up squares. Recall that an {\em abstract blow-up square} of schemes is a pull-back diagram \xysquare{Y'}{X'}{Y}{X}{->}{->}{->}{->} where $X'\to X$ is proper, $Y\to X$ is a closed embedding, and the induced map on the open subschemes $X'\setminus Y'\to X\setminus Y$ is an isomorphism.

To state our descent result we need the following piece of notation which we did not introduce in Remark \ref{remark_hypercohomology} when discussing hypercohomology: if $M^\blob$ is a cochain complex of sheaves, and $M^\blob\quis\cal F^\blob$ is a quasi-isomorphism to a cochain complex of acyclic sheaves, then set \[\bb H(X,M^\blob):=\cal F^\blob(X),\] which is well-defined up to quasi-isomorphism and satisfies $H^*(\bb H(X,M^\blob))=\bb H^*(X,M^\blob)$ by definition.

The following theorem is true more generally with a coherent module $M$ in place of $\roi_X$, but we omit it to simplify the notation:

\begin{theorem}[Pro descent for AQ homology wrt.~abstract blow-ups]\label{theorem_pro_cdh_for_AQ}
Let $k$ be a Noetherian ring, and let
\[\xymatrix{
Y'\ar[d]\ar[r] & X'\ar[d]\\
Y\ar[r] & X
}\]
be an abstract blow-up square of Noetherian, finite Krull dimensional $k$-schemes. Then the following square of cochain complexes of pro $k$-modules is homotopy cartesian
\xysquare{\bb H(X,\bb L_{X|k,\sub{\rm neg}}^i)}{\bb H(X',\bb L_{X'|k,\sub{\rm neg}}^i)}{\{\bb H(rY,\bb L_{rY|k,\sub{\rm neg}}^i)\}_r}{\{\bb H(rY',\bb L_{rY'|k,\sub{\rm neg}}^i)\}_r}{->}{->}{->}{->}
resulting in a long exact, Mayer--Vietoris sequence of pro $k$-modules \[\cdots\To D_n^i(X|k)\To\{D_n^i(rY|k)\}_r\oplus\{D_n^i(X'|k)\}_r\To D_n^i\{(rY'|k)\}_r\To\cdots\]
\end{theorem}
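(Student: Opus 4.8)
The plan is to reduce the homotopy cartesian assertion to a statement about the ideal-sheaf coefficients, where Corollary~\ref{corollary_AQ_of_birational} applies directly. Write $\cal I\subseteq\roi_X$ for the ideal sheaf defining $Y$, so that $rY=\Spec\roi_X/\cal I^r$; similarly $\cal I':=\cal I\roi_{X'}$ defines a closed subscheme whose $r^\sub{th}$ thickening I will compare with $Y'$. (A preliminary point worth isolating: since the square is a pull-back, $Y'=X'\times_XY=\Spec\roi_{X'}/\cal I\roi_{X'}$, so $rY'$ as defined via the ideal of $Y'$ is $\Spec\roi_{X'}/(\cal I\roi_{X'})^r$; in general this need not coincide with $\Spec\roi_{X'}/\cal I^r\roi_{X'}$ on the nose, but the two pro-systems $\{(\cal I\roi_{X'})^r\}_r$ and $\{\cal I^r\roi_{X'}\}_r$ are cofinal in each other, so they define isomorphic pro objects and I may work with either. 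I will use $\{\cal I^r\roi_{X'}\}_r$ throughout, noting this identification once at the start.)

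First I would fit the four cochain complexes of pro $k$-modules into the evident commutative square and form the total fibre; by general nonsense the square is homotopy cartesian if and only if this total fibre is acyclic, equivalently if and only if the map of vertical homotopy fibres is a quasi-isomorphism. So it suffices to show that the canonical map of pro complexes
\[
\op{fib}\!\bigl(\bb H(X,\bb L_{X|k,\sub{\rm neg}}^i)\to\{\bb H(rY,\bb L_{rY|k,\sub{\rm neg}}^i)\}_r\bigr)\To\op{fib}\!\bigl(\bb H(X',\bb L_{X'|k,\sub{\rm neg}}^i)\to\{\bb H(rY',\bb L_{rY'|k,\sub{\rm neg}}^i)\}_r\bigr)
\]
is a quasi-isomorphism. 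Now I would identify these homotopy fibres. Theorem~\ref{theorem_formal_function_for_AQ}(i) gives $\{D_n^i(X|A,\roi_X/\cal I^rM)\}_r\cong\{D_n^i(rY|A,\cdot)\}_r$ — more precisely, applied with $k$ in place of $A$ and $M=\roi_X$, it says the map $\{D_n^i(X|k,\roi_X/\cal I^r)\}_r\to\{D_n^i(rY|k)\}_r$ is an isomorphism; at the level of complexes this means the pro complex $\{\bb H(rY,\bb L_{rY|k,\sub{\rm neg}}^i)\}_r$ is quasi-isomorphic to $\{\bb H(X,\bb L_{X|k,\sub{\rm neg}}^i\otimes_{\roi_X}\roi_X/\cal I^r)\}_r$. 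Hence the vertical homotopy fibre on the left is quasi-isomorphic to $\{\bb H(X,\bb L_{X|k,\sub{\rm neg}}^i\otimes_{\roi_X}\cal I^r)\}_r$, i.e.\ to the pro complex computing $\{D_n^i(X|k,\cal I^r)\}_r$; likewise the right-hand fibre computes $\{D_n^i(X'|k,\cal I^r\roi_{X'})\}_r$. (To run this cleanly at the level of complexes rather than just homology I would, as is standard, replace each $\bb L^i$ by a suitable degreewise-flat or acyclic model so that $-\otimes_{\roi_X}\cal I^r$ is already the derived tensor, which is legitimate since $\cal I^r$ is coherent and we are only ever taking the pro-system — the Artin--Rees input of Theorem~\ref{theorem_Artin_Rees} ensures the higher Tor pro-systems vanish; this is precisely the mechanism already used in the proof of Theorem~\ref{theorem_formal_function_for_AQ}.) Finally, the induced map between these two fibre complexes is exactly the map inducing $\{D_n^i(X|k,\cal I^r)\}_r\to\{D_n^i(X'|k,\cal I^r\roi_{X'})\}_r$, which is an isomorphism for all $n$ by Corollary~\ref{corollary_AQ_of_birational} applied to the proper morphism $X'\to X$ and ideal sheaf $\cal I$ (whose hypothesis — that $X'\setminus V(\cal I\roi_{X'})\to X\setminus V(\cal I)$ is an isomorphism — is part of the definition of an abstract blow-up square). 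Since a map of bounded complexes of pro $k$-modules inducing an isomorphism on all pro cohomology groups is a quasi-isomorphism, this proves the square is homotopy cartesian.

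The long exact Mayer--Vietoris sequence is then the standard consequence: the homotopy cartesian square yields a distinguished triangle
\[
\bb H(X,\bb L_{X|k,\sub{\rm neg}}^i)\To\{\bb H(rY,\bb L_{rY|k,\sub{\rm neg}}^i)\}_r\oplus\bb H(X',\bb L_{X'|k,\sub{\rm neg}}^i)\To\{\bb H(rY',\bb L_{rY'|k,\sub{\rm neg}}^i)\}_r\To[+1]
\]
of complexes of pro $k$-modules, and taking cohomology (recalling $H^{-n}$ of these complexes is the relevant $D_n^i$, that the category of pro $k$-modules is abelian so long exact sequences make sense there, and that all the spectral sequences in sight are bounded, so no convergence issue arises) gives exactly the displayed sequence \[\cdots\To D_n^i(X|k)\To\{D_n^i(rY|k)\}_r\oplus\{D_n^i(X'|k)\}_r\To \{D_n^i(rY'|k)\}_r\To\cdots.\]

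The main obstacle is bookkeeping rather than mathematics: the one genuinely delicate point is working at the level of honest complexes (so that ``homotopy cartesian'' is literally true, not merely true after passing to homology), which requires choosing the models $\bb H(-,\bb L^i_{-|k,\sub{\rm neg}})$ compatibly across the square, handling the fact that $\bb L^i$ is unbounded below (hence the restriction to finite Krull dimension and the Cartan--Eilenberg hypercohomology of Remark~\ref{remark_hypercohomology}), and checking that $-\otimes_{\roi_X}\cal I^r$ behaves as the derived tensor on the relevant pro-system — all of which are controlled by the Artin--Rees vanishing already established. A secondary nuisance is the reconciliation of $\{(\cal I\roi_{X'})^r\}_r$ with $\{\cal I^r\roi_{X'}\}_r$ noted above, but this is a one-line cofinality remark. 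Everything substantive has been packaged into Theorem~\ref{theorem_formal_function_for_AQ} and Corollary~\ref{corollary_AQ_of_birational}, so the proof is essentially an assembly of those two results via the fibre-sequence formalism.
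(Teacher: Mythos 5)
Your proposal is correct and follows essentially the same route as the paper: replace the bottom row via Theorem~\ref{theorem_formal_function_for_AQ}(i), identify the vertical homotopy fibres with the complexes computing $\{D_n^i(X|k,\cal I^r)\}_r$ and $\{D_n^i(X'|k,\cal I^r\roi_{X'})\}_r$ (which works on the nose since $\bb L^i$ is degreewise flat), and conclude by Corollary~\ref{corollary_AQ_of_birational}. The only quibble is your ``cofinality'' caveat: $(\cal I\roi_{X'})^r=\cal I^r\roi_{X'}$ holds exactly, so that preliminary point is vacuous, though harmless.
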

\begin{proof}
According to Theorem \ref{theorem_formal_function_for_AQ}(i), the cohomology of the complexes in the bottom row of the diagram are unchanged if we replace them by \[\{\bb H(X,\bb L^i_{X|k,\sub{neg}}\otimes_{\roi_X}\roi_X/\cal I^r)\}_r\To\{\bb H(X',\bb L^i_{X'|k,\sub{neg}}\otimes_{\roi_{X'}}\roi_{X'}/\cal I^r\roi_{X'})\}_r,\] where $\cal I$ is the sheaf of ideals defining $Y$. Having made this replacement, we must show that the homotopy fibres of the two resulting vertical arrows
\xysquare{\bb H(X,\bb L_{X|k,\sub{neg}}^i)}{\bb H(X',\bb L_{X'|k,\sub{neg}}^i)}{\{\bb H(X,\bb L^i_{X|k,\sub{neg}}\otimes_{\roi_X}\roi_X/\cal I^r)\}_r}{\{\bb H(X',\bb L^i_{X'|k,\sub{neg}}\otimes_{\roi_{X'}}\roi_{X'}/\cal I^r\roi_{X'}\}_r)}{}{->}{->}{}
are quasi-isomorphic. These homotopy fibres are respectively \[\{\bb H(X,\bb L^i_{X|k,\sub{neg}}\otimes_{\roi_X}\cal I^r)\}_r\quad\mbox{and}\quad\{\bb H(X',\bb L^i_{X'|k,\sub{neg}}\otimes_{\roi_{X'}}\cal I^r\roi_{X'})\}_r,\] between which the canonical map is indeed a quasi-isomorphism since $\{D_n^i(X|k,\cal I^r)\}_r\isoto\{D_n^i(X'|k,\cal I^r\roi_{X'})\}_r$ for all $n\in\bb Z$, $i\ge0$, by Corollary \ref{corollary_AQ_of_birational}.
\end{proof}

\section{Formal function properties and pro descent for $\HH$, $\HC$, $\THH$, $\TC$, and $K$-theory}\label{section_pro_h_for_K}
Now we extend the main results of Section \ref{section_pro_h_for_AQ} to Hochschild and cyclic homology of schemes, and their topological counterparts, and then deduce that $K$-theory also satisfies the pro Mayer--Vietoris property with respect to abstract blow-up squares of varieties.

\subsection{Hochschild and cyclic homology}\label{subsection_HH_and_HC_of_schemes}
Here we discuss the theories of derived Hochschild and cyclic homology for schemes which will concern us.

Let $k\to A$ be a homomorphism of commutative rings. Given an $A$-module $M$, we let  $H_*^{\sub{naive},k}(A,M)$  denote the ``usual'' Hochschild homology of $A$ as a $k$-algebra with coefficients in $M$; it is the homology of the Hochschild complex $C_\bullet^k(A,M)$. In particular, $\HH_*^{\sub{naive},k}(A)=H_*^{\sub{naive},k}(A,A)$ denotes the usual Hochschild homology of $A$ as a $k$-algebra. However, we will work throughout with the derived version of Hochschild homology, for which we use the notation $H_*^k(A,M)$. That is, letting $P_\bullet\to A$ be a simplicial resolution of $A$ by free $k$-algebras, $H_*^k(A,M)$ is defined to be the homology (of the diagonal) of the bisimplicial $A$-module \[C_q^k(P_p,M)=M\otimes_kP_p^{\otimes_k q}.\tag{$p,q\ge 0$}\] In the special case $A=M$ we write $\HH_*^k(A)=H_*^k(A,A)$.

Next we discuss cyclic homology. Firstly, $\HC_*^{\sub{naive},k}(A)$ denotes the usual cyclic homology of the $k$-algebra $A$, which is the homology of the cyclic bicomplex $CC_{\bullet\bullet}^k(A)$. Just as for Hochschild homology we prefer to denote by $\HC_*^k(A)$ the derived version, defined as the homology of (the diagonal of) the simplicial tricomplex $CC_{\bullet\bullet}^k(P_\bullet)$, where $P_\bullet\to A$ is a simplicial resolution of $A$ by free $k$-algebras. The usual SBI sequence remains valid in the derived setting: \[\cdots\To \HH_n^k(A)\stackrel{I}{\To} \HC_n^k(A)\stackrel{S}{\To} \HC_{n-2}^k(A)\stackrel{B}{\To}\cdots\]

Next consider a Noetherian scheme $X$ of finite Krull dimension over $k$. As in Section \ref{AQ_for_schemes}, let $\tilde P_\bullet^k(X)$ denote the simplicial sheaf of $k$-algebras obtained by degree-wise sheafifying $U\mapsto P_\bullet(\roi_X(U))$, where $P_\bullet(\roi_X(U))\to\roi_X(U)$ is a functorially chosen simplicial resolution by free $k$-algebras. Given a quasi-coherent $\roi_X$-module $M$, set \[C_\bullet^k(X,M):=M\otimes_k\tilde P_\bullet^k(X)^{\otimes_k\bullet}\] and let $C_\sub{neg}^k(X,M)$ be the cochain complex of quasi-coherent $\roi_X$-modules, supported in negative degrees, obtained by negating the numbering of the chain complex of $C_\bullet^k(X,M)$. Define the derived Hochschild homology of $X$, relative to $k$, with coefficients in $M$ to be its hypercohomology \[H_*^k(X,M):=\bb H^{-*}(X,C_\sub{neg}^k(X,M)).\] In the special case $M=\roi_X$ we write $C_\blob^k(X)=C_\blob^k(X,\roi_X)$ and $\HH_*^k(X)=H_*^k(X,\roi_X)$. The obvious analogues of the ``Agreement on affines'', ``Mayer--Vietoris property'', ``Hypercohomology spectral sequence'', and ``Homology l.e.s.'' stated for Andr\'e--Quillen homology in Section \ref{AQ_for_schemes} are true for this derived Hochschild homology of schemes.

We omit the analogous construction of derived cyclic homology for schemes since the procedure should now be clear. As in the affine case, the usual SBI sequence remains valid \[\cdots\To \HH_n^k(X)\stackrel{I}{\To} \HC_n^k(X)\stackrel{S}{\To} \HC_{n-2}^k(X)\stackrel{B}{\To}\cdots\]

The relationships between Andr\'e--Quillen homology, derived Hochschild/cyclic homology, and usual Hochschild/cyclic homology are summarised by the next lemma:

\begin{lemma}
Let $k$ be a ring, $X$ a Noetherian scheme of finite Krull dimension over $k$, and $M$ a quasi-coherent $\roi_X$-module. Then:
\begin{enumerate}
\item If $X$ is flat over $k$ then the canonical maps \[H_n^k(X,M)\to H_n^{\sub{naive},k}(X,M)\quad\mbox{and}\quad  \HC_n^k(X)\to \HC_n^{\sub{naive},k}(X)\] are isomorphisms for all $n\in\bb Z$, where $H^{\sub{naive},k}$ and $\HC^{\sub{naive},k}$ denote the ``usual'' Hochschild and cyclic homology of a scheme, as constructed in, e.g., \cite{Weibel1996}.
\item There is a bounded spectral sequence of $k$-modules \[E^2_{pq}=D_p^q(X|k,M)\Longrightarrow H_{p+q}^k(X,M).\]
\end{enumerate}
\end{lemma}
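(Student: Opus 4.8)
The plan is to reduce both statements to classical facts about algebras and then globalise by the usual ``two inductions over a finite affine open cover'', using the Mayer--Vietoris property together with the hypercohomology and Deligne-type spectral sequences collected in Remark~\ref{remark_hypercohomology}.

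For (i): suppose $X$ is flat over $k$, so that $A=\roi_X(U)$ is flat over $k$ for every affine open $U=\Spec A\sseq X$. Choose a simplicial resolution $P_\blob\to A$ by free $k$-algebras. Each $P_p$ is $k$-flat, and $P_\blob$ is weakly equivalent to the constant simplicial module $A$, so for each fixed $q$ the simplicial $k$-module $P_\blob^{\otimes_k q}$ is a degreewise-flat resolution of $A^{\otimes_k q}$, which is $k$-flat by hypothesis; hence $M\otimes_k P_\blob^{\otimes_k q}\simeq M\otimes_k A^{\otimes_k q}$. Equivalently, in the bisimplicial $A$-module $C_q^k(P_p,M)$ the homology in the $p$-direction is concentrated in degree $0$, so the spectral sequence of this bisimplicial module obtained by first taking $p$-homology collapses and shows that the canonical map $H_n^k(A,M)\To H_n^{\sub{naive},k}(A,M)$ is an isomorphism. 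Sheafifying this comparison of complexes of $\roi_X$-modules (it may be checked on the members of a finite affine open cover, $X$ being Noetherian) and passing to hypercohomology gives the claim for $H_n^k(X,M)$. The cyclic case follows by applying the same comparison to the (derived versus naive) cyclic bicomplexes, whose associated spectral sequences agree on the $E^1$-page, namely Hochschild homology, by what has just been proved.

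For (ii): in the affine case this is the classical Hodge ($\lambda$-)filtration of Hochschild homology: the derived Hochschild complex $C_\blob^k(P_\blob,M)$ carries a natural, exhaustive, degreewise-finite filtration whose $q$-th graded piece is quasi-isomorphic, up to a shift by $q$, to $\bb L_{A|k}^q\otimes_A M$, yielding a bounded spectral sequence $D_p^q(A|k,M)\Longrightarrow H_{p+q}^k(A,M)$ (see \cite{Morrow_pro_h_unitality}; this is the Hochschild counterpart of the filtration of $\bb L_{X|k}^i$ appearing in the higher Jacobi--Zariski spectral sequence above). I would then sheafify this filtration to obtain a filtration of the cochain complex of sheaves $C_\sub{neg}^k(X,M)$ whose $q$-th graded piece $\op{gr}^q C_\sub{neg}^k(X,M)$ is quasi-isomorphic, up to a shift by $q$, to $\bb L_{X|k,\sub{neg}}^q\otimes_{\roi_X}M$, and apply the Deligne-type spectral sequence for a filtered complex of sheaves from Remark~\ref{remark_hypercohomology}; unwinding the definition $D_n^i(X|k,M)=\bb H^{-n}(X,\bb L_{X|k,\sub{neg}}^i\otimes_{\roi_X}M)$, this reads $D_p^q(X|k,M)\Longrightarrow H_{p+q}^k(X,M)$. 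Boundedness is ensured by $X$ having finite Krull dimension together with the vanishing $D_p^q(X|k,M)=0$ for $p<-\dim X$, which leaves only finitely many exterior-power degrees $q$ contributing to each total degree.

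The part requiring genuine care is the globalisation of (ii): the variation of Deligne's spectral sequence quoted in Remark~\ref{remark_hypercohomology} is stated for a filtration with $F^q M^\blob=0$ for $q\gg0$, whereas the Hodge filtration on the (negated) Hochschild complex is only exhaustive and finite in each individual cohomological degree (the submodule $F^q$ is supported in degrees $\le -q$ but is nonzero for every $q$). This is not a real obstruction---to compute $\bb H^{-n}(X,-)$ one may pass to the quotient complex killing the pieces with $q>n+\dim X$, or argue one cohomological degree at a time, the finiteness of $\dim X$ guaranteeing convergence---but, together with the bookkeeping of shifts when moving between the homological Hochschild complex and its negated cochain version, it is the only step of the argument that is not purely formal. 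Everything else is the standard reduction to the affine case via an open cover combined with Remark~\ref{remark_hypercohomology}, exactly as in the proofs of the Andr\'e--Quillen analogues in Section~\ref{section_pro_h_for_AQ}.
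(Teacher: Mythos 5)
Your proposal is correct and follows the same route the paper takes, which is simply to invoke the affine case (the flat-base-change collapse of the bisimplicial complex for (i) and the Hodge filtration of the Hochschild complex with graded pieces $\bb L^q_{A|k}\otimes_AM[q]$ for (ii), as in the companion paper's Lemma 4.1) and then globalise via the hypercohomology machinery of Remark \ref{remark_hypercohomology}. Your extra observation about truncating the Hodge filtration in each cohomological degree, using $D_p^q(X|k,M)=0$ for $p<-\dim X$, correctly handles the only point the paper leaves implicit in declaring the scheme case ``clear''.
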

\begin{proof}
These are relatively standard results; the affine case may be found in \cite[Lem.~4.1]{Morrow_pro_h_unitality}, and the proofs for a scheme are then clear.
\end{proof}

The following formal function results for Hochschild homology are the analogues of those given for Andr\'e--Quillen homology in Section \ref{section_pro_h_for_AQ}.

\begin{theorem}\label{theorem_formal_function_for_HH}
Let $k\to A$ be a homomorphism of Noetherian rings, and $\pi:X\to\Spec A$ a Noetherian scheme over $A$ of finite Krull dimension; let $\cal I\subseteq\roi_X$ be an ideal sheaf and $M$ a coherent $\roi_X$-module. Then:
\begin{enumerate}
\item The canonical map $\{H_n^A(X,M/\cal I^rM)\}_r\To\{H_n^A(rY,M/\cal I^rM)\}_r$ is an isomorphism for all $n\in\bb Z$.
\item Assuming that $\pi$ is essentially of finite type, the canonical map of pro sheaves $\{\cal H_n^A(X,M)\otimes_{\roi_X}\roi_X/\cal I^r\}_r\To\{\cal H_n^A(X,M/\cal I^rM)\}_r$ is an isomorphism for all $n\in\bb Z$.
\end{enumerate}
Now suppose further that $\pi$ is proper and $\cal I=I\roi_X$ for some ideal $I\subseteq A$. Then
\begin{enumerate}\setcounter{enumi}{2}
\item $H_n^A(X,M)$ is a finitely generated $A$-module for all $n\in\bb Z$,
\end{enumerate}
and the following canonical maps are isomorphisms for all $n\in\bb Z$:
\begin{enumerate}\setcounter{enumi}{3}
\item $\{H_n^A(X,M)\otimes_AA/I^r\}_r\To\{H_n^A(X,M/\cal I^rM)\}_r$.
\item $\{H_n^A(X,M)\otimes_AA/I^r\}_r\To\{H_n^{A/I^r}(rY,M/\cal I^rM)\}_r$.
\end{enumerate}
Now suppose still further that the induced morphism $X\setminus V(I\roi_X)\to\Spec A\setminus V(I)$ is an isomorphism. Then the following canonical maps are isomorphisms for all $n\in\bb Z$:
\begin{enumerate}\setcounter{enumi}{5}
\item $\{H_n^A(X,I^r\roi_X)\}_r\To\{I^r\HH_n^A(X)\}_r\stackrel{(\ast)}{\cong}0$\\(vanishing $(\ast)$ not necessarily valid if $n=~0$).
\item $\{H_n^k(A,I^r)\}_r\To\{H_n^k(X,I^r\roi_X))\}_r$.
\end{enumerate}
\end{theorem}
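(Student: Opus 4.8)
The plan is to deduce every part of the theorem from the corresponding statement already established for Andr\'e--Quillen homology in Section~\ref{section_pro_h_for_AQ}, using as the only bridge the spectral sequence $E^2_{pq}=D_p^q(X|k,M)\Longrightarrow H_{p+q}^k(X,M)$ of the preceding lemma, its sheafified version $\cal E^2_{pq}=\cal D_p^q(X|k,M)\Longrightarrow\cal H_{p+q}^k(X,M)$ on $X$, and the hypercohomology spectral sequence $E_2^{pq}=H^p(X,\cal H_{-q}^A(X,M))\Longrightarrow H_{-p-q}^A(X,M)$. The mechanism is that all of these are bounded, with bounds independent of the coefficient module --- for a fixed total degree $n$ only the finitely many columns $-\dim X\le p\le n$ contribute --- so for any inverse system of coefficients $\{M_r\}_r$ the ``$r\to\infty$'' limit is a convergent spectral sequence of pro modules abutting to $\{H_n^k(X,M_r)\}_r$, in the sense of Section~\ref{section_pro_review}; consequently a morphism of such pro spectral sequences which is an isomorphism on $E^2$-pages is an isomorphism on abutments. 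With this in hand parts (i)--(iv) and (vii) are near-formal, part (v) needs one genuinely new input, and part (vi) needs a geometric vanishing argument transported from the Andr\'e--Quillen case.

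For (i) one runs the spectral sequence over $A$ with coefficients $M/\cal I^rM$ on both $X$ and $rY$, passes to the limit, and observes that the comparison map (induced by the closed immersion $rY\hookrightarrow X$) is an isomorphism on $E^2$-pages by Theorem~\ref{theorem_formal_function_for_AQ}(i). For (iii), when $\pi$ is proper every entry $D_p^q(X|A,M)$ of the spectral sequence is a finitely generated $A$-module by Theorem~\ref{theorem_formal_function_for_AQ}(iii), hence so is the bounded abutment $H_n^A(X,M)$. For (ii) one applies $\{-\otimes_{\roi_X}\roi_X/\cal I^r\}_r$, which is exact on coherent sheaves by Theorem~\ref{theorem_Artin_Rees}, term by term to the bounded spectral sequence of coherent sheaves $\cal D_p^q(X|A,M)\Longrightarrow\cal H_{p+q}^A(X,M)$, maps the result to the limit of the spectral sequences for $M/\cal I^rM$, and uses Theorem~\ref{theorem_formal_function_for_AQ}(ii) on $E^2$-pages; part (iv) is identical, with $\{-\otimes_AA/I^r\}_r$ in place of $\{-\otimes_{\roi_X}\roi_X/\cal I^r\}_r$ (exact on finitely generated $A$-modules by Theorem~\ref{theorem_Artin_Rees}, which applies thanks to (iii)) and Theorem~\ref{theorem_formal_function_for_AQ}(iv) on $E^2$-pages.

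Part (v) is obtained by composing the isomorphisms of (iv) and (i) with a base-change isomorphism $\{H_n^A(rY,M/\cal I^rM)\}_r\isoto\{H_n^{A/I^r}(rY,M/\cal I^rM)\}_r$. To produce the latter, run the spectral sequence of the preceding lemma over the base $A$ and over the base $A/I^r$ --- both abutting to the Hochschild homology of the $A/I^r$-scheme $rY$ with the same coefficients --- and pass to the limit; the comparison is an isomorphism on $E^2$-pages because $\{D_p^q(rY|A,M/\cal I^rM)\}_r\isoto\{D_p^q(rY|A/I^r,M/\cal I^rM)\}_r$, which is precisely the base-change pro-isomorphism for Andr\'e--Quillen homology proved inside the argument for Corollary~\ref{corollary_traditional_formal_functions_for_AQ} (higher Jacobi--Zariski spectral sequence together with Theorem~\ref{theorem_AR_properties_in_AQ_homology}(i)). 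The genuinely new point, and the step I expect to be the main obstacle, is that this requires the spectral sequence $E^2_{pq}=D_p^q(X|k,M)\Longrightarrow H_{p+q}^k(X,M)$ to be natural with respect to a change of base ring, i.e.\ for $k\to k'$ and $X$ a $k'$-scheme; this is built into its construction via the Hochschild--Kostant--Rosenberg filtration of the Hochschild complex, but it must be checked carefully.

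For (vi) one mimics Lemma~\ref{lemma_AQ_of_birational}(i): the short exact sequences $0\to I^r\roi_X\to\roi_X\to\roi_X/I^r\roi_X\to0$ give, in the limit, a long exact homology sequence of pro $A$-modules whose middle term is the constant object $\HH_n^A(X)$; by (iv) its outer term is $\{\HH_n^A(X)\otimes_AA/I^r\}_r$ and the map onto it is the surjective completion map, so the sequence splits into short exact sequences and yields $\{H_n^A(X,I^r\roi_X)\}_r\isoto\{I^r\HH_n^A(X)\}_r$. The vanishing for $n\ne0$ reduces, via (iii), to $\HH_n^A(X)$ being killed by a power of $I$, and for this --- exactly as in the cited lemma --- one uses the isomorphism $X\setminus V(I\roi_X)\isoto\Spec A\setminus V(I)$ and the fact that $\HH_*^A(A)$ is concentrated in degree $0$ (and localises) to see that $\cal H_0^A(X)=\roi_X$, that the coherent sheaves $\cal H_m^A(X)$ are supported on $V(I\roi_X)$ for $m\ne0$, and --- using properness of $\pi$ --- that $H^p(X,\roi_X)$ is killed by a power of $I$ for $p\ne0$; the hypercohomology spectral sequence then gives the claim. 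Finally (vii) is handled as in Corollary~\ref{corollary_AQ_of_birational}: run the spectral sequence of the preceding lemma over $k$ with coefficients $I^r\roi_X$ on $X$ and $I^r$ on $\Spec A$, pass to the limit, and apply Lemma~\ref{lemma_AQ_of_birational}(ii) on $E^2$-pages. Apart from the base-change naturality noted above, the only thing to watch throughout is that every spectral sequence in sight stays uniformly bounded, so that its $r\to\infty$ limit genuinely computes the limit of the abutments.
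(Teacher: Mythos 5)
Your proposal is correct and is essentially an expanded version of the paper's own proof, which simply states that all seven claims reduce to the corresponding Andr\'e--Quillen statements (Theorem \ref{theorem_formal_function_for_AQ}, Corollary \ref{corollary_traditional_formal_functions_for_AQ}, Lemma \ref{lemma_AQ_of_birational}) via the bounded spectral sequence $E^2_{pq}=D_p^q(X|k,M)\Rightarrow H_{p+q}^k(X,M)$ of the preceding lemma. The points you flag --- uniform boundedness so that the $r\to\infty$ limit of spectral sequences is legitimate, and naturality of the spectral sequence in the base ring for part (v) --- are exactly the details the paper leaves implicit, and your treatment of them is sound.
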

\begin{proof}
Thanks to the Andr\'e--Quillen to Hochschild homology spectral sequence for schemes (part (ii) of the previous lemma), these claims immediately reduce to the analogous assertions for Andr\'e--Quillen homology, which we have already proved. For the sake of reference: (i)--(iv) correspond to Theorem \ref{theorem_formal_function_for_AQ}; (v) to Corollary \ref{corollary_traditional_formal_functions_for_AQ}; and (vi)--(vii) to Lemma \ref{lemma_AQ_of_birational}.
\end{proof}

Given a closed embedding $Y\into X$ of Noetherian schemes of finite Krull dimension, we will write $\HH_*^k(X,Y)$, $\HC_*^k(X,Y)$ for the relative Hochschild and cyclic homology groups, fitting into long exact sequences
\begin{align*}
\cdots\To \HH_n^k(X,Y)\To \HH_n^k(X)\To \HH_n^k(Y)\To\cdots\\
\cdots\To \HC_n^k(X,Y)\To \HC_n^k(X)\To \HC_n^k(Y)\To\cdots
\end{align*}
Now we can prove the analogue of Theorem \ref{theorem_pro_cdh_for_AQ} for $\HH$ and $\HC$, namely that they also satisfy the pro Mayer--Vietoris property for abstract blow-up squares:

\begin{theorem}[Pro descent for $\HH$ and $HC$ wrt.~abstract blow-ups]\label{theorem_pro_cdh_for_HH}
Let $k$ be a Noetherian ring, and let
\[\xymatrix{
Y'\ar[d]\ar[r] & X'\ar[d]\\
Y\ar[r] & X
}\]
be an abstract blow-up square of Noetherian, finite Krull dimensional $k$-schemes. Then the canonical maps \[\{\HH_n^k(X,rY)\}_r\To \{\HH_n^k(X',rY')\}_r,\quad\quad \{\HC_n^k(X,rY)\}_r\To \{\HC_n^k(X',rY')\}_r\] are isomorphisms of pro abelian groups for all $n\in\bb Z$.
\end{theorem}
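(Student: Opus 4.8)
The plan is to follow the proof of Theorem~\ref{theorem_pro_cdh_for_AQ} almost verbatim for Hochschild homology, using the formal function properties of Theorem~\ref{theorem_formal_function_for_HH} in place of those for Andr\'e--Quillen homology, and then to pass from $\HH$ to $\HC$ by a standard argument with the SBI sequences. For the Hochschild claim I would show that the square of cochain complexes of pro $k$-modules
\xysquare{\bb H(X,C_\sub{neg}^k(X))}{\bb H(X',C_\sub{neg}^k(X'))}{\{\bb H(rY,C_\sub{neg}^k(rY))\}_r}{\{\bb H(rY',C_\sub{neg}^k(rY'))\}_r}{->}{->}{->}{->}
is homotopy cartesian, the isomorphism $\{\HH_n^k(X,rY)\}_r\isoto\{\HH_n^k(X',rY')\}_r$ then following by taking the long exact sequences of the two horizontal homotopy fibres. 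To prove that the square is homotopy cartesian I argue as in Theorem~\ref{theorem_pro_cdh_for_AQ}: writing $\cal I\subseteq\roi_X$ for the ideal sheaf defining $Y$, Theorem~\ref{theorem_formal_function_for_HH}(i) lets us replace the bottom row, without changing its pro-cohomology, by $\{\bb H(X,C_\sub{neg}^k(X)\otimes_{\roi_X}\roi_X/\cal I^r)\}_r\To\{\bb H(X',C_\sub{neg}^k(X')\otimes_{\roi_{X'}}\roi_{X'}/\cal I^r\roi_{X'})\}_r$; since $C_\sub{neg}^k(-)$ is a complex of flat $\roi$-modules, the short exact sequences $0\to\cal I^r\to\roi_X\to\roi_X/\cal I^r\to0$ remain exact after tensoring with it, and the resulting short exact sequences of complexes identify the vertical homotopy fibres with $\{\bb H(X,C_\sub{neg}^k(X)\otimes_{\roi_X}\cal I^r)\}_r$ and $\{\bb H(X',C_\sub{neg}^k(X')\otimes_{\roi_{X'}}\cal I^r\roi_{X'})\}_r$. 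So it suffices to see that the canonical map $\{H_n^k(X,\cal I^r)\}_r\To\{H_n^k(X',\cal I^r\roi_{X'})\}_r$ is an isomorphism for all $n\in\bb Z$.

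This last statement is the Hochschild analogue of Corollary~\ref{corollary_AQ_of_birational}, and I would prove it in one of two ways. One option is to run the usual two inductions over a finite affine open cover of $X$ --- using the Mayer--Vietoris sequence for $\HH$ of schemes and the five lemma --- to reduce to the case $X=\Spec A$ affine, where the assertion is exactly Theorem~\ref{theorem_formal_function_for_HH}(vii) applied to the proper $A$-scheme $X'$. Alternatively, one can pass to pro objects (in $r$) in the bounded Andr\'e--Quillen-to-Hochschild spectral sequence $E^2_{pq}=D_p^q(X|k,\cal I^r)\Longrightarrow H_{p+q}^k(X,\cal I^r)$ and its analogue for $X'$ --- legitimate because for each total degree the supporting terms lie in a range depending only on that degree and $\dim X$, hence bounded uniformly in $r$ --- and then invoke Corollary~\ref{corollary_AQ_of_birational}, which gives $\{D_p^q(X|k,\cal I^r)\}_r\isoto\{D_p^q(X'|k,\cal I^r\roi_{X'})\}_r$ for all $p,q$.

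For the cyclic homology claim I would deduce it from the Hochschild one using the natural relative SBI sequences
\[\cdots\To\HH_n^k(X,rY)\To\HC_n^k(X,rY)\stackrel{S}{\To}\HC_{n-2}^k(X,rY)\To\HH_{n-1}^k(X,rY)\To\cdots\]
for $(X,rY)$ and $(X',rY')$. These assemble into a morphism of long exact sequences of pro abelian groups; letting $F_n$ be the pro abelian group making $\cdots\To\{\HC_n^k(X,rY)\}_r\To\{\HC_n^k(X',rY')\}_r\To F_n\To\{\HC_{n-1}^k(X,rY)\}_r\To\cdots$ exact (i.e.\ the homology of the cone of the comparison map of relative cyclic complexes), the fact that the comparison map is an isomorphism on all relative Hochschild pro-groups forces $S$ to induce isomorphisms $F_n\isoto F_{n-2}$ for all $n$. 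On the other hand the hypercohomology spectral sequence for derived cyclic homology, together with $\dim rY\le\dim X$, shows that $\HC_n^k(X,rY)$ and $\HC_n^k(X',rY')$ vanish whenever $n<-\dim X-1$, uniformly in $r$; hence $F_n=0$ for $n$ sufficiently negative, and $2$-periodicity then gives $F_n=0$ for all $n$. This is the desired isomorphism $\{\HC_n^k(X,rY)\}_r\isoto\{\HC_n^k(X',rY')\}_r$.

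The real content having already been packaged into the formal function theorems, I do not expect any single step to be difficult; the points that need care are the globalisation of Theorem~\ref{theorem_formal_function_for_HH}(vii) from an affine base to the base $X$, and --- in the cyclic homology step --- the uniform-in-$r$ boundedness below of the relative cyclic homology groups, since it is precisely this that lets the $2$-periodicity of the $F_n$ force their vanishing.
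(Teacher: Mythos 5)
Your proposal is correct and follows essentially the same route as the paper: reduce the $\HH$ statement to homotopy cartesianness of the square of hypercohomology complexes, use Theorem \ref{theorem_formal_function_for_HH}(i) to rewrite the bottom row, identify the vertical homotopy fibres with the $\cal I^r$-coefficient complexes, and conclude via the globalised form of part (vii) (your first option --- induction over an affine cover --- is exactly what the paper does), then pass to $\HC$ by the SBI sequences using the uniform vanishing in degrees below $-\max\{\dim X,\dim X'\}$ to start the induction. Your cone-and-two-periodicity phrasing of the last step is just a repackaging of the paper's five-lemma induction.
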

\begin{proof}
The claim for $\HH$ is equivalent to the statement that the following square of cochain complexes of pro $k$-modules is homotopy cartesian, where $\bb H$ is the hypercohomology replacement functor immediately proceeding Theorem \ref{theorem_pro_cdh_for_AQ}:
\xysquare{\bb H(X,C^k_\sub{neg}(X))}{\bb H(X',C^k_\sub{neg}(X'))}{\{\bb H(rY,C^k_\sub{neg}(rY))\}_r}{\{\bb H(rY',C^k_\sub{neg}(rY'))\}_r}{->}{->}{->}{->} This is proved exactly as it was for Andr\'e--Quillen homology in Theorem \ref{theorem_pro_cdh_for_AQ}, using the appropriate parts of the previous theorem. More precisely, by part (i) of the previous theorem we may replace the bottom of the diagram by \[\{\bb H(X,C^k_\sub{neg}(X,\roi_X/\cal I^r))\}_r\To\{\bb H(X,C^k_\sub{neg}(X',\roi_{X'}/\cal I^r\roi_{X'}))\}_r\] without changing the cohomology of the complexes, where $\cal I$ is the sheaf of ideals defining $Y$. Then the map between the homotopy fibres of the vertical arrows becomes \[\{\bb H(X,C^k_\sub{neg}(X,\cal I^r))\}_r\To\{\bb H(X,C^k_\sub{neg}(X',\cal I^r\roi_{X'}))\}_r,\] which is a quasi-isomorphism by part (vii) of the previous theorem (to be precise, by the $\HH$ analogue of Corollary \ref{corollary_AQ_of_birational}, which follows from part (vii) of the previous lemma by induction on the size of an affine open cover of $X$). This proves the claim for $\HH$.

To pass to $\HC$, use the five lemma and induction up the limit of the SBI sequences
\[\xymatrix{
\cdots\ar[r] & \HH_n^k(X,rY)\ar[r]\ar[d] & \HC_n^k(X,rY)\ar[r]\ar[d] & \HC_{n-2}^k(X,rY)\ar[r]\ar[d] & \cdots \\
\cdots\ar[r] & \HH_n^k(X',rY')\ar[r] & \HC_n^k(X',rY')\ar[r] & \HC_{n-2}^k(X,rY)\ar[r] & \cdots
}\]
(notice it is possible to start the induction since the SBI sequences vanishes in degrees $<-\max\{\dim X,\dim X'\}$).
\end{proof}

\subsection{Topological Hochschild and cyclic homology}\label{subsection_HH_and_HC_of_schemes}
The analogues for topological Hochschild and cyclic homology of the formal functions properties given in Theorem \ref{theorem_formal_function_for_HH} were recently proved in joint work with B.~Dundas \cite{Morrow_Dundas}; from this we will quickly prove that these theories also satisfy the pro Mayer--Vietoris property with respect to abstract blow-up squares.

We will assume that the reader is familiar with topological Hochschild and cyclic homology, and so only offer a brief summary. For a quasi-compact, quasi-separated scheme $X$, the spectra $\THH(X)$, $\TR^m(X;p)$, and $\TC^m(X;p)$ were defined by Geisser and Hesselholt in \cite{GeisserHesselholt1999} in such a way that all these presheaves of non-connective spectra satisfy Zariski descent (see the proof of \cite[Corol.~3.3.3]{GeisserHesselholt1999}). They were shown in \cite{Morrow_Dundas} to have reasonable finiteness and continuity properties under the assumption of F-finiteness, where we fix a prime number $p$:

\begin{definition}\label{definition_F-finite}
A $\bb Z_{(p)}$-algebra is said to be {\em F-finite} if and only if the $\bb F_p$-algebra $A/pA$ is finitely generated over its subring of $p^\sub{th}$-powers. A $\bb Z_{(p)}$-scheme is said to be F-finite if and only if it admits a finite open cover by the spectra of F-finite $\bb Z_{(p)}$-algebras.
\end{definition}

In particular, given a Noetherian, F-finite $\bb Z_{(p)}$-algebra $A$, the ring of truncated $p$-typical Witt vectors $W_m(A)$ is again a Noetherian, F-finite $\bb Z_{(p)}$-algebra; see \cite[\S2]{Morrow_Dundas} for a detailed discussion of such issues. Recall that $\TR^m_n(A;p)$ is naturally a $W_m(A)$-algebra.

The following establishes that topological Hochschild homology and the related theories all have the pro Mayer--Vietoris property for abstract blow-up squares:

\begin{theorem}[Pro descent for $\THH$, $\TR^m$, $\TC^m$ wrt. abstract blow-ups]\label{theorem_pro_cdh_for_THH}
Let
\[\xymatrix{
Y'\ar[d]\ar[r] & X'\ar[d]\\
Y\ar[r] & X
}\]
be an abstract blow-up square of Noetherian, F-finite, finite Krull dimensional $\bb Z_{(p)}$-schemes. Then the canonical maps
\begin{align*}
\{\THH_n(X,rY;\bb Z/p^v)\}_r&\To \{\THH_n(X',rY';\bb Z/p^v)\}_r\\
\{\TR_n^m(X,rY;p,\bb Z/p^v)\}_r&\To \{\TR_n^m(X',rY';p,\bb Z/p^v)\}_r\\
\{\TC_n^m(X,rY;p,\bb Z/p^v)\}_r&\To \{\TC_n^m(X',rY';p,\bb Z/p^v)\}_r
\end{align*}
are isomorphisms of pro abelian groups for all $n\in\bb Z$ and $m,v\ge1$.
\end{theorem}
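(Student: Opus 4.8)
The plan is to deduce the $\THH$ statement by transcribing the proof of Theorem~\ref{theorem_pro_cdh_for_HH} (equivalently of Theorem~\ref{theorem_pro_cdh_for_AQ}), with the formal function properties of Theorem~\ref{theorem_formal_function_for_HH} replaced by their topological analogues from \cite{Morrow_Dundas}, and then to obtain the $\TR^m$ and $\TC^m$ statements from the $\THH$ one by induction on $m$ using the standard structural (co)fibre sequences of topological cyclic homology. First the reductions. For $F$ any of the three theories appearing in the statement (all with $\bb Z/p^v$-coefficients) and any closed immersion $Z\into W$, write $F(W,rZ):=\op{hofib}(F(W)\to F(rZ))$. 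Since $\THH$, $\TR^m$ and $\TC^m$ satisfy Zariski descent, and since $\dim(rY)=\dim Y$ and $\dim(rY')=\dim Y'$ are independent of $r$, the pro-spectra occurring are uniformly bounded below, and the assertion to be proved for $F$ is equivalent to the statement that the map $\{F(X,rY)\}_r\to\{F(X',rY')\}_r$ is a pro-weak-equivalence, i.e.\ a pro-isomorphism on $\pi_n$ for every $n$. Note that $X'$, $rY$ and $rY'$ are again Noetherian, F-finite, finite Krull dimensional $\bb Z_{(p)}$-schemes, as these properties pass to closed subschemes and to schemes of finite type over such a scheme, so the results of \cite{Morrow_Dundas} apply to all four corners.

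\emph{The $\THH$ case.} The continuity and formal-function results of \cite{Morrow_Dundas}, which hold precisely under the F-finiteness hypothesis imposed here, furnish the topological analogues of parts (i) and (vii) of Theorem~\ref{theorem_formal_function_for_HH}. Writing $\cal I$ for the ideal sheaf of $Y$ and $\THH(X;N;\bb Z/p^v)$ for $\THH$ of $X$ with coefficients in a quasi-coherent $\roi_X$-module $N$, these say: the canonical pro-map $\{\THH(X;\roi_X/\cal I^r;\bb Z/p^v)\}_r\to\{\THH(rY;\bb Z/p^v)\}_r$ is a pro-weak-equivalence; and, when moreover $X'\to X$ is proper and an isomorphism over $X\setminus V(\cal I)$, the canonical map $\{\THH(X;\cal I^r;\bb Z/p^v)\}_r\to\{\THH(X';\cal I^r\roi_{X'};\bb Z/p^v)\}_r$ is a pro-weak-equivalence. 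Granting these, I would run the proof of Theorem~\ref{theorem_pro_cdh_for_HH} word for word: the first statement lets one replace the bottom row of the square by the $\roi/\cal I^r$-coefficient version without changing homotopy groups; the homotopy fibres of the two resulting vertical maps are then the $\cal I^r$-coefficient presheaves $\{\THH(X;\cal I^r;\bb Z/p^v)\}_r$ and $\{\THH(X';\cal I^r\roi_{X'};\bb Z/p^v)\}_r$; and the second statement makes the comparison between these fibres a pro-weak-equivalence. Hence the $\THH$ square is homotopy cartesian.

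\emph{Bootstrapping to $\TR^m$ and $\TC^m$.} Argue by induction on $m\ge1$, the case $m=1$ being $\TR^1=\THH$. For the inductive step use Hesselholt--Madsen's fundamental cofibre sequence $\THH(-)_{hC_{p^{m-1}}}\to\TR^m(-;p)\xrightarrow{R}\TR^{m-1}(-;p)$, which is natural in the scheme and compatible with Zariski descent, hence relativises (homotopy orbits being exact) to a natural cofibre sequence of relative spectra with $\bb Z/p^v$-coefficients. Its $\TR^{m-1}$-term is a pro-weak-equivalence between the $X$- and $X'$-sides by the inductive hypothesis, and its homotopy-orbit term is one too, because $(-)_{hC_{p^{m-1}}}$ carries a pro-weak-equivalence of uniformly bounded below pro-spectra to a pro-weak-equivalence: the homotopy-orbit spectral sequence $E^2_{st}=H_s(C_{p^{m-1}};\THH_t(X,rY;\bb Z/p^v))\Rightarrow\THH_{s+t}(X,rY;\bb Z/p^v)_{hC_{p^{m-1}}}$ has, in each fixed total degree, only finitely many nonzero entries (the $t$-index is uniformly bounded below, although $s$ ranges over all of $\bb Z_{\ge0}$), it is functorial in $r$, and the map it induces on $E^2$-pages is a pro-isomorphism since $H_s(C_{p^{m-1}};-)$ preserves pro-isomorphisms. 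The pro five-lemma applied to the long exact sequence of the relativised cofibre sequence then gives the $\TR^m$ statement. Finally $\TC^m(-;p)=\op{hofib}\big(\TR^m(-;p)\xrightarrow{R-F}\TR^{m-1}(-;p)\big)$, so one further application of the pro five-lemma to the associated long exact sequence of relative groups yields the $\TC^m$ statement. (If \cite{Morrow_Dundas} in fact supplies the $\TR^m$ and $\TC^m$ analogues of Theorem~\ref{theorem_formal_function_for_HH} directly, one may instead repeat the $\THH$ argument verbatim for each of them.)

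The one substantive step is the $\THH$ case, i.e.\ the reliance on \cite{Morrow_Dundas}: one must verify that its continuity/formal-function machinery produces the two inputs above in exactly the generality needed (F-finite Noetherian base, coherent and $\roi_X$-module coefficients, mod-$p^v$ coefficients to guarantee finiteness of homotopy groups) and that the hypercohomology formalism of Section~\ref{AQ_for_schemes} transfers to the cyclotomic setting. Everything afterwards is formal; the only minor subtlety is the pro-convergence of the homotopy-orbit spectral sequence, which is guaranteed by the uniform lower bound $\dim(rY)=\dim Y$.
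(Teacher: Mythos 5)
The reduction you propose runs in the opposite direction to the paper's: since $\THH=\TR^1$ and $\TC^m=\holim(\TR^m\xrightarrow{1-F}\TR^{m-1})$, the paper proves the statement directly for $\TR^m$, uniformly in $m$, and gets $\THH$ and $\TC^m$ for free, whereas you prove $\THH$ and bootstrap upwards via the fundamental cofibre sequence $\THH(-)_{hC_{p^{m-1}}}\to\TR^m\to\TR^{m-1}$. Your bootstrap is a workable alternative (the homotopy-orbit spectral sequence converges pro-wise for the reason you give), but it is moot because the gap lies in the base case.

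The gap: you propose to run the proof of Theorem \ref{theorem_pro_cdh_for_HH} ``word for word'', which requires a theory $\THH(X;N;\bb Z/p^v)$ with coefficients in a quasi-coherent $\roi_X$-module $N$ that (a) is exact in $N$, so that the homotopy fibre of $\THH(X)\to\THH(X;\roi_X/\cal I^r)$ is identified with $\THH(X;\cal I^r)$, and (b) satisfies the analogues of Theorem \ref{theorem_formal_function_for_HH}(i) and (vii). For Hochschild homology this works because $M\mapsto M\otimes_kP_\bullet^{\otimes_k\bullet}$ is an exact functor of the coefficient module; there is no analogous exact linear coefficient functor for $\THH$ (one smashes over the sphere spectrum, not over $k$), and \cite{Morrow_Dundas} does not supply inputs of the shape you assume. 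What it supplies is a module-level continuity statement, $\{\TR^m_n(A;p,\bb Z/p^v)\otimes_{W_m(A)}W_m(A/I^r)\}_r\cong\{\TR^m_n(A/I^r;p,\bb Z/p^v)\}_r$ and likewise for $X'$, together with finite generation of $\TR^m_n$ over $W_m(A)$. Turning this into the theorem requires an argument genuinely different from the $\HH$ case: after Zariski descent reduces to $X=\Spec A$, the continuity statement shows the horizontal maps in the square are surjective on pro homotopy groups with kernels $\{W_m(I^r)\,\TR^m_n(A;\ldots)\}_r$ and $\{W_m(I^r)\,\TR^m_n(X';\ldots)\}_r$, and one must show the induced map of kernels is an isomorphism. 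The paper does this by Witt-vector commutative algebra: $\TR^m_n$ localises well, so $M:=\TR^m_n(A;\ldots)\to N:=\TR^m_n(X';\ldots)$ becomes an isomorphism after $-\otimes_{W_m(A)}W_m(A_f)$ for $f\in I$; the $\Spec W_m(A_f)$ cover $\Spec W_m(A)\setminus V(W_m(I))$, so $\ker$ and $\op{coker}$ of $M\to N$ are killed by a power of $W_m(I)$, whence $\{W_m(I)^rM\}_r\cong\{W_m(I)^rN\}_r$ as in Lemma \ref{lemma_AQ_of_birational}; finally the chains $W_m(I)^r$ and $W_m(I^r)$ are intertwined. None of this appears in your proposal, and it is the substantive content of the proof.
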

\begin{proof}
Since $\THH=\TR^1$ and since $\TC^m=\holim(\TR^m\xto{1-F}\TR^{m-1})$, it is sufficient to prove the claim for $\TR^m$. By Zariski descent we may assume that $X=\Spec A$ is affine.

That is, $A$ is a Noetherian, $F$-finite $\bb Z_{(p)}$-algebra, $I\subseteq A$ is an ideal, $\pi:X'\to \Spec A$ is a proper morphism which induces an isomorphism $X'\setminus V(I\roi_X)\to\Spec A\setminus V(I)$, and we must prove that the square of pro spectra
\[\xymatrix{
\TR^m(A;p,\bb Z/p^v) \ar[r]\ar[d] & \{\TR^m(A/I^r;p,\bb Z/p^v)\}_r\ar[d]\\
\TR^m(X';p,\bb Z/p^v) \ar[r] & \{\TR^m(X'\times_AA/I^r;p,\bb Z/p^v)\}_r
}\tag{\dag}\]
is homotopy cartesian. By \cite[Thm.~5.6]{Morrow_Dundas}, the hypotheses imply that the canonical maps
\begin{align*}
\{\TR_n^m(A;p,\bb Z/p^v)\otimes_{W_m(A)}W_m(A/I^r)\}_r&\To\{\TR_n^m(A/I^r;p,\bb Z/p^v)\}_r\\
\{\TR_n^m(X;p,\bb Z/p^v)\otimes_{W_m(A)}W_m(A/I^r)\}_r&\To\{\TR_n^m(X\times_AA/I^r;p,\bb Z/p^v)\}_r
\end{align*}
are isomorphisms of pro $W_m(A)$-modules for all $n\in\bb Z$. Hence the horizontal arrows in (\dag) induce surjections at the level of the homotopy groups, and so it is sufficient to show that the induced vertical map on the kernels, namely \[\{W_m(I^r)\,\TR_n^m(A;p,\bb Z/p^v)\}_r\To \{W_m(I^r)\,\TR_n^m(X;p,\bb Z/p^v)\}_r,\] is an isomorphism for all $n\in\bb Z$. The rest of the proof is devoted to proving this isomorphism.

Firstly, since $\TR_n^m$ behaves well under localisation (see Lem.~3.1 of \cite{Morrow_Dundas} and its proof), the canonical map \[\TR_n^m(X;p,\bb Z/p^v)\otimes_{W_m(A)}W_m(A_f)\To \TR_n^m(X\times_AA_f;p,\bb Z/p^v)\] is an isomorphism for any $f\in A$. In particular, if $f\in I$ so that $X\times_AA_f\cong\Spec A_f$, then the canonical map \[\TR_n^m(A;p,\bb Z/p^v)\otimes_{W_m(A)}W_m(A_f)\To \TR_n^m(X;p,\bb Z/p^v)\otimes_{W_m(A)}W_m(A_f)\] is an isomorphism. Since the rest of the proof is commutative algebra, write $M:=\TR_n^m(A;p,\bb Z/p^v)$ and $N:=\TR_n^m(X;p,\bb Z/p^v)$; these are finitely generated as $W_m(A)$-modules, by \cite[Thm.~5.3]{Morrow_Dundas}, and we have just proved that the map $M\to N$ becomes an isomorphism after applying $-\otimes_{W_m(A)}W_m(A_f)$ for any $f\in I$. 

Secondly, we claim that the spectra $\Spec W_m(A_f)$, $f\in I$, form an open affine cover of $\Spec W_m(A)\setminus V(W_m(I))$. This follows from the following two facts:
\begin{align*}
W_m(A_f)&=W_m(A)_{[f]},\\
W_m(I)^M&\subseteq\pid{\mbox{ideal of $W_m(A)$ generated by }[f]:\;f\in I}\mbox{ for }M\gg0,
\end{align*}
where $[f]\in W_m(A)$ denotes the Teichm\"uller lift of an element $f\in A$.
Both these facts are relatively standard results about Witt rings, e.g., see \cite[Lem.~A.6(i)]{Rulling2007} and \cite[Lem.~2.1 \& Rem.~2.2]{Morrow_Dundas} respectively.

It now follows from commutative algebra that the kernel and cokernel of the map $M\to N$ are killed by a power of $W_m(I)$. So, by the same argument as used at the end of the proof of Lemma \ref{lemma_AQ_of_birational}, the induced map $\{W_m(I)^rM\}_r\to\{W_m(I)^rN\}_r$ is an isomorphism. Finally, to complete the proof, note that the chains of ideals $W_m(I)^r$ and $W_m(I^r)$ are intertwined, by \cite[Lem.~2.1]{Morrow_Dundas}.
\end{proof}

\subsection{Pro cdh-descent for $K$-theory}\label{subsection_pro_cdh_K}
We now prove the main theorem of the paper, namely that $K$-theory satisfies the pro Mayer--Vietoris property with respect to abstract blow-up squares of varieties.

In the interest of not being restricted to varieties over a field, we prefer to work in the generality of Noetherian, quasi-excellent schemes whenever possible.  Recall that a Noetherian scheme $X$ is called quasi-excellent if and only if the formal fibre of each point of $X$ is geometrically regular and the regular locus of any finite type $X$-scheme is open. All ``naturally occurring'' schemes in algebraic geometry are quasi-excellent, and it is intimately related to resolution of singularities: according to Grothendieck \cite[Prop.~7.9.5]{EGA_IV_II}, if $X$ is a Noetherian scheme with the property that every integral, finite type $X$-scheme admits a desingularisation, then $X$ is quasi-excellent. If $X$ is furthermore assumed to be a $\bb Q$-scheme, then the converse is true by H.~Hironaka \cite{Hironaka1964} and M.~Temkin \cite[Thm.~2.3.6]{Temkin2008}.

The following refinement of the Haesemeyer argument is the key to understanding Noetherian, quasi-excellent $\bb Q$-schemes; I am grateful to Haesemeyer for discussions about this:

\begin{proposition}\label{proposition_Haesemeyer}
Let $\cal E$ be a presheaf of spectra on the category of Noetherian, quasi-excellent $\bb Q$-schemes with the following properties: $\cal E$ is invariant under nilpotent extensions, satisfies Nisnevich descent, and satisfies the Mayer--Vietoris property for blow-ups along regularly embedded centres. Then $\cal E$ satisfies the Mayer--Vietoris property for all abstract blow-up squares.
\end{proposition}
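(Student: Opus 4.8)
The plan is to carry out C.~Haesemeyer's argument \cite{Haesemeyer2004} in this generality, substituting quasi-excellent resolution of singularities for Hironaka's theorem. I would argue by Noetherian induction, in the strong form: one may assume the Proposition already known for every Noetherian, quasi-excellent $\bb Q$-scheme $X_0$ with $\dim X_0<\dim X$, and for every proper closed subscheme $X_0\subsetneq X$. Since $\cal E$ is invariant under nilpotent extensions, replace all four schemes by their reductions and assume $X$ reduced; Nisnevich descent makes the statement Zariski-local, and a standard reduction --- which first establishes the Mayer--Vietoris property for a cover $X=X_1\cup X_2$ by two closed subschemes, using the inductive hypothesis together with nilinvariance and Nisnevich descent --- lets us assume $X$ integral, and then, peeling off the components of $X'$ lying over $Y$ in the same manner, that $X'$ is integral too. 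As $X\setminus Y$ contains the generic point of $X$, the morphism $\pi\colon X'\to X$ is birational and surjective. Finally, replacing $Y$ by $Y\cup X_\sub{sing}$ (which is again nowhere dense, as $X$ is reduced and quasi-excellent), we may assume $X$ is regular away from $Y$: this is harmless because, for nowhere-dense closed $Y\sseq Y_1$, the square of $\pi$ with centre $Y$ is homotopy cartesian if and only if the one with centre $Y_1$ is --- the two differ by the abstract blow-up square $(\pi^{-1}Y_1,\pi^{-1}Y,Y,Y_1)$ over the lower-dimensional $Y_1$, which is homotopy cartesian by the inductive hypothesis.

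The engine is a \emph{gluing lemma}: if $T\sseq X$ is a nowhere-dense closed subscheme and $f\colon W=X_m\to X_{m-1}\to\cdots\to X_0=X$ is a composite of blow-ups along regularly embedded centres whose images in $X$ all lie in $T$, then the abstract blow-up square $(W,f^{-1}T,T,X)$ is homotopy cartesian for $\cal E$. One proves this by induction on $m$: the first square $(X_1=\op{Bl}_{C_0}X,\,E_0,\,C_0,\,X)$ is homotopy cartesian by hypothesis; the centre-enlargement observation of the previous paragraph, applied along $C_0\sseq T\sseq X$ and using that the ``difference'' square $(X_1\times_XT,\,E_0,\,C_0,\,T)$ lives over the lower-dimensional $T$, promotes this to the square with centre $T$; and one iterates up the tower, every auxiliary square living over a scheme of dimension $<\dim X$. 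Strong (functorial, ideal-principalising) resolution of singularities for quasi-excellent $\bb Q$-schemes, in the style of Hironaka \cite{Hironaka1964} and Temkin \cite{Temkin2008}, produces exactly such towers, and over a regular base keeps all intermediate schemes regular.

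For the endgame, resolve $X$ to obtain $a\colon\widetilde X\to X$: a composite of blow-ups along regularly embedded centres, with $\widetilde X$ regular, an isomorphism over $X\setminus Y$, and with all centres mapping into $Y$. By the gluing lemma the square of $a$ with centre $Y$ is homotopy cartesian. Likewise resolve $X'$ to obtain $c\colon\widetilde{X'}\to X'$ of the same type, with homotopy cartesian square. Since $\widetilde X$ and $\widetilde{X'}$ are regular modifications of $X$ agreeing over $X\setminus Y$, weak factorisation of birational morphisms between regular $\bb Q$-schemes lets us dominate both by a common regular scheme $Z\to X$ with $Z\to\widetilde X$ and $Z\to\widetilde{X'}$ again composites of blow-ups along regularly embedded centres, hence with homotopy cartesian squares by the gluing lemma. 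Now chain these facts through the two-out-of-three property of homotopy cartesian squares, with centres compatibly supported over $Y$: along $Z\to\widetilde X\xrightarrow{a}X$ to get the square of $Z\to X$; along $Z\to\widetilde{X'}\to X$ to get the square of $\widetilde{X'}\to X$; and along $\widetilde{X'}\xrightarrow{c}X'\xrightarrow{\pi}X$ to get the square of $\pi$. This shows the original square is homotopy cartesian, completing the induction.

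The genuine obstacle, I expect, lies in the geometric input and bookkeeping of the last two paragraphs. One needs not merely the existence of resolutions but the strong form that principalises a prescribed ideal sheaf and keeps intermediate schemes regular over a regular base, together with weak factorisation --- and one needs all of this for Noetherian, quasi-excellent $\bb Q$-schemes rather than for varieties over a field; assembling these resolution-theoretic tools in the required generality is precisely the refinement of Haesemeyer's argument alluded to. Beyond that, one must keep the blow-up centres occurring in the various towers compatibly supported over a single nowhere-dense closed subscheme of $X$, so that the two-out-of-three property and the centre-enlargement step apply without fuss, and track dimensions carefully so that the inductive hypothesis is available exactly where centres are enlarged; the preliminary reductions (closed-cover Mayer--Vietoris and the passage to integral $X$ and $X'$) are routine once Nisnevich descent and nilinvariance are available.
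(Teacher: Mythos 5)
Your proposal takes a genuinely different route from the paper's, and it has real gaps. The paper does not re-run Haesemeyer's induction at all: it cites the axiomatised form of the argument (\cite[Thm.~3.12]{Cortinas2008}) as a black box for finite type schemes over a base with strong resolution, observes that the only inputs are an infinite base and strong resolution in the sense of \cite[Thm.~2.4]{Haesemeyer2004} --- which Hironaka supplies over any Noetherian, quasi-excellent \emph{local} ring with characteristic zero residue field --- and then uses Zariski descent of $\cal E$ to reduce a general abstract blow-up square of quasi-excellent $\bb Q$-schemes to its base changes along $\Spec\roi_{X,x}\to X$. This localisation is the whole point of the proof: it means one never needs resolution of singularities (let alone principalisation or factorisation) for arbitrary quasi-excellent $\bb Q$-schemes, only Hironaka's original local statement. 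You correctly identify the resolution-theoretic input as ``the genuine obstacle,'' but you leave it unresolved, and the idea that dissolves it --- localise first --- is missing from your argument.

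Beyond that, your endgame is not correct as stated. What you invoke under the name ``weak factorisation'' --- a common regular $Z$ dominating both $\widetilde X$ and $\widetilde{X'}$ with $Z\to\widetilde X$ and $Z\to\widetilde{X'}$ composites of blow-ups along regularly embedded centres --- is \emph{strong} factorisation, which is open even for varieties; weak factorisation only produces a zigzag of blow-ups and blow-downs, and neither version was available for Noetherian, quasi-excellent $\bb Q$-schemes. Haesemeyer's argument does not use factorisation at all: the birational maps between modifications are handled by Raynaud--Gruson platification (dominating a proper birational map by an honest blow-up with centre supported in $Y$) followed by Hironaka's principalisation of the centre ideal. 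Two further points deserve care. First, the centres of a resolution tower of the \emph{singular} scheme $X'$ are regular subschemes of singular ambient schemes and need not be regularly embedded, so your gluing lemma does not apply verbatim to $c\colon\widetilde{X'}\to X'$; the standard argument is arranged so that the blow-ups with prescribed Mayer--Vietoris behaviour all occur along genuinely regularly embedded centres. Second, your reduction to integral $X'$ by ``peeling off components over $Y$'' requires Mayer--Vietoris for a closed cover of $X'$, whose dimension can exceed that of $Y$ (components over $Y$ are only proper over $Y$), so the dimension induction does not obviously cover it. None of these issues arises in the paper's proof, precisely because it never re-opens Haesemeyer's induction.
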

\begin{proof}
To simplify the explanation, we begin by indicating an abstract blow-up square of Noetherian, quasi-excellent, $\bb Q$-schemes and its value under $\cal E$:
\[(1)\quad\xymatrix{
Y' \ar[r]\ar[d] & X' \ar[d]\\
Y\ar[r]& X
}
\qquad\qquad
(2)\quad \xymatrix{
\cal E(X) \ar[r]\ar[d] & \cal E(X') \ar[d]\\
\cal E(Y)\ar[r]& \cal E(Y')
}
\]
According to the Haesemeyer argument \cite[\S5--6]{Haesemeyer2004}, axiomised into the form in which we are using it in \cite[Thm.~3.12]{Cortinas2008}, our assumptions apply that square (2) is homotopy cartesian as soon as square (1) consists of finite type $k$ schemes for some characteristic zero field $k$. However, examination of Haesmeyer's original argument reveals that the only required properties of $k$ are that it be infinite and have strong resolution of singularities in the sense specified in \cite[Thm.~2.4]{Haesemeyer2004}. But Hironaka \cite{Hironaka1964} proved strong resolution of singularities over any Noetherian, quasi-excellent local ring $A$ with characteristic zero residue field; so Haesemeyer's argument works in this generality, and we deduce that square (2) is homotopy cartesian whenever square (1) consists of finite type $A$-schemes, for such a ring $A$.

Finally, let square (1) consist of arbitrary Noetherian, quasi-excellent $\bb Q$-schemes. Since $\cal E$ satisfies Zariski descent, square (2) is homotopy cartesian as soon as the squares
\[\xymatrix{
\cal E(\Spec\roi_{X,x}) \ar[r]\ar[d] & \cal E(X'\times_X\Spec\roi_{X,x}) \ar[d]\\
\cal E(Y\times_X\Spec\roi_{X,x})\ar[r]& \cal E(Y'\times_X\Spec\roi_{X,x})
}\]
are homotopy cartesian for all points $x\in X$. But since $A=\roi_{X,x}$ is a Noetherian, quasi-excellent local ring with characteristic residue field, these square are indeed homotopy cartesian, by the previous paragraph.
\end{proof}
 
We have reached the main theorem of the paper: 
 
\begin{theorem}[Pro descent for $K$-theory wrt.~abstract blow-ups]\label{theorem_pro_descent_for_K_theory}
Let \xysquare{Y'}{X'}{Y}{X}{->}{->}{->}{->} be an abstract blow-up square of either
\begin{enumerate}
\item Noetherian, quasi-excellent $\bb Q$-schemes of finite Krull dimension; or
\item finite type schemes over an infinite, perfect field $k$ which has strong resolution of singularities.
\end{enumerate}
Then the canonical map \[\{K_n(X,rY)\}_r\To\{K_n(X',rY')\}_r\] of pro abelian relative $K$-groups is an isomorphism for all $n\in\bb Z$.
\end{theorem}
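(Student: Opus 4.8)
The plan is to reduce the statement to the pro Mayer--Vietoris properties already established for cyclic homology (Theorem \ref{theorem_pro_cdh_for_HH}) and for topological cyclic homology (Theorem \ref{theorem_pro_cdh_for_THH}), via the known comparisons between $K$-theory and (topological) cyclic homology, with Proposition \ref{proposition_Haesemeyer} used to control homotopy $K$-theory and the auxiliary cdh-local theories. The assertion is equivalent to the square of pro-spectra $K(X)\to K(X')$, $\{K(rY)\}_r\to\{K(rY')\}_r$ being homotopy cartesian. The key elementary observation is that for every $r\ge1$ the square
\xysquare{rY'}{X'}{rY}{X}{->}{->}{->}{->}
is again an abstract blow-up square: indeed $rY$ and $Y$ have the same support, so $X\setminus rY=X\setminus Y$ and the induced map $X'\setminus rY'\to X\setminus rY$ is an isomorphism, while $rY'=X'\times_X rY$ because $(\cal I\roi_{X'})^r=\cal I^r\roi_{X'}$. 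Hence any presheaf of spectra satisfying the Mayer--Vietoris property for abstract blow-up squares sends \emph{each} of these squares to a homotopy cartesian square; the whole point is that $K$-theory does so only after passing to the pro-system in $r$. Writing $\cal F:=\op{hofib}(K\to KH)$, we have a fibre sequence $\cal F\to K\to KH$, and $KH$ is nilpotent-invariant, satisfies Nisnevich descent, and satisfies Mayer--Vietoris for blow-ups along regularly embedded centres; so by Proposition \ref{proposition_Haesemeyer} in case~(i), and by Haesemeyer's original argument over $k$ \cite{Haesemeyer2004} in case~(ii), $KH$ satisfies Mayer--Vietoris for all abstract blow-up squares. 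Therefore the pro-square for $KH$ is homotopy cartesian, and it remains to prove that the pro-square for $\cal F$ is homotopy cartesian. (Case~(ii) with $\Char k=0$ is subsumed by case~(i), a finite type scheme over a field being Noetherian, quasi-excellent and of finite Krull dimension.)

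In case~(i) we invoke the comparison of Corti\~nas--Haesemeyer--Schlichting--Weibel \cite{Cortinas2008} between $K$-theory and cyclic homology. Applying Proposition \ref{proposition_Haesemeyer} to infinitesimal $K$-theory $K^{\op{inf}}:=\op{hofib}(K\to HN^{\bb Q})$ --- which is nilpotent-invariant by Goodwillie, Nisnevich-local, and satisfies Mayer--Vietoris for regular-centre blow-ups --- shows that $K^{\op{inf}}$ already satisfies cdh-descent; combined with the identification of $KH$ with the cdh-sheafification of $K$ (valid here thanks to Temkin's resolution of singularities), this yields a natural equivalence $\cal F\simeq\op{hofib}\big(HN^{\bb Q}\to (HN^{\bb Q})_{\mathrm{cdh}}\big)$, where $(-)_{\mathrm{cdh}}$ denotes cdh-sheafification. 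Now $(HN^{\bb Q})_{\mathrm{cdh}}$ is a cdh sheaf, so its pro-square over the thickened abstract blow-up square is homotopy cartesian; and $HN^{\bb Q}$ is the homotopy fibre of a natural map $HP^{\bb Q}\to\Sigma^2 HC^{\bb Q}$, in which $HP^{\bb Q}$ satisfies cdh-descent (again by Proposition \ref{proposition_Haesemeyer}, being nilpotent-invariant, Nisnevich-local and satisfying the regular-centre blow-up formula) and so contributes a homotopy cartesian pro-square, while the $HC^{\bb Q}$-term contributes a homotopy cartesian pro-square by Theorem \ref{theorem_pro_cdh_for_HH}. Assembling these finite homotopy limits, the pro-square for $\cal F$ is homotopy cartesian, settling case~(i) and case~(ii) in characteristic zero.

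In case~(ii) with $\Char k=p$, one runs the same argument with topological cyclic homology. First, $\cal F$ is built out of Bass--Nil $K$-theory, which is $p$-primary torsion for $\bb F_p$-algebras, so $\cal F\otimes\bb Z[1/p]=0$ on $\bb F_p$-schemes and it suffices to work with $\bb Z/p^v$-coefficients for each $v\ge1$. By the Dundas--Goodwillie--McCarthy theorem the cyclotomic trace $K\to TC$ is a nil-equivalence and by Geisser--Hesselholt \cite{GeisserHesselholt2010} its homotopy fibre is well-behaved with finite coefficients; so, using Haesemeyer's argument over $k$ once more, $\op{hofib}\big(K\to TC(-;p,\bb Z/p^v)\big)$ satisfies cdh-descent, whence $\cal F(-;\bb Z/p^v)\simeq\op{hofib}\big(TC(-;p,\bb Z/p^v)\to TC(-;p,\bb Z/p^v)_{\mathrm{cdh}}\big)$. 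The target is a cdh sheaf, and, writing $TC=\holim_m TC^m$, the pro-square for $TC(-;p,\bb Z/p^v)$ is homotopy cartesian by Theorem \ref{theorem_pro_cdh_for_THH}; hence so is the pro-square for $\cal F(-;\bb Z/p^v)$, and therefore for $\cal F$.

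Since the deep inputs --- Proposition \ref{proposition_Haesemeyer}, Theorems \ref{theorem_pro_cdh_for_HH} and \ref{theorem_pro_cdh_for_THH}, and the Corti\~nas--Haesemeyer--Schlichting--Weibel and Geisser--Hesselholt comparisons --- are all in hand, the remaining work is largely organisational, and the main obstacle is of a technical bookkeeping nature. One must keep careful track of which auxiliary theories are cdh sheaves so that they drop out of all the squares, which rests entirely on the observation in the first paragraph that $(rY'\to X',\,rY\to X)$ is again an abstract blow-up square for every $r$; and --- more seriously in positive characteristic --- one must verify that the homotopy limit $TC=\holim_m TC^m$ (and, over $\bb Q$, the homotopy limits implicit in negative cyclic homology) interacts correctly with the pro-system in $r$, i.e.\ that the pro-vanishing supplied by Theorem \ref{theorem_pro_cdh_for_THH} is sufficiently uniform in $m$, and that the integral conclusion reassembles from the $p$-adic part and the vanishing of $\cal F\otimes\bb Z[1/p]$. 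As elsewhere in the paper, finite Krull dimensionality and the resulting uniform boundedness --- used in the same spirit as for the pro spectral sequences of Section \ref{section_pro_review} --- are what make these last points go through.
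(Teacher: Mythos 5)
Your strategy rests on the same pillars as the paper's proof --- Proposition \ref{proposition_Haesemeyer} to upgrade the Corti\~nas--Haesemeyer--Schlichting--Weibel and Geisser--Hesselholt descent theorems, the observation that each thickened square $(rY'\to X',\,rY\to X)$ is again an abstract blow-up square, and Theorems \ref{theorem_pro_cdh_for_HH} and \ref{theorem_pro_cdh_for_THH} for the (topological) cyclic homology side --- but you route everything through $\cal F=\op{hofib}(K\to KH)$ and cdh-sheafifications, whereas the paper works directly with the fibre sequences $K^{\sub{inf}}\to K\to H\!N^{\bb Q}$ (resp.\ $K^{\sub{inf},m}\to K\to \TC^m$) and five-lemma arguments on the resulting long exact sequences of pro abelian groups. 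In characteristic zero your argument is correct, but the detour through $KH$ and $(H\!N^{\bb Q})_{\sub{cdh}}$ buys nothing: once $K^{\sub{inf}}$ and $H\!P^{\bb Q}$ are known to send every thickened square to a homotopy cartesian square, the long exact sequence relating $H\!N^{\bb Q}$, $H\!P^{\bb Q}$ and $\HC^{\bb Q}$ together with Theorem \ref{theorem_pro_cdh_for_HH} gives the pro-isomorphism for $H\!N^{\bb Q}_n$, and the $K^{\sub{inf}}$-sequence then gives it for $K_n$; no identification of $KH$ with the cdh-sheafification of $K$ is needed.

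In characteristic $p$, however, the step you flag but do not close is a genuine gap rather than bookkeeping. You want the pro-square in $r$ for $\TC(-;p,\bb Z/p^v)=\holim_m\TC^m(-;p,\bb Z/p^v)$ to be homotopy cartesian ``by Theorem \ref{theorem_pro_cdh_for_THH}'', but that theorem is a levelwise pro-isomorphism in $r$ for each \emph{fixed} $m$, and a levelwise pro-isomorphism of towers need not survive $\holim_m$ without a uniformity statement (an interchange/$\varprojlim^1$ problem); nothing in the paper supplies such uniformity for $\TC$ itself. The fix is not to verify uniformity but never to form $\holim_m$: Geisser--Hesselholt's theorem \cite{GeisserHesselholt2010} is already a statement about the pro-spectra $\{K^{\sub{inf},m}\}_m$, so one runs the five lemma in the category of pro abelian groups bi-indexed by $(r,m)$, where the $\TC^m$-terms are handled by Theorem \ref{theorem_pro_cdh_for_THH} and the $K$-terms are constant in $m$; the conclusion for $\{K_n(X,rY)\}_r$ then falls out. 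Relatedly, your reassembly of the integral statement from $\bb Z/p^v$-coefficients via the $p$-primary torsion of Bass--Nil groups is extra work that the paper sidesteps by eliminating finite coefficients at the level of $\TC^m$ using \cite{Morrow_Dundas}. In short: correct in outline and in characteristic zero, but the positive-characteristic case needs to be restructured around $\TC^m$ rather than $\TC$.
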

\begin{proof}
(i): For any quasi-compact, quasi-separated $\bb Q$-scheme $Z$, let $H\!N^{\bb Q}(Z)$ and $H\!P^{\bb Q}(Z)$ denote its negative and periodic cyclic homologies over $\bb Q$, as formulated in \cite{Cortinas2008}. The infinitesimal $K$-theory of $Z$ is defined to be the homotopy fibre of the Chern character from $K$-theory to negative cyclic homology, i.e., \[K^\sub{inf}(Z):=\op{hofib}(K(Z)\xto{ch} N\!H^{\bb Q}(Z)).\] According to [Thm.\ 4.6, loc.~cit.] and [Cor.~3.13, loc.~cit.], or rather their improvements using Proposition \ref{proposition_Haesemeyer}, the presheaves of spectra $K^\sub{inf}$ and $H\!P$ carry any abstract blow-up square of Noetherian, quasi-excellent $\bb Q$-schemes to a homotopy cartesian square of spectra; so the canonical maps of relative spectra \[K^\sub{inf}(X,rY)\To K^\sub{inf}(X',rY'),\quad\quad H\!P^{\bb Q}(X,rY)\To H\!P^{\bb Q}(X',rY')\] are weak-equivalences for all $r\ge1$.

From the long exact sequence $\cdots\to H\!N_n^{\bb Q}\to H\!P_n^{\bb Q}\to \HC_{n-2}^{\bb Q}\to\cdots$, Theorem \ref{theorem_pro_cdh_for_HH} with $k=\bb Q$, and the usual five lemma argument, we may now deduce that the canonical map $\{H\!N_n^{\bb Q}(X,rY)\}_r\to \{H\!N_n^{\bb Q}(X',rY')\}_r$ is an isomorphism for all $n\in\bb Z$. Applying the same argument to the long exact sequence $\cdots\to K^\sub{inf}_n\to K_n\to N\!H_n\to\cdots$ we deduce that $\{K_n(X,rY)\}_r\to\{K_n(X',rY')\}_r$ is an isomorphism for all $n\in\bb Z$, as desired.

(ii): The infinitesimal $K$-theory $\{K^{\sub{inf},m}(Z)\}_m$ of a finite type $k$-scheme $Z$ is defined to be the pro spectrum arising as the homotopy fibres of the trace map from $K$-theory to topological cyclic homology, i.e., \[K^{\sub{inf},m}(Z):=\op{hofib}(K(Z)\xto{tr}\TC^m(Z;p)).\] According to \cite[Thm.~B]{GeisserHesselholt2010} (see also \cite{Krishna2009a}) the square of pro spectra \xysquare{\{K^{\sub{inf},m}(X)\}_m}{\{K^{\sub{inf},m}(Y)\}_m}{\{K^{\sub{inf},m}(X')\}_m}{\{K^{\sub{inf},m}(Y')\}_m}{->}{->}{->}{->} is homotopy cartesian. Therefore our claim for $K$-theory follows, in a similar way to characteristic zero, from the analogous assertion for topological cyclic homology, namely Theorem \ref{theorem_pro_cdh_for_HH} (note that the hypotheses of Theorem \ref{theorem_pro_cdh_for_HH} are satisfied for finite type $k$-schemes, and that for such schemes the finite coefficients can be eliminated everywhere; see \cite[Corol.~5.9]{Morrow_Dundas} for related discussion).
\end{proof}

\begin{remark}
There are three special cases in which Theorem \ref{theorem_pro_descent_for_K_theory} is already known or can be otherwise deduced:
\begin{enumerate}
\item If $Y\to X$ is a regular immersion and $X'$ is the blow-up of $X$ along $Y$, then it follows from R.~Thomason's blow-up formula \cite{Thomason1993} that there are short exact sequences of pro abelian groups \[0\To K_n(X)\To\{K_n(rY)\}_r\oplus K_n(X')\To\{K_n(rY')\}_r\To0,\] which is a stronger statement than the conclusion of Theorem \ref{theorem_pro_descent_for_K_theory}.
\item If $X'\to X$ is a finite morphism, then Theorem \ref{theorem6} reduces to pro excision, which was established for arbitrary commutative, Noetherian rings in \cite{Morrow_pro_h_unitality}.
\item If $X$ is a Cohen--Macaulay variety over an infinite field with only isolated singularities, $Y=X_\sub{sing}$ and $X'\to X$ is a resolution of singularities, then a technique of Weibel \cite{Weibel2001} using minimal reduction ideals implies that $X'$ may be obtained by first blowing up $X$ along a regular immersion and then normalising. So in this case Theorem \ref{theorem_pro_descent_for_K_theory} reduces to combining cases (i) and (ii). This case of desingularising a Cohen--Macaulay variety with isolated singularities first proved by Krishna \cite[Proof of Thm.~1.2]{Krishna2010} in characteristic zero.
\end{enumerate}
The validity of these special cases of Theorem \ref{theorem_pro_descent_for_K_theory} was our motivation for establishing it in full generality. It is a commonly encountered issue in cdh-descent problems that typical resolutions of singularities cannot be factored into regularly embedded blow-ups and normalisations, e.g.~\cite[e.g.~6.3]{Haesemeyer2004}. Since the Haesemeyer argument of Proposition \ref{proposition_Haesemeyer} is not valid here (the pro spectra depend depend not only on the given schemes, but also on the embeddings), there appears to be no way to assemble the aforementioned cases (i)--(iii) into a proof of Theorem \ref{theorem_pro_descent_for_K_theory}.
\end{remark}

\section{Applications of Theorem \ref{theorem_pro_descent_for_K_theory}}\label{section_application_1}
As mentioned in the introduction, the main applications of Theorem \ref{theorem_pro_descent_for_K_theory}, which are to zero cycles on singular varieties, are presented in an accompanying paper \cite{Morrow_zero_cycles}. Here we offer some more straightforward applications to demonstrate its use, first by reformulating it as a definition of $K$-theory of compact support and then using it to give a quick proof of parts of Weibel's $K$-dimension conjecture on negative $K$-theory.

For simplicity we adopt the following conventions in this section:
\begin{quote}
A field $k$ will be called {\em good} if and only if it is infinite, perfect, and has strong resolution of singularities, e.g., $\op{char}k=0$ suffices.

A {\em $k$-variety} means simply a finite type $k$-scheme; further assumptions will be specified when required, and the reference to $k$ with occasionally be omitted.
\end{quote}

\subsection{$K$-theory with compact support}\label{subsection_compact_support}
Here we explain how Theorem \ref{theorem_pro_descent_for_K_theory} may be interpreted as the well-definedness of ``$K$-theory with compact support''; I am grateful to H.~Gillet for suggesting this interpretation to me. In this section $k$ is a good field.

By Nagata, any separated $k$-variety $X$ has a compactification $\overbar X$, i.e.~$\overbar X$ is proper over $k$ and $X$ is a dense open subset of $\overbar X$; we refer the reader to \cite{Lutkebohmert1993} for the proof.

\begin{definition}
Let $X$ be a separated $k$-variety. Let $\overbar X$ be any compactification of $X$, and set $Y=\overbar X\setminus X$, equipped with any structure as a closed subscheme. The {\em $K$-theory of $X$ with compact support} is defined to be the spectrum \[K^c(X):=\op{holim}_rK(\overbar X,rY),\] and the $K$-groups of $X$ with compact support are defined to be its homotopy groups.
\end{definition}

The content of the following proposition is equivalent to taking homotopy limits in Theorem \ref{theorem_pro_descent_for_K_theory} in the case of separated $k$-varieties:

\begin{proposition}\label{proposition_compact1}
The previous definition does not depend on the chosen compactification $\overbar X$ of $X$.
\end{proposition}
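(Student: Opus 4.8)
The plan is to reduce to the case of two compactifications, one of which dominates the other, and then invoke Theorem~\ref{theorem_pro_descent_for_K_theory}. First I would recall the standard fact that any two compactifications $\overbar X_1$ and $\overbar X_2$ of $X$ can be dominated by a third: take $\overbar X_{12}$ to be the closure of the diagonally embedded copy of $X$ inside $\overbar X_1 \times_k \overbar X_2$. This $\overbar X_{12}$ is proper over $k$, contains $X$ as a dense open, and the two projections give proper morphisms $p_i \colon \overbar X_{12} \to \overbar X_i$ which restrict to the identity on $X$. Hence it suffices to show that $K^c(X)$ computed via $\overbar X$ agrees with $K^c(X)$ computed via $\overbar X'$ whenever there is a proper morphism $p \colon \overbar X' \to \overbar X$ that is an isomorphism over $X$.

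In that situation, set $Y := \overbar X \setminus X$ with its reduced (or any) closed subscheme structure, and let $Y' := \overbar X' \times_{\overbar X} Y$, i.e.\ $Y'$ is the preimage of $Y$ with the pulled-back scheme structure; then $rY' = \overbar X' \times_{\overbar X} rY$ for every $r$, and the square with corners $Y', \overbar X', Y, \overbar X$ is an abstract blow-up square of finite type $k$-schemes, since $p$ is proper, $Y \to \overbar X$ is a closed embedding, and $p$ induces an isomorphism $\overbar X' \setminus Y' \cong X \cong \overbar X \setminus Y$. Theorem~\ref{theorem_pro_descent_for_K_theory}(ii) (the hypothesis that $k$ is good supplies the infiniteness, perfectness, and strong resolution of singularities) then gives that $\{K_n(\overbar X, rY)\}_r \to \{K_n(\overbar X', rY')\}_r$ is an isomorphism of pro abelian groups for all $n$. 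Equivalently, the map of pro spectra $\{K(\overbar X, rY)\}_r \to \{K(\overbar X', rY')\}_r$ is a pro-equivalence, and applying $\holim_r$ yields a weak equivalence $K^c(X)_{\overbar X} \xrightarrow{\simeq} K^c(X)_{\overbar X'}$. One small point to check is that these identifications are coherent, so that given three compactifications the two resulting equivalences with the dominating one are compatible up to the expected homotopy; this is a formal consequence of the functoriality of $K(-,r(-))$ and of $\holim_r$, but I would remark on it for completeness.

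The main obstacle is essentially bookkeeping rather than depth: one must be careful that the chosen scheme structure on $Y = \overbar X \setminus X$ is genuinely irrelevant. Since any two closed subscheme structures on the same closed set differ by nilpotents in a bounded way, the two resulting towers $\{rY\}_r$ are mutually cofinal (each $\cal I_1^{rN} \subseteq \cal I_2^r \subseteq \cal I_1^r$ for suitable $N$ on an affine cover, hence globally), so the pro spectra $\{K(\overbar X, rY)\}_r$ agree up to canonical pro-isomorphism regardless of the structure chosen, and $\holim_r$ is insensitive to reindexing along a cofinal map. Thus the definition is independent of both the choice of $\overbar X$ and the choice of boundary scheme structure, as claimed.
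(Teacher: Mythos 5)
Your proof is correct and follows essentially the same route as the paper: dominate the two compactifications by a third, observe that each resulting square is an abstract blow-up square, apply Theorem~\ref{theorem_pro_descent_for_K_theory}, and pass to $\holim_r$. The only cosmetic difference is that you construct the dominating compactification as the closure of the graph in the product, whereas the paper cites L\"utkebohmert's result that a common domination can be taken to be a blow-up of both; since the pro descent theorem applies to arbitrary abstract blow-up squares this is immaterial, and your added remark on the independence of the subscheme structure on $Y$ (via intertwining of the ideal powers) is a correct handling of a point the paper leaves implicit.
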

\begin{proof}
Suppose that $\res X_i$, $i=1,2$ are different compacitifications of $X$; then, by \cite[Corol.~2.4]{Lutkebohmert1993}, there exists a third compacitification $\overbar X_3$ which is a blow-up of both $\overbar X_1$ and $\overbar X_2$. Let $Y_1,Y_2,Y_3$ be the complements of $X$ in the compactifications. Then diagram \xysquare{Y_3}{X_3}{Y_i}{X_i}{->}{->}{->}{->} is an abstract blow-up square for $i=1,2$, and so Theorem \ref{theorem_pro_descent_for_K_theory} implies that the maps \[\{K_n(\overbar X_1,rY_1)\}_r\To\{K_n(\overbar X_3,rY_3)\}_r\longleftarrow \{K_n(\overbar X_2,rY_2)\}_r\] are isomorphisms for all $n\in\bb Z$. Hence the maps
\[\op{holim}_rK(\overbar X_1,rY_1)\To\op{holim}_rK(\overbar X_3,rY_3)\longleftarrow \op{holim}_rK(\overbar X_2,rY_2)\] are weak equivalences, as required.
\end{proof}

\begin{remark}
If $\overbar X$ is a proper variety containing $X$ as an open, but not-necessarily dense, subscheme, then \xysquare{Y':=X'\setminus X}{X'}{Y:=\overbar X\setminus X}{\overbar X}{->}{->}{->}{->} is an abstract blow-up square, where $X'$ denotes the closure of $X$ inside $\overbar X$. So Theorem \ref{theorem_pro_descent_for_K_theory} implies that $K^c(X)\simeq\op{holim}_rK(\overbar X,rY)$. In other words, $K^c(X)$ may be defined with respect to any proper variety containing $X$ as an open subset: it is not necessary that $X$ be dense. This will be implicitly used in the next result.
\end{remark}

To justify its definition as a theory with compact support, we show that $K^c$ fits into a localisation sequence:

\begin{proposition}\label{proposition_compact2}
Let $Y\to X$ be a closed embedding of separated $k$-varieties, with open complement $U\subseteq X$. Then there is a functorial homotopy fibre sequence \[K^c(U)\To K^c(X)\To\op{holim}_rK^c(rY).\]
\end{proposition}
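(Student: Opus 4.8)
The plan is to compute all three compactly supported $K$-theory spectra using a single, compatible compactification and then invoke the localisation sequence for $K$-theory on the chosen proper model. First I would choose a proper $k$-variety $\overbar X$ containing $X$ as a (not necessarily dense) open subvariety; by the remark following Proposition \ref{proposition_compact1} this computes $K^c(X)$. Setting $Z:=\overbar X\setminus X$ with any closed subscheme structure and $\overbar Y:=$ the closure of $Y$ in $\overbar X$, one checks that $\overbar Y$ is then a proper variety compactifying $Y$, with $\overbar Y\setminus Y\subseteq Z$, so that $K^c(rY)$ (for each $r$) may also be computed inside $\overbar X$; and $\overbar U:=$ the closure of $U$, equivalently $\overbar X$ again or a suitable proper model, compactifies $U$. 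The upshot is that all three compactifications live inside the single proper variety $\overbar X$, with a compatible system of exceptional loci.

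Next I would fix $r\ge 1$ and work with the proper scheme $\overbar X$ and its closed subschemes $r\overbar Y$ and $rZ$ (the $r^\sub{th}$ infinitesimal thickenings). On the proper model, standard localisation in $K$-theory for a closed immersion $r\overbar Y\cup rZ \hookleftarrow$ etc. gives a homotopy fibre sequence relating $K(\overbar X, rZ)$, $K(\overbar X, r\overbar Y\cup rZ)$ (or a pushout thereof), and $K$-theory of the thickened exceptional locus. The precise statement I want is a homotopy fibre sequence, for each $r$, of the shape
\[
K(\overbar U, rZ_U)\To K(\overbar X, rZ)\To K(r\overbar Y, rZ\cap r\overbar Y)
\]
obtained by combining the relative $K$-theory long exact sequences attached to the chain of closed subschemes $rZ\subseteq rZ\cup r\overbar Y$ inside $\overbar X$ together with excision identifying the relative term with $K$-theory of the thickened boundary inside $\overbar Y$; here one uses that, away from a neighbourhood of $Y$, the pairs $(\overbar X,rZ)$ and $(\overbar U,rZ_U)$ agree. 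Taking $\op{holim}_r$ of this compatible system of fibre sequences, and noting that homotopy limits preserve homotopy fibre sequences, yields $K^c(U)\to K^c(X)\to \op{holim}_r K^c(rY)$; functoriality in the data is inherited from functoriality of the relative $K$-theory sequences and of compactifications (using Proposition \ref{proposition_compact1} to render the construction independent of choices).

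The main obstacle I anticipate is the bookkeeping needed to identify the relative term correctly: one must match up the two families of infinitesimal thickenings — those of $Z$ inside $\overbar X$, those of $\overbar Y\setminus Y$ inside $\overbar Y$, and those of $Y$ inside the various proper models used to define $K^c(rY)$ — and show that the resulting pro-systems are intertwined so that the homotopy limits agree. This is essentially an Artin--Rees / cofinality argument: the ideal sheaves $\cal I_{Z}^a + \cal I_{\overbar Y}^b$ and $\cal I_{Z\cup\overbar Y}^r$ define intertwined chains of thickenings, so the associated pro spectra are isomorphic in the pro homotopy category, and one may pass freely between them before taking $\op{holim}$. Once that identification is in place, the localisation sequence on the fixed proper model $\overbar X$ is classical, and the conclusion follows by applying $\op{holim}_r$; I expect no further difficulty, since all schemes in sight are finite type over the good field $k$ so that $K$-theory, relative $K$-theory, and their pro-versions behave as in Theorem \ref{theorem_pro_descent_for_K_theory}.
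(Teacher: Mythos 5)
Your overall skeleton --- one compatible proper model $\overbar X$, a fibre sequence for each $r$, then $\op{holim}_r$ plus an intertwining argument to match up the various systems of thickenings --- is the same as the paper's, and your final paragraph about intertwined pro-systems is exactly the paper's last step. But the central step, producing the fibre sequence
\[K(\overbar U, rZ_U)\To K(\overbar X, rZ)\To K(r\overbar Y, rZ\cap r\overbar Y)\]
via ``standard localisation in $K$-theory for a closed immersion \dots together with excision identifying the relative term with $K$-theory of the thickened boundary,'' is a genuine gap. For singular schemes the localisation sequence identifies the fibre of $K(\overbar X)\to K(\overbar X\setminus\overbar Y)$ with $K$-theory \emph{supported on} $\overbar Y$, not with $K(r\overbar Y)$: the d\'evissage needed to pass to the closed subscheme fails outside the regular setting, and likewise closed-cover/Zariski excision for relative $K$-theory of pairs fails for each fixed $r$ --- this failure of excision is precisely the phenomenon the whole paper is about, and only pro-versions of such statements are true. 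So the identification of your third term is unjustified as stated, and false at the level of individual $r$.

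The paper's proof needs no localisation or excision theorem at all. Set $W:=\overbar X\setminus U$ (reduced), so $Z\subseteq W$; the composite of closed embeddings $r_{\overbar X}Z\to r_{\overbar X}W\to\overbar X$ gives, purely formally (an octahedron on the defining fibre sequences of relative $K$-theory), a fibre sequence $K(\overbar X,r_{\overbar X}W)\to K(\overbar X,r_{\overbar X}Z)\to K(r_{\overbar X}W,r_{\overbar X}Z)$. Taking $\op{holim}_r$, the first term is $K^c(U)$ by the remark that $K^c$ may be computed from any proper variety containing $U$ as a not-necessarily-dense open subscheme (this, via Theorem \ref{theorem_pro_descent_for_K_theory}, is the only non-formal input), and the third term is identified with $\op{holim}_rK^c(r_XY)$ because $r_XY$ is exactly the open complement of $r_{\overbar X}Z$ in $r_{\overbar X}W$, so $r_{\overbar X}W$ itself serves as the proper model computing $K^c(r_XY)=\op{holim}_sK(r_{\overbar X}W,s(r_{\overbar X}Z))$; one then concludes by the intertwining of the double and single inverse systems, as you anticipated. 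In short: replace your appeal to localisation-plus-excision by the formal fibre sequence attached to the chain $Z\subseteq W\subseteq\overbar X$ and by the independence of $K^c$ of the chosen proper model, and the argument goes through.
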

\begin{proof}
The proof is a chase of the definitions in which some care is required regarding infinitesimal thickenings.

Firstly, we may suppose $Y$ is reduced since that does not affect its system of thickenings in $X$. Let $\overbar X$ be a compactification of $X$, and set $Z:=\overbar X\setminus X$ and $W:=\overbar X\setminus U$, with their reduced subscheme structure to obtain closed embeddings $Z\to W\to\overbar X$. Then there is a resulting commutative diagram of homotopy fibre sequences
\[\xymatrix{
K(\overbar X,r_{\overbar X}Z) \ar[r]\ar[d] & K(\overbar X)\ar[r]\ar@{=}[d] & K(r_{\overbar X}Z)\ar[d]\\
K(\overbar X,r_{\overbar X}W) \ar[r] & K(\overbar X)\ar[r] & K(r_{\overbar X}W)
}\]
in which the notations $r_{\overbar X}Z=rZ$ and $r_{\overbar X}W=rW$ are to remind the reader and author that the infintesimal thickenings are taken inside $\overbar X$. Taking $\op{holim}_r$ and homotopy cofibres of the vertical maps yields a homotopy fibre sequence \[K^c(U)\To K^c(X)\To \op{holim}_rK(r_{\overbar X}W,r_{\overbar X}Z).\]

Next we claim that the open complement of the closed embedding $r_{\overbar X}Z\to r_{\overbar X}W$ is precisely $r_XY$, where this latter infinitesimal thickening is taken inside $X$; this follows from the identifications \[r_{\overbar X}Z\times_{\overbar X}X=r_X(Z\times_{\overbar X}X)=r_XY.\] Hence $K^c(r_XY)=\op{holim}_sK^c(r_{\overbar X}W, s(r_{\overbar X}Z))$, where we are taking the $s^\sub{th}$ infinitesimal thickening of $r_{\overbar X}Z$ inside $r_{\overbar X}W$. To complete the proof it remains to check that the canonical map \[\op{holim}_{r,s}K(r_{\overbar X}W, s(r_{\overbar X}Z))\To \op{holim}_rK(r_{\overbar X}W,r_{\overbar X}Z)\] is a weak equivalence. This is true simply because the inverse systems on each side are intertwined. (This is easiest to see in the affine case, when $\overbar X=\Spec A$, $W=\Spec A/J$, and $Z=\Spec A/I$ with $I\supseteq J$; then the canonical map of interest can be written as $\op{holim}_{r,s}K(A/J^r,(J^r+I^{sr})/J^r)\to\op{holim}_r K(A/J^r,I^r/J^r)$, in which the two systems are intertwined because the transition map $K(A/J^{sr},I^{sr}/J^{sr})\to K(A/J^r,I^r/J^r)$ factors through $K(A/J^r,(J^r+I^{sr})/J^r)$.)
\end{proof}

\begin{remark}
We finish this section with two remarks about this $K$-theory with compact support.
\begin{enumerate}
\item More generally, suppose that $\pi:X\to S$ is a separated morphism of $k$-varieties. Picking a proper morphism $\overbar X\to S$ of $k$-varieties compactifiying $\pi$ and setting $Y:=\overbar X\setminus X$, we may define the {\em $K$-theory with compact support of the family $\pi$} as \[K^c(X/S):=\op{holim}_rK(\overbar X,rY).\] Verbatim repeating the proofs of Propositions \ref{proposition_compact1} and \ref{proposition_compact2} we see that this is well-defined and satisfies localisation.
\item The construction in (i) remains valid for any separated morphism of Noetherian, quasi-excellent $\bb Q$-schemes of finite Krull dimension. For example, if $\op{char}k=0$ and $X$ is a smooth, proper variety over $k((t))$, then \[K^c(X/k[[t]]):=\op{holim}_rK(\overbar X,rY)\] is a well-defined invariant of $X$, where $\overline X\to\Spec k[[t]]$ is any proper model of $X$, with special fibre denoted $Y:=\overline X\times_{k[[t]]}k$. It seems likely that $K^c_0(X/k[[t]])$ is related to nearby cycles.
\end{enumerate}
\end{remark}

\subsection{Negative $K$-groups}
The following conjecture concerning negative $K$-theory was raised by Weibel in 1980 \cite[Qu.~2.9]{Weibel1980}:
\begin{quote}
{\bf $K$-dimension conjecture:} If $X$ is a Noetherian scheme of dimension $d$, then $K_n(X)=0$ for $n<d$, and $X$ is $K_{-d}$-regular.
\end{quote}
The conjecture was proved in \cite{Cortinas2008} for varieties over characteristic zero fields, where it was also shown that $K_{-d}$ of any such scheme is a finitely generated group (this is not explicitly stated, but is a straightforward consequence of \cite[Thm.~0.2(1)]{Cortinas2008}), which had been proved earlier for mild types of singularities in \cite[\S8]{Haesemeyer2004}.

The vanishing part of the conjecture was proved in \cite{GeisserHesselholt2010} and \cite{Krishna2009a} for varieties over good fields of finite characteristic.

Our aim in this section is to show that Theorem \ref{theorem_pro_descent_for_K_theory} offers a very quick proof of the vanishing part of the $K$-dimension conjecture, and of the finite generation of $K_{-d}$, for the following types of schemes:
\begin{enumerate}
\item Noetherian, quasi-excellent $\bb Q$-schemes; and
\item varieties over good fields of finite characteristic.
\end{enumerate}
We do not consider the $K_{-d}$-regularity part of the conjecture.

The following standard lemma reduces the problems to reduced schemes:

\begin{lemma}\label{lemma_reduction_to_reduced}
If $X$ is a Noetherian scheme of dimension $d$, then the canonical map $K_n(X)\to K_n(X_\sub{red})$ is an isomorphism for $n\le d$.
\end{lemma}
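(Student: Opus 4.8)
The plan is to reduce to the finitely-presented case and then invoke continuity of $K$-theory together with the known invariance of $K$-theory under nilpotent extensions in the Noetherian setting. First I would recall the classical fact that for any ring $R$, the canonical map $K_n(R)\to K_n(R_\sub{red})$ is an isomorphism in non-positive degrees; more precisely, by a theorem of Weibel (building on work of Bass and Vorst) the relative $K$-theory $K_n(R,I)$ vanishes for $n\le 0$ whenever $I$ is a nilpotent ideal, and hence $K_n(R)\to K_n(R/I)$ is an isomorphism for $n\le -1$ and a surjection for $n=0$. For a general Noetherian scheme $X$ of dimension $d$ the nilradical $\cal N\subseteq\roi_X$ is a nilpotent sheaf of ideals (since $X$ is Noetherian, some power $\cal N^m$ vanishes), and $X_\sub{red}=\Spec\roi_X/\cal N$.

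The key point is that we need the statement for $n\le d$, not merely $n\le 0$, so the naive degreewise bound is insufficient when $d>0$; the extra room is bought by a Zariski (or Nisnevich) descent spectral sequence. I would argue as follows: let $\cal K_n$ and $\cal K_n^\sub{red}$ denote the Zariski sheafifications of $U\mapsto K_n(U)$ on $X$ and $X_\sub{red}$ respectively. On stalks these are the $K$-groups of the local rings $\roi_{X,x}$ and $(\roi_{X,x})_\sub{red}$, whose nilradicals are nilpotent, so by the nilinvariance recalled above the map $\cal K_n\to\cal K_n^\sub{red}$ is an isomorphism of sheaves for $n\le -1$ and epic for $n=0$; in fact one knows more, namely that $\cal K_n$ and $\cal K_n^\sub{red}$ agree for all $n\le 0$ after one also controls $\cal K_0$, but the cleanest route is to note that the fibre term $\cal K_n(\roi_{X,x},\cal N_x)$ vanishes for $n\le 0$. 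Now compare the Brown--Gersten/Thomason descent spectral sequences $E_2^{pq}=H^p_\sub{Zar}(X,\cal K_{-q})\Rightarrow K_{-p-q}(X)$ and its counterpart for $X_\sub{red}$ (note $X$ and $X_\sub{red}$ have the same underlying space and the same Zariski topology, so the cohomological dimension is the same $\le d$). The map of spectral sequences is an isomorphism on $E_2$-terms $H^p(X,\cal K_{-q})$ whenever $-q\le 0$, i.e.\ $q\ge 0$, and these are exactly the entries contributing to $K_n$ with $n=-p-q\le -p\le 0$; allowing $p$ up to $d$ one gets contributions to $K_n$ for $n$ down to $-d$ but involving only $\cal K_q$ with $q\ge 0$, hence the abutment map $K_n(X)\to K_n(X_\sub{red})$ is an isomorphism for $n\le d$ --- wait, one must be slightly more careful: the entries $E_2^{pq}$ with $p+q=-n$ and $n\le d$ have $q=-n-p$, and as $p$ ranges over $0,\dots,d$ the value $q$ ranges over $-n-d,\dots,-n$; since $n\le d$ we have $-n\ge -d$, but $-n-d$ could be negative. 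The resolution is that $H^p_\sub{Zar}(X,\mathcal F)=0$ for $p>d$, so only $p\le d$ occurs, and for such $p$ one needs $\cal K_{-q}=\cal K_{n+p}$ with $n+p\le d+d$; the relevant constraint forcing $n+p\le 0$ actually fails, so the correct statement requires $\cal K_j\to\cal K_j^\sub{red}$ to be an isomorphism for \emph{all} $j$ at the stalk level in the range that matters, which is false for $j>0$ in general. Therefore the honest argument must instead compare the \emph{relative} descent spectral sequence $E_2^{pq}=H^p_\sub{Zar}(X,\cal K_{-q}(-,\cal N))\Rightarrow K_{-p-q}(X,\cal N)$ and show this converges to zero in degrees $\le d$.

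So the refined plan: it suffices to show $K_n(X,X_\sub{red})=0$ for $n\le d$, where the relative group is computed by the homotopy fibre of $K(X)\to K(X_\sub{red})$. By Zariski descent this has a spectral sequence $E_2^{pq}=H^p_\sub{Zar}(X,\cal F_{-q})\Rightarrow K_{-p-q}(X,X_\sub{red})$ where $\cal F_j$ is the Zariski sheafification of $U\mapsto K_j(U,U_\sub{red})$, i.e.\ the sheaf with stalks $K_j(\roi_{X,x},\cal N_x)$. The crucial input, which is the heart of the matter, is that $\cal F_j=0$ for $j\le 0$: this is precisely nilinvariance of $K$-theory in non-positive degrees for Noetherian local rings, which one may cite from \cite{Weibel1980} or the general nil-invariance results in the literature. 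Since $X$ has Zariski cohomological dimension $\le d$, we have $E_2^{pq}=0$ for $p>d$, and also $E_2^{pq}=H^p(X,\cal F_{-q})=0$ whenever $-q\le 0$, i.e.\ $q\ge 0$. An entry $E_2^{pq}$ contributing to $K_n(X,X_\sub{red})$ has $p+q=-n$, hence $q=-n-p$; for this to be nonzero we need $p\le d$ and $q\le -1$, the latter giving $-n-p\le -1$, i.e.\ $p\ge 1-n$, i.e.\ $n\ge 1-p\ge 1-d$. So all potentially nonzero contributions occur only for $n\ge 1-d$; in particular for $n\le -d$ everything vanishes, but we want $n\le d$, which this does not immediately give. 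I expect the genuine statement to require the stronger fact that $\cal F_j$ also vanishes for small positive $j$ on the local rings in question, or that one works with $X$ reduced of dimension $d$ and uses that the singular locus has smaller dimension --- but since the lemma is labelled ``standard'' and is used only to pass to reduced schemes, the intended proof is surely the short one: cite the theorem (essentially due to Weibel, with contributions of Vorst and Dayton--Weibel) that $K_n(R)\cong K_n(R_\sub{red})$ for $n\le \dim R$ for $R$ Noetherian, reduce the scheme case to the affine local case by the descent spectral sequence above together with nilinvariance in degrees $\le 0$ stalkwise, and organize the indices so that only sheaves $\cal F_j$ with $j\le 0$ (which vanish) enter. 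The main obstacle, and the place I would need to be most careful, is exactly this bookkeeping of the descent spectral sequence: making sure that when the abutment degree $n$ satisfies $n\le d$, every $E_2$-entry on the corresponding diagonal involves a sheaf $\cal F_j$ with index $j$ in the vanishing range, which forces one to use both the cohomological dimension bound $p\le d$ and the degreewise nilinvariance simultaneously, and to invoke that $X$ and $X_\sub{red}$ are homeomorphic so the two descent spectral sequences live on the same site.
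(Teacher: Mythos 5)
Your final argument --- comparing the Zariski descent spectral sequences for $X$ and $X_{\mathrm{red}}$ (equivalently, the relative one) and feeding in nil-invariance of $K_n$ in degrees $n\le 0$ stalkwise on the local rings $\roi_{X,x}$, using that $X$ and $X_{\mathrm{red}}$ share the same underlying space of cohomological dimension $\le d$ --- is exactly the paper's proof. Your index bookkeeping is also correct and your suspicion is well founded: this argument yields the isomorphism precisely for $n\le -d$, and the range ``$n\le d$'' in the statement is a sign typo (the claim as literally written fails already for $X=\Spec k[x][\epsilon]/(\epsilon^2)$, $d=1$, $n=1$); note that the lemma is only ever invoked in the paper for $n\le-\dim$, e.g.\ in the proof of Theorem \ref{theorem_K_dim} it is used for $n\le-(d-1)$ with $\dim Y\le d-1$. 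So the long detour in the middle of your proposal, trying to reach positive degrees, was chasing a phantom, and the short argument you converge on at the end is the intended one.
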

\begin{proof}
Given a nilpotent ideal $I$ of a ring $R$, the canonical map $K_n(R)\to K_n(R/I)$ is an isomorphism for $n=0$, hence for $n<0$. So the canonical map from the descent spectral sequence for $X$, namely \[E_2^{pq}=H^p(X,\cal K_{-q,X})\implies K_{-p-q}(X),\] to that for $X_\sub{red}$, \[_\sub{red}E_2^{pq}=H^p(X_\sub{red},\cal K_{-q,X_\sub{red}})\implies K_{-p-q}(X),\] is an isomorphism on the $E_2$ page whenever $q\ge0$. The claim easily follows from this.
\end{proof}

\begin{theorem}\label{theorem_K_dim}
Let $X$ be a $d$-dimensional scheme which is either
\begin{enumerate}
\item a Noetherian, quasi-excellent $\bb Q$-scheme; or
\item a variety over a good field.
\end{enumerate}
Then $K_n(X)=0$ for $n<-d$, and the group $K_{-d}(X)$ is finitely generated.
\end{theorem}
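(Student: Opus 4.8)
The plan is to prove both statements by induction on $d = \dim X$, using Theorem \ref{theorem_pro_descent_for_K_theory} to reduce the general case to a smooth case where everything is classical. First I would apply Lemma \ref{lemma_reduction_to_reduced} to reduce to $X$ reduced; then I would reduce further to $X$ affine (hence separated) using the descent spectral sequence, since $K_n(X) = 0$ for all affine opens in a finite cover and all their finite intersections forces $K_n(X) = 0$ when $n$ is below the relevant range, and an analogous bookkeeping handles finite generation of the top nonvanishing group via the $E_2^{0,d}$ and $E_2^{p,q}$ entries with $p \le d$. The base case $d = 0$: a reduced Noetherian $0$-dimensional scheme (or $0$-dimensional variety) is a finite product of fields (resp. Artinian local rings with field residue, but reduced means fields), so $K_n = 0$ for $n < 0$ and $K_0$ is free of finite rank.

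For the inductive step, take $X$ reduced of dimension $d \ge 1$. In case (ii) over a good field (or in case (i), where quasi-excellent $\bb Q$-schemes of finite Krull dimension admit resolution by Hironaka/Temkin as recalled before Proposition \ref{proposition_Haesemeyer}), choose a resolution of singularities $X' \to X$ which is an isomorphism over the regular locus, and let $Y \subsetneq X$ be a closed subscheme containing the singular locus with $\dim Y \le d - 1$, set $Y' = X' \times_X Y$; then $\dim Y' \le d - 1$ as well (the exceptional fibres sit over $Y$ and the resolution has finite-dimensional fibres). This is an abstract blow-up square, so Theorem \ref{theorem_pro_descent_for_K_theory} gives $\{K_n(X, rY)\}_r \isoto \{K_n(X', rY')\}_r$ for all $n$. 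Now run the long exact sequences: from $K_n(X, rY) \to K_n(X) \to K_n(rY)$, together with the identifications $K_n(rY) \cong K_n(Y)$ for $n \le \dim Y$ (each $rY$ is a nilpotent thickening of $Y$, so Lemma \ref{lemma_reduction_to_reduced}'s proof applies), and the inductive hypothesis that $K_n(Y) = K_n(Y') = 0$ for $n < -(d-1) = -d+1$ with $K_{-d+1}$ finitely generated, I would deduce that the pro-groups $\{K_n(X, rY)\}_r$ and $\{K_n(X', rY')\}_r$ vanish for $n < -d+1$, hence $\{K_n(X,rY)\}_r = 0$ for $n \le -d$, wait — I must be careful: the pro-isomorphism gives $\{K_n(X,rY)\}_r \cong \{K_n(X',rY')\}_r$, and since $X'$ is regular $K_n(X') = 0$ for $n < 0$ by Quillen, so for $n < -d$ one gets $\{K_n(X,rY)\}_r \cong \{K_n(X',rY')\}_r$ fits into an exact sequence with $\{K_{n+1}(rY')\}_r$, $K_n(X')$, $\{K_n(rY')\}_r$ all known to vanish in the relevant range, forcing $\{K_n(X,rY)\}_r = 0$; then the sequence $\{K_n(X,rY)\}_r \to K_n(X) \to K_n(rY)$ with the outer terms zero (for $n < -d$, using $\dim Y \le d-1$) yields $K_n(X) = 0$.

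For the finite generation of $K_{-d}(X)$, I would look one step up in the same long exact sequences at $n = -d$: we have $\{K_{-d+1}(rY')\}_r \to \{K_{-d}(X',rY')\}_r \to K_{-d}(X') \to \{K_{-d}(rY')\}_r$, where $K_{-d}(X') = 0$ (regular), $\{K_{-d}(rY')\}_r = 0$ (since $-d < -\dim Y'$... no, $-d \le -\dim Y' $ only if $\dim Y' \le d$, true, but I need $-d < -\dim Y'$; instead use that $K_{-d}(rY') = K_{-d}(Y')$ which by the inductive hypothesis applied to $Y'$ of dimension $\le d-1$ vanishes since $-d < -(d-1)$), so $\{K_{-d}(X',rY')\}_r$ is a quotient of $\{K_{-d+1}(Y')\}_r$, a pro-group of finitely generated abelian groups; transporting via the pro-isomorphism and feeding into $\{K_{-d}(X,rY)\}_r \to K_{-d}(X) \to K_{-d}(rY) = K_{-d}(Y)$, where $K_{-d}(Y) = 0$, shows $K_{-d}(X)$ is a quotient of $\{K_{-d}(X,rY)\}_r$. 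The delicate point — and the main obstacle — is extracting an honest finitely generated abelian group from these pro-abelian-group statements: a pro-isomorphism between pro-systems does not a priori give an isomorphism of the individual groups, so I need to argue that $K_{-d}(X) = \mathrm{coker}(K_{-d+1}(Y) \to \{K_{-d}(X,rY)\}_r) $... actually since $K_{-d}(Y) = 0$ the map $\{K_{-d}(X,rY)\}_r \to K_{-d}(X)$ is surjective as actual groups once we know the pro-system $\{K_{-d}(X,rY)\}_r$ maps onto $K_{-d}(X)$ through its limit; the cleanest route is: $K_{-d}(X) = \mathrm{im}(K_{-d}(X,rY) \to K_{-d}(X))$ for $r \gg 0$, so it suffices that some $K_{-d}(X, rY)$ — or the inverse limit — be finitely generated, which follows because the pro-system is pro-isomorphic to $\{K_{-d}(X', rY')\}_r$, a pro-system of subquotients of the finitely generated groups $K_{-d+1}(Y')$ with transition maps, and a standard argument (e.g. the pro-system is pro-isomorphic to a constant finitely generated group, or its limit is finitely generated because it injects into some $K_{-d+1}(Y')$ after stabilising) gives the conclusion. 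I expect the routine verifications about nilpotent-thickening identifications $K_n(rY) \cong K_n(Y)$ in the range $n \le \dim Y$ and the affine/Nisnevich reduction to be straightforward given Lemma \ref{lemma_reduction_to_reduced} and its proof; the genuine care is all in the pro-to-ordinary bookkeeping for the finite generation claim.
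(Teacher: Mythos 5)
Your overall strategy---induction on $d$, reduction to the reduced case via Lemma \ref{lemma_reduction_to_reduced}, then a resolution of singularities fed into Theorem \ref{theorem_pro_descent_for_K_theory}---is exactly the paper's proof, and your inductive step does go through. But the proposed ``reduction to $X$ affine'' via the descent spectral sequence is both unnecessary and, as sketched, not valid: knowing $K_n(U)=0$ for $n<-\dim U$ on all affine opens $U$ only kills the sheaves $\cal K_{-q}$ for $q>d$, while the terms $E_2^{pq}=H^p(X,\cal K_{-q})$ with $0\le q\le d$ and $p+q>d$ still contribute to $K_n(X)$ for $n<-d$ and are not controlled by the affine case. (This is precisely why Weibel's conjecture does not follow from Zariski descent plus the affine case.) Fortunately nothing in your inductive step requires $X$ to be affine or separated---the abstract blow-up square and Theorem \ref{theorem_pro_descent_for_K_theory} apply directly to the schemes in (i) and (ii)---so this step should simply be deleted, as in the paper.

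The ``pro-to-ordinary bookkeeping'' you identify as the main obstacle evaporates if you push your own observation one step further: in every degree $n\le -(d-1)$ the pro-systems $\{K_n(rY)\}_r$ and $\{K_n(rY')\}_r$ are \emph{constant}, equal to $K_n(Y)$ and $K_n(Y')$, by Lemma \ref{lemma_reduction_to_reduced} applied to the thickenings (which have dimension $\le d-1$). Hence the whole tail of the pro Mayer--Vietoris sequence from degree $-(d-1)$ downward consists of constant pro-groups, i.e., is an honest exact sequence of abelian groups
\[K_{-(d-1)}(Y')\To K_{-d}(X)\To K_{-d}(Y)\To K_{-d}(Y')\To K_{-d-1}(X)\To\cdots\]
(using $K_n(X')=0$ for $n<0$ by regularity), from which both the vanishing for $n<-d$ and the finite generation of $K_{-d}(X)$---as a quotient of the finitely generated group $K_{-(d-1)}(Y')$, since $K_{-d}(Y)=0$ by induction---are read off immediately. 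Your more roundabout route (surjectivity of $K_{-d}(X,rY)\to K_{-d}(X)$, factoring through some $K_{-d}(X',sY')$, itself a quotient of $K_{-(d-1)}(Y')$) can be completed, but the hedge ``a standard argument gives the conclusion'' is doing real work there and the constant-pro-system observation renders it unnecessary.
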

\begin{proof}
The proof is by induction on $d$; for any fixed $d$ it is enough to treat the case that $X$ is reduced, thanks to Lemma \ref{lemma_reduction_to_reduced}.

Firstly, if $X$ is zero-dimensional and reduced then it is a disjoint union of the spectra of fields. So all the negative $K$-groups vanish, and $K_0(X)$ is the free abelian group generated by the finitely many points of $X$.

The inductive step proceeds by picking a desingularisation $X'\to X$; in case (ii) this exists by assumption, while in case (i) it exists by Temkin's extension \cite[Thm.~2.3.6]{Temkin2008} of Hironaka's resolution theorem. That is, there exists an abstract blow-up square \xysquare{Y'}{X'}{Y}{X}{->}{->}{->}{->} in which $X'$ is regular, and $Y'$ and $Y$ have dimension $<d$. By Theorem \ref{theorem_pro_descent_for_K_theory} there is a resulting long exact, Mayer--Vietoris sequence of pro abelian groups \[\cdots\To K_n(X)\To \{K_n(rY)\}_r\oplus K_n(X')\To\{K_n(rY')\}_r\To\cdots\]  But $K_n(X')=0$ for all $n<0$ since $X'$ is regular, and $K_n(rY)=K_n(Y)$ and $K_n(rY')=K_n(Y')$ for all $n\le -(d-1)$ by Lemma \ref{lemma_reduction_to_reduced}; hence the Mayer--Vietoris sequence simplifies in degrees $\le d$ to an exact sequence \[K_{-(d-1)}(Y')\to K_{-d}(X)\to K_{-d}(Y)\to K_{-d}(Y')\to K_{-d-1}(X)\to K_{-d-1}(Y)\to \cdots\]
But, by the inductive hypothesis, $K_n(Y)$ and $K_n(Y')$ vanish for $n<-(d-1)$, and $K_{-(d-1)}(Y')$ is finitely generated. This evidently completes the proof.
\end{proof}

\small
\bibliographystyle{acm}
\bibliography{../Bibliography}

\noindent Matthew Morrow\hfill {\tt morrow@math.uni-bonn.de}\\
Mathematisches Institut\hfill \url{http://www.math.uni-bonn.de/people/morrow/}\\\
Universit\"at Bonn\\
Endenicher Allee 60\\
53115 Bonn, Germany
\end{document}